\documentclass{cimart}
\newcommand{\hiddenfootnote}[1]{
  \begingroup
  \renewcommand{\thefootnote}{}
  \footnotetext{#1}
  \addtocounter{footnote}{-1}
  \endgroup
}

\title{
    Transposed Poisson structures on solvable Lie algebras with filiform nilradical
    }

\author{
    Kobiljon Abdurasulov, Jobir Adashev and Sabohat Eshmeteva
    }

\authorinfo[
    Kobiljon Abdurasulov]{
    CMA-UBI, University of Beira Interior, Covilh\~{a}, Portugal; \ Institute of Mathematics Academy of
Sciences of Uzbekistan, Uzbekistan}{%
    abdurasulov0505@mail.ru
    }

\authorinfo[
    Jobir Adashev]{
    Institute of Mathematics Academy of
Sciences of Uzbekistan and State Pedagogical Institute
of Tashkent Region, Uzbekistan}{%
    adashevjq@mail.ru
    }

\authorinfo[
    Sabohat Eshmeteva]{
    State Pedagogical Institute
of Tashkent Region, Uzbekistan}{%
    sabohateshmetova898@gmail.com
    }

\abstract{%
    In this article, we describe $\frac{1}{2}$-derivations of solvable Lie algebras with a filiform nilradical. Nontrivial transposed Poisson algebras with solvable Lie algebras are constructed. That is, by using $\frac{1}{2}$-derivations of Lie algebras, we have established commutative associative multiplication to construct a transposed Poisson algebra with an associated given Lie algebra.
    }

\keywords{
    Lie algebra, transposed Poisson algebra, $\frac12$-derivation.
    }

\msc{
    17A30 (primary); 17B40, 17B61, 17B63 (secondary).
    }

\VOLUME{32}
\YEAR{2024}
\ISSUE{3}
\firstpage{441}
\DOI{https://doi.org/10.46298/cm.12844}
\begin{document}

\section*{Introduction}

\hiddenfootnote{This work was supported by UIDB/MAT/00212/2020, UIDP/MAT/00212/2020, 2022.02474.PTDC and by grant F-FA-2021-423, Ministry of Higher Education, Science and Innovations of the Republic of Uzbekistan.}

C. Bai, R.Bai, L.Guo, and Y.Wu~\cite{Bai} have introduced a dual notion of the Poisson algebra, called \textit{transposed Poisson algebra}, by exchanging the roles of the two multiplications in the Leibniz rule defining a Poisson algebra.
We know that Poisson algebra is introduced to commutative associative algebras using its derivation. Similarly, the concept of a transposed Poisson algebra is defined on a Lie algebra through its a $\frac{1}{2}$-derivation.
A transposed Poisson algebra defined this way not only shares some properties of a Poisson algebra, such as the closedness under tensor products and the Koszul self-duality as an operad but also admits a rich class of identities \cite{kms,Bai,fer23,bfk22,lb23,bl23,conj}.

One of the natural tasks in the theory of Poisson algebras is the description of all such algebras with a fixed Lie or associative part~\cite{YYZ07,jawo,kk21}. This paper classifies transposed Poisson algebras based on the solvable Lie algebras with a filiform nilradical. Note that any unital transposed Poisson algebra is a particular case of a ``contact bracket'' algebra	and a quasi-Poisson algebra \cite{bfk22}. Each transposed Poisson algebra is a commutative  Gelfand-Dorfman algebra \cite{kms} and it is also an algebra of Jordan brackets \cite{fer23}. In \cite{ZZ25} computed $\delta$-derivations of simple Jordan algebras with values in irreducible bimodules. They turn out to be either ordinary derivations $(\delta = 1)$, or scalar multiples of the identity map $(\delta =\frac{1}{2}).$ This can be considered as a generalisation of the “First Whitehead Lemma” for Jordan algebras which claims that all such ordinary derivations are inner.  In a paper by Ferreira, Kaygorodov and  Lopatkin, a relation between $\frac{1}{2}$-derivations of Lie algebras and transposed Poisson algebras has been established \cite{FKL}. These ideas were used to describe all transposed Poisson structures on  Witt and Virasoro algebras in  \cite{FKL}; on twisted Heisenberg-Virasoro,   Schr\"odinger-Virasoro and extended Schr\"odinger-Virasoro algebras in \cite{yh21}; on Schr\"odinger algebra in $(n+1)$-dimensional space-time in \cite{ytk}; on Witt type Lie algebras in \cite{kk23}; on generalized Witt algebras in \cite{kkg23}; Block Lie algebras in \cite{kk22,kkg23}; on the Lie algebra of upper triangular matrices in \cite{KK7} and showed that there are more transposed Poisson structures on the Lie algebra of upper triangular matrices;  on Lie incidence algebras in \cite{kkinc}. Any complex finite-dimensional solvable Lie algebra was proved to admit a non-trivial transposed Poisson structure \cite{klv22}. The algebraic and geometric classification of three-dimensional transposed Poisson algebras was given in \cite{bfk23}. For the current list of open questions on transposed Poisson algebras, see \cite{bfk22}. Recently, in \cite{KKh}, it was described all transposed Poisson algebra structures on oscillator Lie algebras, i.e., on one-dimensional solvable extensions of the $(2n+1)$-dimensional Heisenberg algebra; on solvable Lie algebras with naturally graded filiform nilradical; on $(n+1)$-dimensional solvable extensions of the $(2n + 1)$-dimensional Heisenberg algebra; and on $n$-dimensional solvable extensions of the n-dimensional algebra with the trivial multiplication. Furthermore, the authors found an example of a finite-dimensional Lie algebra with non-trivial $\frac{1}{2}$-derivations but without non-trivial transposed Poisson algebra structures.	
Also, see \cite[Section 7.3]{k23} and the references therein for similar studies.
In \cite{bfk23}, it was obtained the algebraic and geometric classification of all complex $3$-dimensional transposed Poisson algebras, and in \cite{ch24}
the algebraic classification of all complex $3$-dimensional transposed Poisson $3$-Lie algebras.

The purpose of this article is to find all transposed Poisson algebras that demonstrate solvable Lie algebra with filiform nilradical. To achieve our goal, we have organized the paper as follows: in Section 2, we described $\frac{1}{2}$-derivations of solvable Lie algebras with a filiform nilradical. In Section 3, we describe all non-trivial transposed Poisson algebras with solvable Lie algebras. Next, using descriptions of $\frac{1}{2}$-derivations of Lie algebras, we established commutative associative multiplication to construct a transposed Poisson algebra with an associated given Lie algebra.

\bigskip

\section{Preliminaries}

In this section, we present the concepts and known results.
All the algebras we present in this section are given over the field $\mathbb{C}$ unless otherwise stated.

We first recall the definition of a Poisson algebra.

\begin{definition}
Let $\mathfrak{L}$ be a vector space equipped with two bilinear operations
$$
\cdot,\; [-,-] :\mathfrak{L}\otimes \mathfrak{L}\to \mathfrak{L}.$$
The triple $(\mathfrak{L},\cdot,[-,-])$ is called a
\textbf{Poisson algebra} if $(\mathfrak{L},\cdot)$ is a commutative associative algebra and
$(\mathfrak{L},[-,-])$ is a Lie algebra which satisfies the compatibility condition
\begin{equation}\label{eq:LR}
[x,y\cdot z]=[x,y]\cdot z+y\cdot [x,z].
\end{equation}
\end{definition}

Eq.~(\ref{eq:LR}) is called the {\bf Leibniz
rule} since the adjoint operators of the Lie algebra are
derivations of the commutative associative algebra.

\begin{definition}\label{TPA}
Let $\mathfrak{L}$ be a vector space equipped with two bilinear operations
$$
\cdot,\; [-,-] :\mathfrak{L}\otimes \mathfrak{L}\to \mathfrak{L}.$$
The triple $(\mathfrak{L},\cdot,[-,-])$ is called a \textbf{transposed Poisson algebra} if $(\mathfrak{L},\cdot)$ is a commutative associative algebra and $(\mathfrak{L},[-,-])$ is a Lie algebra which satisfies the following compatibility condition
\begin{equation}
2z\cdot [x,y]=[z\cdot x,y]+[x,z\cdot y].\label{eq:dualp}
\end{equation}
\end{definition}

Eq.~\eqref{eq:dualp} is called the {\bf transposed
Leibniz rule} because the roles played by the two binary operations in the Leibniz rule in a Poisson algebra are switched. Further, the resulting operation is rescaled by introducing a factor 2 on the left-hand side.

Transposed Poisson algebras were first introduced in a paper by Bai, Bai, Guo and Wu \cite{Bai}. A transposed Poisson structure $\cdot$ on $\mathfrak{L}$ is called \textit{trivial}, if $x\cdot y=0$ for all $x,y\in\mathfrak{L}$.

The next result shows that the
compatibility relations of the Poisson algebra and those of the Transposed Poisson algebra are independent in the following sense.

\begin{proposition} [\cite{Bai}] Let $(\mathfrak{L},\cdot)$ be a commutative associative algebra and $(\mathfrak{L},[-,-])$ be a Lie algebra. Then $(\mathfrak{L},\cdot,[-,-])$ is both a
Poisson algebra and a transposed Poisson algebra if and only
if
\begin{equation}
x\cdot [y,z]=[x\cdot y,z]=0.
\label{eq:inter0}
\end{equation}
\end{proposition}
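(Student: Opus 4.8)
The plan is to prove the two implications separately, with the forward (``only if'') direction carrying essentially all of the work. For the easy (``if'') direction, I would assume \eqref{eq:inter0}, i.e. $x\cdot[y,z]=0$ and $[x\cdot y,z]=0$ for all $x,y,z\in\mathfrak{L}$, and simply observe that then both sides of the Leibniz rule \eqref{eq:LR} and of the transposed Leibniz rule \eqref{eq:dualp} vanish identically. Indeed, in \eqref{eq:LR} the left-hand term $[x,y\cdot z]$ is zero by the second identity of \eqref{eq:inter0} (after using antisymmetry of the bracket), while each summand on the right has the form $\bullet\cdot[\bullet,\bullet]$ and is zero by the first identity (after using commutativity of $\cdot$). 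The same bookkeeping annihilates every term of \eqref{eq:dualp}. This direction uses only commutativity of $\cdot$ and antisymmetry of $[-,-]$.

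For the (``only if'') direction, assume $(\mathfrak{L},\cdot,[-,-])$ satisfies both \eqref{eq:LR} and \eqref{eq:dualp}. The key idea is to feed the Leibniz rule into the transposed Leibniz rule. I would rewrite the right-hand side of \eqref{eq:dualp} by first applying antisymmetry, $[z\cdot x,y]=-[y,z\cdot x]$, and then expanding both $[y,z\cdot x]$ and $[x,z\cdot y]$ via \eqref{eq:LR}. Collecting terms, the two copies of $z\cdot[x,y]$ generated this way combine to exactly cancel the $2z\cdot[x,y]$ on the left-hand side of \eqref{eq:dualp}, leaving the single clean identity
\begin{equation}
x\cdot[y,z]=y\cdot[x,z]\qquad\text{for all }x,y,z\in\mathfrak{L}.\label{eq:symfirst}
\end{equation}

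Upgrading \eqref{eq:symfirst} to the full vanishing \eqref{eq:inter0} is the step I expect to be the main obstacle, since \eqref{eq:symfirst} alone is insufficient without simultaneously exploiting the antisymmetry of the bracket. Writing $f(x,y,z):=x\cdot[y,z]$, identity \eqref{eq:symfirst} says that $f$ is symmetric in its first two slots, while antisymmetry of $[-,-]$ says that $f$ is antisymmetric in its last two slots. Alternating these two symmetries around the three arguments yields a chain of equalities that carries $f(x,y,z)$ back to $-f(x,y,z)$; hence $2f(x,y,z)=0$, and since we work over $\mathbb{C}$ we conclude $x\cdot[y,z]=0$. Finally, substituting $x\cdot[y,z]=0$ back into \eqref{eq:LR} forces its right-hand side to vanish term by term, so $[x,y\cdot z]=0$, which after antisymmetry and relabelling is exactly $[x\cdot y,z]=0$. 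Together these give \eqref{eq:inter0} and complete the proof.
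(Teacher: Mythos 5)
Your proof is correct. Note that the paper under review does not actually prove this proposition --- it is stated with a citation to \cite{Bai} and no argument is given --- so there is no in-paper proof to compare against; your route (expanding the transposed Leibniz rule \eqref{eq:dualp} via the Leibniz rule \eqref{eq:LR} so that the two copies of $z\cdot[x,y]$ cancel, obtaining $x\cdot[y,z]=y\cdot[x,z]$, and then observing that a trilinear map symmetric in its first two arguments and antisymmetric in its last two must vanish over $\mathbb{C}$) is precisely the standard argument from the original reference, and all steps, including the easy converse, check out.
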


\begin{definition}\label{halfderiv}
  Let $(\mathfrak{L}, [-,-])$ be an algebra with a multiplication $[-,-],$ and $\varphi$ be
a bilinear map. Then $\varphi$ is a $\frac12$-\textbf{derivation} if it satisfies:
\begin{equation}\label{halfderiv1}
\varphi([x, y]) = \frac12([\varphi(x), y] + [x, \varphi(y)]).
\end{equation}
\end{definition}

Observe that $\frac{1}{2}$-derivations are a particular case of $\delta$-derivations introduced by Filippov in \cite{fil1}
(see, \cite{k07,k10} and references therein).  It is easy to see from Definition \ref{halfderiv} that $[\mathfrak{L},\mathfrak{L}]$ and $Ann(\mathfrak{L})$ are invariant under any $\frac 12$-derivation of $\mathfrak{L}$. Definitions \ref{TPA} and \ref{halfderiv} immediately implies the following key Lemma.

\begin{lemma}\label{lemma1}
  Let $(\mathfrak{L}, [-,-])$ be a Lie algebra and $\cdot$ a new binary (bilinear) operation on $\mathfrak{L}$. Then
$(\mathfrak{L}, \cdot, [-,-])$ is a transposed Poisson algebra if and only if $\cdot$ is commutative and associative and for
every $z \in \mathfrak{L}$ the multiplication by $z$ in $(\mathfrak{L}, \cdot)$ is a $\frac12$-derivation of $(\mathfrak{L}, [-,-])$.
\end{lemma}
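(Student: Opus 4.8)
The plan is to observe that the compatibility condition \eqref{eq:dualp} defining a transposed Poisson algebra is, for each fixed $z$, nothing other than the $\frac12$-derivation identity \eqref{halfderiv1} applied to the operator ``multiplication by $z$'', with the factor $2$ in \eqref{eq:dualp} exactly absorbing the factor $\frac12$ in \eqref{halfderiv1}. Thus the argument should reduce to unwinding the two definitions and matching the identities term by term; both implications are essentially the same algebraic statement read in opposite directions.

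Concretely, for a fixed element $z \in \mathfrak{L}$ I would introduce the map $\varphi_z \colon \mathfrak{L} \to \mathfrak{L}$, $\varphi_z(x) = z \cdot x$, which is linear because $\cdot$ is bilinear. For the forward direction, assuming $(\mathfrak{L}, \cdot, [-,-])$ is a transposed Poisson algebra, the commutativity and associativity of $\cdot$ hold by definition, and dividing \eqref{eq:dualp} by $2$ gives
\[
z\cdot[x,y] = \frac12\left([z\cdot x, y] + [x, z\cdot y]\right),
\]
which is precisely \eqref{halfderiv1} for $\varphi = \varphi_z$. Since $z$ was arbitrary, every $\varphi_z$ is a $\frac12$-derivation, as required.

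For the converse, assuming $\cdot$ is commutative and associative and that every $\varphi_z$ is a $\frac12$-derivation, I would multiply the identity \eqref{halfderiv1} for $\varphi_z$ by $2$ to recover $2z\cdot[x,y] = [z\cdot x, y] + [x, z\cdot y]$, valid for all $x,y$ and the given $z$; letting $z$ range over $\mathfrak{L}$ yields the compatibility condition \eqref{eq:dualp} in full. Together with the hypotheses that $(\mathfrak{L},\cdot)$ is commutative associative and $(\mathfrak{L},[-,-])$ is a Lie algebra, this shows $(\mathfrak{L}, \cdot, [-,-])$ is a transposed Poisson algebra.

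There is no genuine computational obstacle here: the statement is a tautological reformulation, and the only point requiring care is the quantifier. The transposed Leibniz rule must hold for \emph{every} $z \in \mathfrak{L}$, which corresponds exactly to requiring that multiplication by \emph{each} $z$ be a $\frac12$-derivation, not merely some distinguished $z$. Keeping track of this universal quantifier on both sides is the one spot where a careless argument could slip.
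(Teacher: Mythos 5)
Your proof is correct and is exactly the argument the paper intends: the paper states this lemma as an immediate consequence of Definitions \ref{TPA} and \ref{halfderiv}, and your unwinding of the two identities (with the factor $2$ absorbing the $\frac12$, and the universal quantifier over $z$ handled correctly) is precisely that definition-matching argument made explicit.
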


%
%

\begin{definition}
 An $n$-dimensional Lie algebra $\mathfrak{L}$ is said to be \textbf{filiform} if $\dim \mathfrak{L}^i=n-i$, for $2\leq i\leq n$.
\end{definition}

Now let us define a natural gradation for the nilpotent Lie algebras.

\begin{definition} Given a nilpotent Lie algebra $\mathfrak{L}$ with nilindex $s$, put $\mathfrak{L}_i=\mathfrak{L}^i/\mathfrak{L}^{i+1}, \ 1 \leq i\leq s-1$, and $Gr(\mathfrak{L}) = \mathfrak{L}_1 \oplus
\mathfrak{L}_2\oplus\ldots \oplus \mathfrak{L}_{s-1}$.
Define the product in the vector space $Gr(\mathfrak{L})$ as follows:
$$[x + \mathfrak{L}^{i+1}, y + \mathfrak{L}^{j+1}]: = [x, y] + \mathfrak{L}^{i+j+1},$$
where $x \in \mathfrak{L}^{i} \setminus \mathfrak{L}^{i+1},$ $y \in \mathfrak{L}^{j} \setminus \mathfrak{L}^{j+1}.$ Then $[\mathfrak{L}_i,\mathfrak{L}_j]\subseteq \mathfrak{L}_{i+j}$ and we obtain the graded algebra $Gr(\mathfrak{L})$. If $Gr(\mathfrak{L})$ and $\mathfrak{L}$ are isomorphic, then we say that the algebra $\mathfrak{L}$ is \textbf{naturally graded}.
\end{definition}

It is well known that there are two types of naturally graded filiform Lie algebras. In fact, the second type will appear only in the case when the dimension of the algebra is even.

\begin{theorem} [\cite{Ver}] Any naturally graded filiform Lie algebra is isomorphic to one of the following non isomorphic algebras:
\begin{center}
$\begin{array}{ll}
\mathfrak{n}_{n,1}:&[e_i, e_1]=e_{i+1}, \quad 2\leq i \leq n-1;\\[1mm]
\mathfrak{Q}_{2n}:& [e_i, e_1] = e_{i+1},\ 2\leq i \leq 2n-2,\quad [e_i, e_{2n+1-i}]=(-1)^i e_{2n},\ 2\leq i \leq n.
\end{array}$
\end{center}
\end{theorem}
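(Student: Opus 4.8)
The plan is to reduce at once to the case where $\mathfrak{L}$ itself is naturally graded, so that $\mathfrak{L} = \mathfrak{L}_1 \oplus \mathfrak{L}_2 \oplus \cdots \oplus \mathfrak{L}_{n-1}$ with $[\mathfrak{L}_i, \mathfrak{L}_j] \subseteq \mathfrak{L}_{i+j}$. The filiform condition $\dim \mathfrak{L}^i = n-i$ translates into $\dim \mathfrak{L}_1 = 2$ and $\dim \mathfrak{L}_i = 1$ for $2 \le i \le n-1$. Since a graded nilpotent algebra is generated in degree one, each $\mathfrak{L}_{i+1}$ must be reached by $[\mathfrak{L}_1, \mathfrak{L}_i]$; in particular $[\mathfrak{L}_1, \mathfrak{L}_1] = \mathfrak{L}_2 \neq 0$, and more generally $[\mathfrak{L}_1,\mathfrak{L}_i]=\mathfrak{L}_{i+1}$ for $2\le i\le n-2$.

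Next I would build an adapted basis. For each level $i$ the linear map $x \mapsto \mathrm{ad}_x|_{\mathfrak{L}_i}$ from the two-dimensional space $\mathfrak{L}_1$ to $\mathrm{Hom}(\mathfrak{L}_i,\mathfrak{L}_{i+1})$ is nonzero (as $[\mathfrak{L}_1,\mathfrak{L}_i]=\mathfrak{L}_{i+1}$), so its kernel is a single line in $\mathfrak{L}_1$. Over $\mathbb{C}$ I can choose one $e_1 \in \mathfrak{L}_1$ avoiding all finitely many of these lines, so that $\mathrm{ad}_{e_1}$ maps each $\mathfrak{L}_i$ ($1 \le i \le n-2$) onto $\mathfrak{L}_{i+1}$. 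Picking $e_2 \in \mathfrak{L}_1$ with $[e_2,e_1]\neq 0$ and setting $e_{i+1} := [e_i, e_1]$ for $2 \le i \le n-1$ yields a basis $e_1,\dots,e_n$ with $[e_i,e_1]=e_{i+1}$ for $2 \le i \le n-1$ and $[e_n,e_1]=0$, which is exactly the ``chain'' common to both $\mathfrak{n}_{n,1}$ and $\mathfrak{Q}_{2n}$.

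It then remains to determine the brackets $[e_i,e_j]$ with $2 \le i < j$. Here $e_1 \in \mathfrak{L}_1$ and $e_k \in \mathfrak{L}_{k-1}$ for $k \ge 2$, so by the grading $[e_i,e_j] \in \mathfrak{L}_{i+j-2} = \langle e_{i+j-1}\rangle$, whence $[e_i,e_j] = c_{ij}\,e_{i+j-1}$ with the convention $e_m = 0$ for $m > n$, and $c_{ij} = -c_{ji}$. Feeding the triples $(e_1,e_i,e_j)$ into the Jacobi identity gives the recursion $c_{i+1,j} = c_{ij} - c_{i,j+1}$. I would solve this together with antisymmetry and the boundary condition $c_{ij}=0$ for $i+j-1>n$: the recursion expresses every intermediate constant through the ``top'' ones (the pairs with $i+j=n+1$, landing in $e_n$), and a triangular change of basis $e_{i+1} \mapsto e_{i+1} + (\text{lower-order corrections})$ absorbs all intermediate constants, leaving only brackets into $e_n$.

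The main obstacle is this final normalization together with the even/odd dichotomy. Antisymmetry forces $c_{ij}=0$ whenever $i=j$; when $n$ is odd the top pairs $i+j=n+1$ contain a diagonal pair $i=j=(n+1)/2$, and the recursion propagates this vanishing to kill all remaining constants, leaving only $\mathfrak{n}_{n,1}$. When $n$ is even no diagonal pair occurs, and the recursion (after rescaling $e_n$) pins the surviving constants to the alternating pattern $c_{i,\,n+1-i}=(-1)^i$, which is precisely $\mathfrak{Q}_{2n}$ on writing $n=2m$; this is why the second type exists only in even dimension. A short separate check then confirms the two families are non-isomorphic: the derived subalgebra $[\mathfrak{L},\mathfrak{L}]$ is abelian for $\mathfrak{n}_{n,1}$ (all $c_{ij}=0$ for $i,j\ge 2$) but not for $\mathfrak{Q}_{2n}$, where brackets such as $[e_3,e_{2n-2}]$ are nonzero.
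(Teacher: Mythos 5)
The paper itself offers no proof of this statement (it is quoted from Vergne's classification \cite{Ver}), so your proposal stands or falls on its own merits. Its first half is sound and is indeed the standard route: the filiform condition forces $\dim\mathfrak{L}_1=2$ and $\dim\mathfrak{L}_i=1$ for $i\geq 2$; a generic $e_1\in\mathfrak{L}_1$ (avoiding finitely many lines) has $\mathrm{ad}_{e_1}\colon\mathfrak{L}_i\to\mathfrak{L}_{i+1}$ surjective for all $i$; the grading forces $[e_i,e_j]=c_{ij}e_{i+j-1}$; and the Jacobi identity with $e_1$ gives the recursion $c_{i+1,j}=c_{ij}-c_{i,j+1}$, so that all constants are determined by the top ones $t_i:=c_{i,n+1-i}$.

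The gap is the final step, which is precisely the heart of the theorem. It is \emph{false} that antisymmetry plus the recursion kill all constants for odd $n$, or pin them to the alternating pattern for even $n$. From the recursion one gets $c_{ij}=\sum_{s=0}^{d}\binom{d}{s}t_{i+s}$ with $d=n+1-i-j$, and the symmetry $\binom{d}{s}=\binom{d}{d-s}$ shows that every antisymmetry condition on the $c_{ij}$ reduces to the top conditions $t_i=-t_{n+1-i}$ (plus $t_{(n+1)/2}=0$ for odd $n$); nothing more is forced. Concretely, for $n=5$ the brackets $[e_i,e_1]=e_{i+1}$ together with $[e_2,e_3]=t_2e_4$, $[e_2,e_4]=t_2e_5$ (i.e.\ $t_2$ arbitrary, $t_3=0$, $t_4=-t_2$) satisfy all your constraints and all Jacobi identities, giving a naturally graded filiform algebra with non-vanishing constants in odd dimension; for $n=6$ the solution $(t_2,t_3)=(1,0)$, i.e.\ $[e_2,e_i]=e_{i+1}$ for $i=3,4,5$, satisfies everything but is not alternating. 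Both are in fact isomorphic to $\mathfrak{n}_{n,1}$ --- but only via the substitution $e_2\mapsto e_2+\lambda e_1$, a normalization your argument never performs (your ``triangular change of basis'' of the $e_{i+1}$, $i\geq2$, is not graded, does not touch the top constants, and is not needed for the intermediate ones, which the recursion already determines). What is missing is the orbit analysis under the residual freedom in the degree-one generators (replacing $e_2$ by $e_2+\lambda e_1$, rescaling, possibly exchanging the roles of $e_1$ and $e_2$): one must show that every solution of the linear system is equivalent to $0$, or --- only when $n$ is even --- to the alternating pattern. As written, you have only verified that $\mathfrak{n}_{n,1}$ and $\mathfrak{Q}_{2n}$ satisfy your constraints, not that every naturally graded filiform Lie algebra is isomorphic to one of them.
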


All solvable Lie algebras whose nilradical is the naturally graded filiform Lie algebra $\mathfrak{n}_{n,1}$ are classified in \cite{SnWi} ($n\geq4$). Furthermore,  solvable Lie algebras whose nilradical is the naturally graded filiform Lie algebra $\mathfrak{Q}_{2n}$ are classified in \cite{AnCaGa3}.
It is proved that the dimension of a solvable Lie algebra whose nilradical is isomorphic to an $n$-dimensional naturally graded filiform Lie algebra is not greater than $n+2$.

Here we give the list of such solvable Lie algebras. We denote by $\mathfrak{s}^{i}_{n,1}$
solvable Lie algebras with nilradical $\mathfrak{n}_{n,1}$ and codimension one, and by $\mathfrak{s}_{n,2}$ with codimension two:
\[\begin{array}{ll}
\mathfrak{s}^{1}_{n,1}(\beta):& [e_i, e_1] = e_{i+1}, \  2 \leq i \leq n-1, \  [e_1,x]=e_1, \  [e_i, x]  =(i-2+\beta) e_i, \  2 \leq i \leq n; \\[2mm]
\mathfrak{s}^{2}_{n,1}:& [e_i, e_1] = e_{i+1}, \ 2 \leq i \leq n-1, \  [e_i, x]  = e_i,  \ 2 \leq i \leq n; \\[1mm]
\mathfrak{s}^{3}_{n,1}:& [e_i, e_1] = e_{i+1}, \  2 \leq i \leq n-1, \  [e_1,x]=e_1 +e_2, \  [e_i, x]  = (i-1)e_i,  \ 2 \leq i \leq n;
\end{array}\]
\begin{center}
\[\mathfrak{s}^{4}_{n,1}(\alpha_3, \alpha_4, \ldots, \alpha_{n-1}) : [e_i,e_1]=e_{i+1}, \ 2 \leq i \leq n-1,\quad
[e_i,x]=e_i+\sum\limits_{l=i+2}^{n} \alpha_{l+1-i} e_l,  \ 2\leq i\leq n;\]
\end{center}
\[\quad \quad \mathfrak{s}_{n,2}\ :\begin{cases}
[e_i, e_1] = e_{i+1}, \  2 \leq i \leq n-1, \quad [e_1, x_1] = e_1,  \\[1mm]
[e_i, x_1]  = (i-2)e_i,  \ 3 \leq i \leq n, \quad [e_i, x_2] = e_i, &  2 \leq i \leq n.
\end{cases}\]

Any solvable complex Lie algebra of dimension $2n+1$ with nilradical isomorphic to $\mathfrak{Q}_{2n}$ is isomorphic to one of the following algebras:
$$\mathfrak{r}_{2n+1}(\lambda)  :\begin{cases}
[e_{i},e_{1}]=e_{i+1}, \ 2\leq i\leq2n-2, \quad
[e_{i},e_{2n+1-i}]  =(-1)^{i}e_{2n}, \ 2\leq i\leq n, \\[1mm]
[e_{1},x]=e_{1}, \quad [e_{i},x]=(i-2+\lambda)e_{i}, \ 2\leq i\leq 2n-1, \\
[e_{2n},x]=(2n-3+2\lambda)e_{2n};
\end{cases}$$
\begin{center}
$$\mathfrak{r}_{2n+1}(2-n,\varepsilon) :\begin{cases}
[e_{i},e_{1}]=e_{i+1}, \ 2\leq i\leq 2n-2, \quad [e_{i},e_{2n+1-i}]  =(-1)^{i}e_{2n}, & 2\leq i\leq n, \\[1mm]
[e_{1},x]=e_{1}+\varepsilon e_{2n}, \ \varepsilon=-1,1, \quad [e_{i},x]=(i-n)e_{i}, & 2\leq i\leq 2n-1,\\[1mm]
[e_{2n},x]=e_{2n}; \end{cases}
$$
\end{center}
\begin{center}
$$\mathfrak{r}_{2n+1}(\lambda_{5},\ldots,\lambda_{2n-1})  :\begin{cases}
[e_{i},e_{1}]=e_{i+1}, \ 2\leq i\leq2n-2, \quad [e_{i},e_{2n+1-i}]  =(-1)^{i}e_{2n}, & 2\leq i\leq n, \\[1mm]
[e_{2+i},x]=e_{2+i}+\sum\limits_{k=2}^{\left[\frac{2n-2-i}{2}\right]}\lambda_{2k+1}e_{2k+1+i}, \ 0\leq i\leq2n-6,\\[1mm]
[e_{2n-i},x]=e_{2n-i}, \ i=1,2,3,\quad [e_{2n},x]=2e_{2n}. &
\end{cases}$$
\end{center}
Moreover, the first nonvanishing parameter $\lambda_{2k+1}$ can be
normalized to $1$.

Finally, for any $n\geq3$ there is only one solvable Lie algebra $\mathfrak{r}_{2n+2}$ of
dimension $2n+2$ having a nilradical isomorphic to $\mathfrak{Q}_{2n}:$
$$\mathfrak{r}_{2n+2}:\begin{cases}
[e_{i},e_1]  =e_{i+1}, \ 2\leq i\leq2n-2,\quad  [e_{i},e_{2n+1-i}]=(-1)^{i}e_{2n}, \ 2\leq i\leq n, \\[1mm]
[e_{i},x_1]=ie_{i}, \ 1\leq i\leq2n-1, \quad [e_{2n},x_1]=(2n+1)e_{2n}, \\[1mm]
[e_{i},x_2]=e_{i}, \ 2\leq i\leq2n-1, \quad  [e_{2n},x_2]=2e_{2n}. \\[1mm]
\end{cases}$$

Note that $\frac12$-derivations of $\mathfrak{s}_{n,2}$ were studied by I. Kaygorodov and A. Khudoyberdiyev \cite{KKh}.

\begin{theorem}
Any $\frac12$-derivation $\varphi$ of the algebra $\mathfrak{s}_{n,2}$ has the form
$$\varphi(x_1)=\alpha x_1+(n-2)\beta e_n, \  \varphi(x_2)=\alpha x_2+\beta e_n, \  \varphi(e_k)=\alpha e_k, \  1 \leq k \leq n.$$
\end{theorem}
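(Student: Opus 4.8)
The plan is to write an arbitrary $\frac12$-derivation $\varphi$ of $\mathfrak{s}_{n,2}$ in the given basis and then extract all constraints by substituting each structure relation into the defining identity \eqref{halfderiv1}. Since $[\mathfrak{L},\mathfrak{L}]=\langle e_1,\dots,e_n\rangle$ is the nilradical and is invariant under any $\frac12$-derivation, I may write $\varphi(e_k)=\sum_{j=1}^{n}a_{kj}e_j$ for $1\le k\le n$, together with $\varphi(x_1)=\sum_{j}p_je_j+\mu_1x_1+\mu_2x_2$ and $\varphi(x_2)=\sum_{j}q_je_j+\nu_1x_1+\nu_2x_2$. Each step then reduces to comparing coefficients of basis vectors in an identity $\varphi([a,b])=\frac12\bigl([\varphi(a),b]+[a,\varphi(b)]\bigr)$.

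First I would harvest the relations that directly constrain the outer generators. Applying \eqref{halfderiv1} to $[x_1,x_2]=0$ gives $q_1=0$ and $p_j=(j-2)q_j$ for $j\ge 2$, while $[e_1,x_2]=0$ gives $\nu_1=0$, $a_{12}=0$ and $a_{1k}=q_{k-1}$ for $3\le k\le n$. With $q_1=0$ and $\nu_1=0$ in hand, substituting the uniform relation $[e_i,x_2]=e_i$ ($2\le i\le n$), and using $[e_i,e_j]=0$ for $i,j\ge 2$, collapses each $\varphi(e_i)$ to a diagonal term $\varphi(e_i)=\nu_2e_i$. Feeding this into the nilradical relation $[e_i,e_1]=e_{i+1}$ yields $\nu_2=\frac12(\nu_2+a_{11})$, hence $a_{11}=\nu_2$; writing $\alpha:=\nu_2$ we obtain $\varphi(e_i)=\alpha e_i$ for $2\le i\le n$ and $a_{11}=\alpha$, with only the off-diagonal part of $\varphi(e_1)$ still to be controlled.

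Next I would close the system using the action of $x_1$. The relation $[e_1,x_1]=e_1$ gives $\mu_1=\alpha$ and $p_{k-1}=(k-4)a_{1k}$ for $3\le k\le n$, while $[e_i,x_1]=(i-2)e_i$ gives $\mu_2=0$ together with $p_1=0$ (the latter from the coefficient of $e_{i+1}$ for an index $3\le i\le n-1$, which exists precisely because $n\ge 4$). On the other hand, the earlier relations $p_{k-1}=(k-3)q_{k-1}$ and $q_{k-1}=a_{1k}$ give $p_{k-1}=(k-3)a_{1k}$; comparing the two expressions for $p_{k-1}$ forces $a_{1k}=0$ for $3\le k\le n$. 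With $a_{12}=0$ this yields $\varphi(e_1)=\alpha e_1$, and simultaneously $p_j=q_j=0$ for all $j<n$. Only the top-degree components survive: setting $\beta:=q_n$ gives $p_n=(n-2)\beta$, which is exactly the asserted form $\varphi(x_1)=\alpha x_1+(n-2)\beta e_n$, $\varphi(x_2)=\alpha x_2+\beta e_n$, $\varphi(e_k)=\alpha e_k$.

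I expect the main obstacle to be organisational rather than conceptual: one must keep the several proportionality relations aligned so that the top components $p_n,q_n$ are correctly seen to remain free while every lower component is annihilated. The decisive mechanism is that the relations built on $x_1$ and on $x_2$ attach two different coefficients, $(k-4)$ and $(k-3)$, to the same unknown $a_{1k}$, and their mismatch is what kills the whole off-diagonal tail of $\varphi(e_1)$; the hypothesis $n\ge 4$ is used at exactly one point, to guarantee an index $i$ with $3\le i\le n-1$ producing $p_1=0$. Finally, a direct substitution of the displayed map back into \eqref{halfderiv1} confirms the converse, namely that every $\varphi$ of this form is indeed a $\frac12$-derivation.
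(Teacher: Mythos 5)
Your proof is correct: I checked every coefficient relation you extract ($q_1=0$, $p_j=(j-2)q_j$, $\nu_1=a_{12}=0$, $a_{1k}=q_{k-1}$, the collapse $\varphi(e_i)=\nu_2e_i$, $a_{11}=\nu_2$, $\mu_1=\alpha$, $\mu_2=p_1=0$, and $p_{k-1}=(k-4)a_{1k}$), and the decisive mismatch between $(k-3)a_{1k}$ and $(k-4)a_{1k}$ kills the tail exactly as you claim, while $q_n=\beta$ and $p_n=(n-2)\beta$ correctly survive unconstrained. Note that the paper itself gives no proof of this statement (it is imported from \cite{KKh}), but your argument is essentially the same generator-plus-coefficient-comparison scheme the paper uses for its own $\frac12$-derivation theorems in Section 2, with the minor streamlining that you invoke the invariance of $[\mathfrak{L},\mathfrak{L}]$ at the outset to suppress the $x_1,x_2$-components of each $\varphi(e_k)$ instead of deriving their vanishing.
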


\begin{theorem}
  Let $(\mathfrak{s}_{n,2}, \cdot, [-,-])$ be a transposed Poisson algebra structure defined on the Lie algebra
$\mathfrak{s}_{n,2}$. Then, up to isomorphism, there is only one non-trivial transposed Poisson algebra structure on
$\mathfrak{s}_{n,2}$. It is given by
$$x_1 \cdot x_1 = (n - 2)^2e_n, \ x_2 \cdot x_1 = (n - 2)e_n, \ x_2 \cdot x_2 = e_n,$$
where it is taken into account that the transposed Poisson algebra has its products with respect to the bracket $[-,-]$, and the remaining products are equal to zero.

\end{theorem}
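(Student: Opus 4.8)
The plan is to invoke Lemma~\ref{lemma1}: a transposed Poisson structure on $\mathfrak{s}_{n,2}$ is precisely a commutative associative product $\cdot$ for which every left-multiplication operator $L_z\colon y\mapsto z\cdot y$ is a $\frac12$-derivation. By the classification of $\frac12$-derivations of $\mathfrak{s}_{n,2}$ proved just above, each $L_z$ is governed by two scalars $\alpha_z,\beta_z$, namely $L_z(x_1)=\alpha_z x_1+(n-2)\beta_z e_n$, $L_z(x_2)=\alpha_z x_2+\beta_z e_n$ and $L_z(e_k)=\alpha_z e_k$. The product is thus encoded by the families $\{\alpha_z\},\{\beta_z\}$, and the whole task reduces to pinning these down from commutativity, in the form $L_z(y)=L_y(z)$, and then checking associativity.

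First I would exploit commutativity on the nilradical. From $e_k\cdot e_j=\alpha_{e_k}e_j=\alpha_{e_j}e_k$ and the linear independence of $e_j,e_k$ for $j\neq k$, every $\alpha_{e_k}$ must vanish, so $e_k\cdot e_j=0$. Substituting $\alpha_{e_k}=0$ into $L_{e_k}$ gives $e_k\cdot x_1=(n-2)\beta_{e_k}e_n$ and $e_k\cdot x_2=\beta_{e_k}e_n$; comparing these with $x_1\cdot e_k=\alpha_{x_1}e_k$ and $x_2\cdot e_k=\alpha_{x_2}e_k$, and again using independence (treating $k\neq n$ and $k=n$ separately, where $n-2\neq0$ because $n\geq 4$), forces $\alpha_{x_1}=\alpha_{x_2}=0$ and $\beta_{e_k}=0$ for every $k$. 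Hence all products that involve a basis vector $e_k$ vanish, and the only possibly nonzero products live among $x_1,x_2$.

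It then remains to determine $x_i\cdot x_j$. Writing out $L_{x_1},L_{x_2}$ with $\alpha_{x_1}=\alpha_{x_2}=0$ yields $x_1\cdot x_1=(n-2)\beta_{x_1}e_n$, $x_1\cdot x_2=\beta_{x_1}e_n$, $x_2\cdot x_1=(n-2)\beta_{x_2}e_n$ and $x_2\cdot x_2=\beta_{x_2}e_n$, and commutativity $x_1\cdot x_2=x_2\cdot x_1$ gives the single relation $\beta_{x_1}=(n-2)\beta_{x_2}$. Setting $\gamma:=\beta_{x_2}$ leaves the one-parameter family $x_1\cdot x_1=(n-2)^2\gamma e_n$, $x_1\cdot x_2=x_2\cdot x_1=(n-2)\gamma e_n$, $x_2\cdot x_2=\gamma e_n$. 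Associativity is automatic here: every product lands in $\langle e_n\rangle$ and $e_n\cdot y=0$ for all $y$, so every triple product vanishes; thus associativity imposes no further constraint.

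The remaining point—and the only genuinely nonroutine step—is the reduction up to isomorphism. For $\gamma=0$ the structure is trivial, while each $\gamma\neq0$ gives a nontrivial one, and I must show these are all isomorphic so that $\gamma$ can be normalized to $1$. For this I would produce a Lie-algebra automorphism rescaling $e_n$: one checks directly that $\psi(e_1)=e_1$, $\psi(e_i)=\mu e_i$ for $2\leq i\leq n$, $\psi(x_1)=x_1$, $\psi(x_2)=x_2$ is an automorphism of $\mathfrak{s}_{n,2}$ for every $\mu\neq0$, the only bracket needing verification being $[e_i,e_1]=e_{i+1}$, which is homogeneous of the correct weight. Taking $\mu=1/\gamma$ sends the $\gamma$-structure to the $1$-structure, since $\psi(\gamma e_n)=e_n$ and likewise for the other two products. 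This yields a unique nontrivial transposed Poisson structure up to isomorphism, namely the stated one, completing the argument.
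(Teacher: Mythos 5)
Your proof is correct, and it follows exactly the scheme this paper uses for every analogous theorem in Section 3 (and which the cited source \cite{KKh} uses): invoke Lemma~\ref{lemma1} to make each multiplication operator a $\frac12$-derivation, feed in the two-parameter classification of $\frac12$-derivations of $\mathfrak{s}_{n,2}$, kill all parameters except one via commutativity $\varphi_z(y)=\varphi_y(z)$, observe associativity is automatic since all products lie in $\langle e_n\rangle$ which annihilates everything, and normalize the surviving parameter by a diagonal Lie automorphism rescaling $e_n$. Note that the paper itself states this theorem without proof (it is quoted from \cite{KKh}), so your argument supplies a proof consistent with, and essentially identical in method to, the proofs the paper gives for its own results.
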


It's obvious from Eq. (\ref{eq:inter0}) that this structure is non-Poisson.

\section{\texorpdfstring{$\frac{1}{2} $} d-derivations of solvable Lie algebras with filiform nilradical}

In this section, we calculate $\frac{1}{2}$-derivation of solvable Lie algebras with filiform nilradical algebras.

\begin{theorem}\label{halfderiv1t} Any $\frac {1}{2}$-derivation $\varphi$ of the algebra $\mathfrak{s}^{1}_{n,1}(\beta)$
has the form:

for $n=4:$
\begin{center}
$\varphi(e_1)=\alpha_1e_1+\alpha_2e_2+\alpha_3e_3+\alpha_4e_4, \ \varphi(e_2)=\alpha_1e_2+\beta_3e_3+\beta_4e_n,\  \varphi(e_3)=\alpha_1e_3+\frac{1}{2}\beta_3e_4,$
\end{center}
\begin{center}
$\varphi(e_4)=\alpha_1e_4, \ \varphi(x)=\beta_3e_1+(\beta-1)\alpha_3e_2+\beta\alpha_4e_3+\delta_4e_4+\alpha_{1}x,$
\end{center}
with restrictions $(\beta-2)\alpha_2=(\beta-2)\beta_3=(2-\beta)\beta_4=0;$

for $n\geq5:$
\begin{center}
$\varphi(e_1)=\sum\limits_{i=1}^{n}\alpha_ie_i, \ \varphi(e_2)=\alpha_1e_2+\beta_ne_n,\  \varphi(e_i)=\alpha_1e_i, \  3\leq i\leq n,$
\end{center}
\begin{center}
$\varphi(x)=\sum\limits_{i=2}^{n-1}(i-3+\beta)\alpha_{i+1}e_i+\delta_ne_n+\alpha_{1}x,$
\end{center}
with restrictions $(\beta-2)\alpha_2=(n-2-\beta)\beta_n=0.$
\end{theorem}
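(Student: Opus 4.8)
The plan is to fix the basis $e_1,\dots,e_n,x$ of $\mathfrak{L}=\mathfrak{s}^{1}_{n,1}(\beta)$, write a general linear map as $\varphi(e_j)=\sum_{i=1}^n a_{ij}e_i+b_j x$ and $\varphi(x)=\sum_{i=1}^n c_i e_i+d\,x$, impose the defining identity \eqref{halfderiv1} on every pair of basis vectors, and solve the resulting linear system. Two preliminary reductions organize the work. First, since the derived algebra $[\mathfrak{L},\mathfrak{L}]$ is invariant under any $\frac12$-derivation and (for generic $\beta$) equals the nilradical $N=\langle e_1,\dots,e_n\rangle$, we obtain $\varphi(e_j)\in N$, i.e.\ $b_j=0$, the finitely many degenerate values of $\beta$ being checked directly. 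Second, it is convenient to record the weights of the diagonal operator $R_x(\cdot)=[\,\cdot\,,x]$: it acts by $\lambda_1=1$ on $e_1$ and by $\lambda_i=i-2+\beta$ on $e_i$, so that the whole computation is naturally graded by these weights.

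The engine of the argument is the chain of relations $[e_i,e_1]=e_{i+1}$. Because the only nonzero products inside $N$ are $[e_k,e_1]=e_{k+1}$, applying \eqref{halfderiv1} yields the recursion
\[
\varphi(e_{i+1})=\tfrac12\bigl([\varphi(e_i),e_1]+\alpha_1 e_{i+1}\bigr),\qquad 2\le i\le n-1,
\]
where $\alpha_1$ is the $e_1$-coefficient of $\varphi(e_1)$. Iterating this from $\varphi(e_2)$ upward expresses every $\varphi(e_i)$, $3\le i\le n$, through $\varphi(e_1)$ and $\varphi(e_2)$, and shows that the diagonal part stabilizes to $\varphi(e_i)=\alpha_1 e_i$ while at most one ``top'' correction term can survive.

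It then remains to exploit the relations with $x$. Applying \eqref{halfderiv1} to $[e_1,x]=e_1$ and to $[e_i,x]=(i-2+\beta)e_i$ and reading off each weight component does two things. The $e_1$- and $x$-components pin down $\varphi(x)$: they force $d=\alpha_1$ and express the $c_i$ through the $\alpha$'s, giving $\varphi(x)=\sum_{i=2}^{n-1}(i-3+\beta)\alpha_{i+1}e_i+\delta_n e_n+\alpha_1 x$ with the single free parameter $\delta_n$. Comparing the remaining components produces the \emph{resonance} relations, which reflect the weight-doubling feature of $\frac12$-derivations: a weight-$\lambda$ vector may be sent into the weight-$2\lambda$ space, and an off-diagonal coefficient survives exactly when $\lambda_i=2\lambda_j$. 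Concretely, the $e_2$-component of the identity for $[e_1,x]=e_1$ reads $\alpha_2=\tfrac12\beta\alpha_2$, i.e.\ $(\beta-2)\alpha_2=0$ (here $\lambda_2=2\lambda_1$ iff $\beta=2$), while cross-comparing the $e_n$-components coming from $[e_2,x]$ with the recursion gives $(n-2-\beta)\beta_n=0$.

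Finally the answer splits according to how many coincidences $\lambda_i=2\lambda_j$ are possible. For $n\ge5$ the weights $1,\beta,1+\beta,\dots,n-2+\beta$ are spread far enough apart that only $\alpha_2$ and $\beta_n$ can be resonant, leaving the two exceptional parameters recorded in the statement; substituting the solved form back into \eqref{halfderiv1} confirms it is indeed a $\frac12$-derivation. For $n=4$ the four weights $\{1,\beta,1+\beta,2+\beta\}$ admit extra doublings, so $\varphi(e_2)$ keeps two top terms $\beta_3e_3+\beta_4e_4$ and $\varphi(e_1)$ retains $\alpha_2,\alpha_3,\alpha_4$; the three constraints $(\beta-2)\alpha_2=(\beta-2)\beta_3=(2-\beta)\beta_4=0$ arise from the same cross-comparison of the $e_2$- and $e_3$-relations with $x$ (for instance, two independent expressions for the $e_1$-coefficient $c_1$ of $\varphi(x)$ force $(\beta-2)\beta_3=0$). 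The main obstacle is precisely this resonance bookkeeping: one must track, component by component, every coincidence of the form $\lambda_i=2\lambda_j$ and verify that no further exceptional coefficients appear, which is exactly why the low-dimensional case $n=4$ has to be separated out.
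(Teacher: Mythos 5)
Your overall strategy coincides with the paper's: fix the generators $e_1,e_2,x$, write the images with undetermined coefficients, exploit the recursion coming from $[e_i,e_1]=e_{i+1}$, and compare coefficients in the relations with $x$. Your two organizational shortcuts are sound and even improve on the paper: the invariance of $[\mathfrak{L},\mathfrak{L}]$ (which equals $\langle e_1,\dots,e_n\rangle$ whenever $\beta\neq 0$) legitimately removes the $x$-components that the paper eliminates by explicit computation of $\alpha_{n+1}=\beta_{n+1}=0$, and the weight language correctly produces $d=\alpha_1$, the formula for $\varphi(x)$, and the constraints $(\beta-2)\alpha_2=0$, $(n-2-\beta)\beta_n=0$, as well as the $n=4$ constraint $(\beta-2)\beta_3=0$ via the two competing expressions for $\delta_1$.

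However, there is a genuine gap at the step where you conclude the $n\ge 5$ case. The claim that for $n\ge5$ "the weights $1,\beta,1+\beta,\dots,n-2+\beta$ are spread far enough apart that only $\alpha_2$ and $\beta_n$ can be resonant" is false, and it is precisely the statement on which your $n\ge5$ conclusion rests. Since $\beta$ ranges over all of $\mathbb{C}$, further coincidences $\lambda_i=2\lambda_j$ do occur: for $\beta=\tfrac12$ one has $\lambda_1=2\lambda_2$, so the $e_1$-coefficient $\beta_1$ of $\varphi(e_2)$ passes the $[e_2,x]$-test (the paper's constraint there is $(2\beta-1)\beta_1=0$, not $\beta_1=0$); and for $\beta=j-2$ with $4\le j\le n-1$ one has $\lambda_j=2\lambda_2$, so the middle coefficient $\beta_j$ passes it as well. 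These coefficients do vanish in the final answer, but only because of relations that your $n\ge5$ argument never invokes: $\beta_1$ is killed by $\varphi([e_3,e_2])=0$, while $\beta_j$, $3\le j\le n-1$, are killed by pushing them through the recursion into $\varphi(e_3)$ (and, for $\beta_3$, further into $\varphi(e_4)$) and comparing with the relations $[e_3,x]$ and $[e_4,x]$ — the induced resonance $\lambda_{j+1}=2\lambda_3$ requires $\beta=j-3$, incompatible with $\beta=j-2$. This is exactly what the paper's pairs $\{e_3,e_2\}$, $\{e_3,x\}$, $\{e_4,x\}$ accomplish. Relatedly, your slogan "an off-diagonal coefficient survives exactly when $\lambda_i=2\lambda_j$" fails in both directions: $\alpha_3,\dots,\alpha_n$ survive with no resonance because they are absorbed into the coefficients $\delta_i$ of $\varphi(x)$, and resonant coefficients can still be forced to vanish by other relations. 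The cure is to run the cross-comparison bookkeeping you describe for $n=4$ uniformly for every $n$ — at which point your argument becomes the paper's.
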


\begin{proof}
It is easy to see that  from the multiplication table of the algebra $\mathfrak{s}^{1}_{n,1}(\beta)$ we conclude that $e_1, e_2$ and $x$ are the generator
basis elements of the algebra. We use these generators to calculate the $\frac12$-derivation.
$$\varphi(e_1)=\sum\limits_{i=1}^{n}\alpha_ie_i+\alpha_{n+1}x,\quad
\varphi(e_2)=\sum\limits_{i=1}^{n}\beta_ie_i+\beta_{n+1}x,\quad \varphi(x)=\sum\limits_{i=1}^{n}\delta_ie_i+\delta_{n+1}x.$$

Now consider the condition of $\frac 1 2$-derivation for the elements $e_1$ and $e_2:$
\[\varphi(e_3)=\varphi([e_2,e_1])=\frac{1}{2}([\varphi(e_2),e_1]+[e_2,\varphi(e_1)])=\] 
\[=\frac{1}{2}  ([ \sum\limits_{i=1}^{n} \beta_ie_i+\beta_{n+1}x,e_1]+[e_2, \sum\limits_{i=1}^{n} \alpha_ie_i+\alpha_{n+1}x]) \] 
\[=\frac{1}{2}(\sum\limits_{i=3}^{n}\beta_{i-1}e_i-\beta_{n+1}e_1+\alpha_1e_3+\alpha_{n+1}\beta e_2)=\frac{1}{2}(-\beta_{n+1}e_1+\alpha_{n+1}\beta e_2+(\alpha_1+\beta_2)e_3+\sum\limits_{i=4}^{n}\beta_{i-1}e_{i}).\]

Now consider the condition of $\frac 1 2$-derivation for the elements $e_1$ and $x:$
\[\varphi([e_1,x])=\frac{1}{2}([\varphi(e_1),x]+[e_1,\varphi(x)])=\frac{1}{2}([\sum\limits_{i=1}^{n}\alpha_ie_i+\alpha_{n+1}x,x]+
[e_1,\sum\limits_{i=1}^{n}\delta_ie_i+\delta_{n+1}x])\]

\[=\frac{1}{2}(\alpha_1e_1+\sum\limits_{i=2}^{n}(i-2+\beta)\alpha_ie_i-\sum\limits_{i=3}^{n}\delta_{i-1}e_i+\delta_{n+1}e_1)\] \[=\frac{1}{2}((\alpha_1+\delta_{n+1})e_1+\beta\alpha_2e_2+\sum\limits_{i=3}^{n}((i-2+\beta)\alpha_i-\delta_{i-1})e_i).\]

On the other hand
$$\varphi([e_1,x])=\varphi(e_1)=\sum\limits_{i=1}^{n}\alpha_ie_i+\alpha_{n+1}x.$$

Comparing coefficients of the basis elements we obtain that
\[\delta_{n+1}=\alpha_1, \quad (\beta-2)\alpha_2=0,\quad \delta_i=(i-3+\beta)\alpha_{i+1},\quad 2\leq i\leq n-1, \quad  \alpha_{n+1}=0.\]

Now consider the condition of $\frac 1 2$-derivation for the elements $e_2,x:$
\begin{center}
$\varphi([e_2,x])=\frac{1}{2}([\varphi(e_2),x]+[e_2,\varphi(x)])$
\end{center}
\begin{center}
$=\frac{1}{2}([\sum\limits_{i=1}^{n}\beta_ie_i+\beta_{n+1}x,x]+[e_2,\sum\limits_{i=1}^{n}\delta_ie_i+\delta_{n+1}x])
=\frac{1}{2}(\beta_1e_1+\sum\limits_{i=2}^{n}(i-2+\beta)\beta_ie_i+\delta_{1}e_3+\beta\delta_{n+1}e_2)$
\end{center}
\begin{center}
$=\frac{1}{2}\Big(\beta_1e_1+\beta(\beta_2+\delta_{n+1})e_2+((1+\beta)\beta_3+\delta_{1})e_3+\sum\limits_{i=4}^{n}(i-2+\beta)\beta_ie_i\Big).$
\end{center}
On the other hand
$$\varphi([e_2,x])=\beta \varphi(e_2)=\beta(\sum\limits_{i=1}^{n}\beta_ie_i+\beta_{n+1}x).$$

Comparing coefficients of the basis elements we obtain that
$$(2\beta-1)\beta_1=0, \ \beta(\beta_2-\delta_{n+1})=0,\ \delta_1=(\beta-1)\beta_3, \ (i-2-\beta)\beta_{i}=0,\ 4\leq i\leq n.$$

Now consider the condition of $\frac 1 2$-derivation for the elements $e_3$ and $x:$
\begin{center}
$\varphi([e_3,x])=\frac{1}{2}([\varphi(e_3),x]+[e_3,\varphi(x)])$
\end{center}
\begin{center}
$=\frac{1}{2}\Big([\frac{1}{2}(-\beta_{n+1}e_1+(\alpha_1+\beta_2)e_3+\sum\limits_{i=4}^{n}\beta_{i-1}e_{i}),x]+
[e_3,\sum\limits_{i=1}^{n}\delta_ie_i+\delta_{n+1}x]\Big)=$
\end{center}
\begin{center}
$=\frac{1}{2}\Big(\frac{1}{2}(-\beta_{n+1}e_1+(1+\beta)(\alpha_1+\beta_2)e_3+
\sum\limits_{i=4}^{n}(i-2+\beta)\beta_{i-1}e_{i})+
\delta_1e_4+(1+\beta)\delta_{n+1}e_3\Big).$
\end{center}
On the other hand
\begin{center}
$\varphi([e_3,x])=(1+\beta)\varphi(e_3)=\frac{1}{2}(1+\beta)(-\beta_{n+1}e_1+(\alpha_1+\beta_2)e_3+\sum\limits_{i=4}^{n}\beta_{i-1}e_{i}).$
\end{center}
Comparing coefficients of the basis elements we obtain that
$$(2\beta+1)\beta_{n+1}=0, \ (1+\beta)(\alpha_1+\beta_2-2\delta_{n+1})=0,\ 2\delta_1=\beta\beta_3, \ (i-4-\beta)\beta_{i-1}=0,\ 5\leq i\leq n.$$

Comparing the obtained equalities, we get the following relations.
$$(2\beta-1)\beta_1=0, \ \beta_2=\alpha_1,\ (\beta-2)\beta_3=0, \ \delta_1=\beta_3,$$
$$(\beta-2)\alpha_2=0, \ (n-2-\beta)\beta_{n}=0, \ (2\beta+1)\beta_{n+1}=0, \ \beta_{i}=0,\ 4\leq i\leq n-1.$$

We have the following
\begin{center}
$\varphi(e_1)=\sum\limits_{i=1}^{n}\alpha_ie_i,\
\varphi(e_2)=\beta_1e_1+\alpha_1e_2+\beta_3e_3+\beta_ne_n+\beta_{n+1}x,$
\end{center}
\begin{center}
$\varphi(e_3)=\frac{1}{2}(-\beta_{n+1}e_1+2\alpha_1e_3+\beta_{3}e_{4}),\ \varphi(x)=\beta_3e_1+\sum\limits_{i=2}^{n-1}(i-3+\beta)\alpha_{i+1}e_i+\delta_ne_n+\alpha_{1}x.$
\end{center}

Using property of the $\frac 1 2$-derivation for the product $[e_3,e_1]=e_4$ we have
\begin{center}
$\varphi(e_4)=\begin{cases}
\alpha_1e_4, & \mbox{if} \ n=4, \\
\alpha_1e_4+\frac14\beta_3e_5, & \mbox{if} \ n\geq5.\\
\end{cases}
$
\end{center}

If we check the situation for the elements $\{e_4,x\}$ and $\{e_3,e_2\}$, we get the following relations
$$\begin{array}{lll}
\varphi([e_3,e_2])=\frac{1}{2}([\varphi(e_3),e_2]+[e_3,\varphi(e_2)]), &\Rightarrow&   \beta_1=\beta_{n+1}=0,\\[1mm]
\varphi([e_4,x])=\frac{1}{2}([\varphi(e_4),x]+[e_4,\varphi(x)]), &\Rightarrow&   \beta_3=0 \ \ \mbox{for} \ \ n\geq5.
\end{array}$$

By induction, argument and the property of $\frac 1 2$-derivation, we derive
\begin{center}
$\varphi(e_{i})=\alpha_1e_{i}, \ 5\leq i\leq n.$
\end{center}
Thus we have the following

for $n=4:$
\begin{center}
$\varphi(e_1)=\alpha_1e_1+\alpha_2e_2+\alpha_3e_3+\alpha_4e_4, \ \varphi(e_2)=\alpha_1e_2+\beta_3e_3+\beta_4e_4,\  \varphi(e_3)=\alpha_1e_3+\frac{1}{2}\beta_3e_4,$
\end{center}
\begin{center}
$\varphi(e_4)=\alpha_1e_4, \ \varphi(x)=\beta_3e_1+(\beta-1)\alpha_3e_2+\beta\alpha_4e_3+\delta_4e_4+\alpha_{1}x,$
\end{center}
with restrictions $(\beta-2)\alpha_2=(\beta-2)\beta_3=(2-\beta)\beta_4=0;$

for $n\geq5:$
\begin{center}
$\varphi(e_1)=\sum\limits_{i=1}^{n}\alpha_ie_i, \ \varphi(e_2)=\alpha_1e_2+\beta_ne_n,\  \varphi(e_i)=\alpha_1e_i, \  3\leq i\leq n,$
\end{center}
\begin{center}
$\varphi(x)=\sum\limits_{i=2}^{n-1}(i-3+\beta)\alpha_{i+1}e_i+\delta_ne_n+\alpha_{1}x,$
\end{center}
with restrictions $(\beta-2)\alpha_2=(n-2-\beta)\beta_n=0.$ This completes the proof of the theorem. \end{proof}

Now we study the $\frac 1 2$-derivation of the algebra $\mathfrak{s}^{2}_{n,1}.$

\begin{theorem}\label{halfderiv2} Any $\frac 1 2$-derivation $\varphi$ of the algebra $\mathfrak{s}^{2}_{n,1}$ has the form
\begin{center}
$\varphi(e_1)=\alpha_1e_1+\sum\limits_{i=3}^{n}\alpha_ie_i, \ \varphi(e_i)=\alpha_1e_i, \  2\leq i\leq n, \ \varphi(x)=\sum\limits_{i=2}^{n-1}\alpha_{i+1}e_i+\delta_ne_n+\alpha_{1}x.$
\end{center}
\end{theorem}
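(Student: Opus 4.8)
The plan is to mirror the argument of Theorem~\ref{halfderiv1t}. The multiplication table shows that $e_1$, $e_2$ and $x$ generate $\mathfrak{s}^{2}_{n,1}$, since every $e_{i+1}$ arises as $[e_i,e_1]$ for $2\leq i\leq n-1$; hence a $\frac12$-derivation is determined by its values on these three elements. First I would set
$$\varphi(e_1)=\sum_{i=1}^{n}\alpha_ie_i+\alpha_{n+1}x,\qquad \varphi(e_2)=\sum_{i=1}^{n}\beta_ie_i+\beta_{n+1}x,\qquad \varphi(x)=\sum_{i=1}^{n}\delta_ie_i+\delta_{n+1}x,$$
and then impose the defining identity~\eqref{halfderiv1} on a short list of products, reading off linear relations among the coefficients at each step.

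The crucial structural difference from $\mathfrak{s}^{1}_{n,1}(\beta)$ is that here $[e_1,x]=0$. Applying~\eqref{halfderiv1} to the pair $\{e_1,x\}$ therefore gives the homogeneous relation $0=\tfrac12\bigl([\varphi(e_1),x]+[e_1,\varphi(x)]\bigr)$; expanding $[\varphi(e_1),x]=\sum_{i=2}^{n}\alpha_ie_i$ and $[e_1,\varphi(x)]=-\sum_{i=3}^{n}\delta_{i-1}e_i$ and comparing coefficients forces $\alpha_2=0$ and $\delta_i=\alpha_{i+1}$ for $2\leq i\leq n-1$. Next I would treat $\{e_2,x\}$ (using $[e_2,x]=e_2$) to obtain $\beta_1=\beta_{n+1}=0$, $\beta_2=\delta_{n+1}$, $\beta_3=\delta_1$ and $\beta_i=0$ for $4\leq i\leq n$. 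Computing $\varphi(e_3)$ from $[e_2,e_1]=e_3$ and feeding the result into the identity for $\{e_3,x\}$ (using $[e_3,x]=e_3$) then pins down the remaining freedom: $\alpha_{n+1}=0$, $\delta_1=0$ (hence $\beta_3=0$), and $\alpha_1=\beta_2=\delta_{n+1}$.

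With these relations in hand one gets $\varphi(e_2)=\alpha_1e_2$ and $\varphi(e_3)=\alpha_1e_3$, and I would finish by induction on $i$: assuming $\varphi(e_i)=\alpha_1e_i$, the identity applied to $[e_i,e_1]=e_{i+1}$ yields $\varphi(e_{i+1})=\tfrac12(\alpha_1e_{i+1}+\alpha_1e_{i+1})=\alpha_1e_{i+1}$, so $\varphi(e_i)=\alpha_1e_i$ for all $2\leq i\leq n$. Collecting everything reproduces the stated form, with $\alpha_1,\alpha_3,\dots,\alpha_n$ and $\delta_n$ as the only free parameters. The computation is entirely routine; the only points demanding care are the bookkeeping of the many coefficients and, relative to the $\mathfrak{s}^{1}_{n,1}(\beta)$ case, remembering that the vanishing of $[e_1,x]$ makes the $\{e_1,x\}$ relation homogeneous and thereby removes the $\beta$-dependent constraints that appeared there.
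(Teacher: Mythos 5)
Your proposal is correct and follows essentially the same route as the paper's own proof: the same generator ansatz for $\varphi(e_1),\varphi(e_2),\varphi(x)$, the same key pairs $\{e_1,x\}$, $\{e_2,x\}$, $[e_2,e_1]=e_3$ and $\{e_3,x\}$ to pin down the coefficients, and the same induction along $[e_i,e_1]=e_{i+1}$ to conclude $\varphi(e_i)=\alpha_1 e_i$. The only (harmless) difference is ordering, and you in fact state $\beta_1=0$ explicitly where the paper leaves it implicit.
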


\begin{proof} From the multiplication table of the algebra  $\mathfrak{s}^{2}_{n,1}$ we conclude that $e_1, e_2$ and $x$ are the generator basis elements of
the algebra. We use these generators to calculate $\frac12$-derivation:
\begin{center}
$\varphi(e_1)=\sum\limits_{i=1}^{n}\alpha_ie_i+\alpha_{n+1}x,\ \varphi(e_2)=\sum\limits_{i=1}^{n}\beta_ie_i+\beta_{n+1}x,\ \varphi(x)=\sum\limits_{i=1}^{n}\delta_ie_i+\delta_{n+1}x.$
\end{center}
Now consider the condition of $\frac 1 2$-derivation for the elements $e_1$ and $e_2:$
\begin{center}
$\varphi(e_3)=\varphi([e_2,e_1])=\frac{1}{2}([\varphi(e_2),e_1]+[e_2,\varphi(e_1)])$
\end{center}
\begin{center}
$=\frac{1}{2}([\sum\limits_{i=1}^{n}\beta_ie_i+\beta_{n+1}x,e_1]+[e_2,\sum\limits_{i=1}^{n}\alpha_ie_i+\alpha_{n+1}x])
=\frac{1}{2}(\sum\limits_{i=3}^{n}\beta_{i-1}e_i+\alpha_1e_3+\alpha_{n+1}e_2).$
\end{center}

Using property of the $\frac 1 2$-derivation for the products $[e_1,x]=0$ and $[e_2,x]=e_2$, we have
$$\begin{array}{lll}
[e_1,x]=0, &\Rightarrow&   \alpha_2=0,\ \delta_i=\alpha_{i+1},\ 2\leq i\leq n-1,\\[1mm]
[e_2,x]=e_2, &\Rightarrow&   \delta_1=\beta_3, \ \delta_{n+1}=\beta_2,\ \beta_{i}=0,\ 4\leq i\leq n+1.
\end{array}$$

Now consider the condition of $\frac 1 2$-derivation for the elements $e_3,x:$
\begin{center}
$\varphi([e_3,x])=\frac{1}{2}([\varphi(e_3),x]+[e_3,\varphi(x)])$
\end{center}
\begin{center}
$=\frac{1}{2}\Big([\frac{1}{2}((\beta_{2}+\alpha_1)e_3+\beta_{3}e_4+\alpha_{n+1}e_2),x]+
[e_3,\beta_3e_1+\sum\limits_{i=2}^{n-1}\alpha_{i+1}e_i+\delta_ne_n+\beta_2x]\Big)$
\end{center}
\begin{center}
$=\frac{1}{2}\Big(\frac{1}{2}((\beta_{2}+\alpha_1)e_3+\beta_{3}e_4+\alpha_{n+1}e_2)+\beta_3e_4+\beta_2e_3\Big).$
\end{center}
On the other hand
\begin{center}
$\varphi([e_3,x])=\varphi(e_3)=\frac{1}{2}((\beta_{2}+\alpha_1)e_3+\beta_{3}e_4+\alpha_{n+1}e_2).$
\end{center}
Comparing coefficients of the basis elements we obtain that
$\alpha_{n+1}=0, \ \beta_2=\alpha_1,\ \beta_3=0.$
Thus we have the following
\begin{center}
$\varphi(e_1)=\alpha_1e_1+\sum\limits_{i=3}^{n}\alpha_ie_i,\ \varphi(e_2)=\alpha_1e_2,\
\varphi(e_3)=\alpha_1e_3, \ \varphi(x)=\sum\limits_{i=2}^{n-1}\alpha_{i+1}e_i+\delta_ne_n+\alpha_{1}x.$
\end{center}
Now consider the condition of $\frac 1 2$-derivation for the elements $e_1$ and $e_i$ for $3\leq i\leq n-1:$
\begin{center}
$\varphi(e_{i+1})=\varphi([e_i,e_1])=\frac{1}{2}([\varphi(e_i),e_1]+[e_i,\varphi(e_1)])$
\end{center}
\begin{center}
$=\frac{1}{2}([\alpha_1e_i,e_1]+[e_i,\alpha_1e_1+\sum\limits_{j=3}^{n}\alpha_je_j])
=\frac{1}{2}(\alpha_1e_{i+1}+\alpha_1e_{i+1})=\alpha_1e_{i+1}.$
\end{center}
This completes the proof of the theorem. \end{proof}

Now we will study the $\frac 1 2$-derivation of the algebra $\mathfrak{s}^{3}_{n,1}.$

\begin{theorem}\label{halfderiv3} Any $\frac 1 2$-derivation $\varphi$ of the algebra $\mathfrak{s}^{3}_{n,1}$ has the form
\begin{center}
$\varphi(e_1)=\alpha_1e_1+\sum\limits_{i=3}^{n}\alpha_ie_i, \ \varphi(e_i)=\alpha_1e_i, \  2\leq i\leq n, \  \varphi(x)=\sum\limits_{i=3}^{n-1}(i-2)\alpha_{i+1}e_i+\delta_ne_n+\alpha_{1}x.$
\end{center}
\end{theorem}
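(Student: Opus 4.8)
The plan is to follow the strategy already used in the proofs of Theorems~\ref{halfderiv1t} and~\ref{halfderiv2}. From the multiplication table of $\mathfrak{s}^{3}_{n,1}$ one checks that $e_1,e_2$ and $x$ generate the whole algebra, so a $\frac12$-derivation $\varphi$ is completely determined by its values on these three elements. I would begin by writing
$$\varphi(e_1)=\sum_{i=1}^{n}\alpha_ie_i+\alpha_{n+1}x,\quad \varphi(e_2)=\sum_{i=1}^{n}\beta_ie_i+\beta_{n+1}x,\quad \varphi(x)=\sum_{i=1}^{n}\delta_ie_i+\delta_{n+1}x,$$
and then computing $\varphi(e_3)$ from $[e_2,e_1]=e_3$ via \eqref{halfderiv1}, which expresses $\varphi(e_3)$ linearly in the $\alpha$'s and $\beta$'s exactly as in the earlier cases.

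The step that separates this algebra from $\mathfrak{s}^{2}_{n,1}$ is the bracket $[e_1,x]=e_1+e_2$. Imposing \eqref{halfderiv1} there forces
$$\varphi(e_1)+\varphi(e_2)=\frac{1}{2}\bigl([\varphi(e_1),x]+[e_1,\varphi(x)]\bigr),$$
and because the left-hand side carries the extra $e_2$-component coming from the $e_2$ term in $[e_1,x]$, this relation intertwines the $\alpha$-coefficients with the $\beta$-coefficients instead of separating them. I expect this coupling to be the main obstacle: unlike the $\mathfrak{s}^{2}$ case, one cannot resolve $\varphi(e_1)$ and $\varphi(e_2)$ independently, so both families of unknowns must be carried through simultaneously. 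To close the system I would add the relations obtained from $[e_2,x]=e_2$ and $[e_3,x]=2e_3$; comparing coefficients across these three identities should yield $\alpha_2=0$, $\beta_2=\alpha_1$, the vanishing of the remaining off-diagonal $\beta_i$ and of the $x$-components $\alpha_{n+1}=\beta_{n+1}=0$, together with $\delta_1=0$, $\delta_{n+1}=\alpha_1$ and $\delta_i=(i-2)\alpha_{i+1}$ for $2\le i\le n-1$ (so that $\delta_2=0$ automatically, which is why the sum in the statement starts at $i=3$).

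Finally, with $\varphi(e_1),\varphi(e_2),\varphi(e_3)$ and $\varphi(x)$ pinned down, I would propagate along the chain $[e_i,e_1]=e_{i+1}$ by induction to obtain $\varphi(e_i)=\alpha_1e_i$ for $4\le i\le n$, checking at each step—exactly as at the end of the proof of Theorem~\ref{halfderiv2}, and using that $[e_i,e_j]=0$ for $i,j\ge 2$ in the filiform nilradical—that no fresh constraints appear. The bookkeeping differs only in that $\mathrm{ad}_x$ acts on $e_i$ with eigenvalue $(i-1)$ rather than $(i-2+\beta)$, but the inductive mechanism is unchanged. The hypothesis $n\ge 4$ guarantees that the constraint analogous to $(n-3)\beta_n=0$ forces $\beta_n=0$, so no extra free parameter survives on $\varphi(e_2)$; collecting the remaining constants $\alpha_1,\alpha_3,\dots,\alpha_n,\delta_n$ then gives precisely the stated form.
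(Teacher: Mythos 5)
Your proposal is correct and takes essentially the same route as the paper's proof: the same three generators, the same identities ($[e_2,x]=e_2$, $[e_1,x]=e_1+e_2$, $[e_2,e_1]=e_3$, $[e_3,x]=2e_3$), and the same induction along $[e_i,e_1]=e_{i+1}$. One cosmetic remark: $\delta_2=0$ is not ``automatic'' from the formula $\delta_i=(i-2)\alpha_{i+1}$ --- the coefficient comparison in the $[e_1,x]$ identity actually gives $\delta_2=-2\beta_3$, and it vanishes only because the relation $[e_3,x]=2e_3$ (applied to $\varphi(e_3)=\alpha_1e_3+\tfrac12\beta_3e_4$) forces $\beta_3=0$, a step your plan does include.
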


\begin{proof}
The algebra $\mathfrak{s}^{3}_{n,1}$ has $e_1, e_2$ and $x$ as generators. We put
\begin{center}
$\varphi(e_1)=\sum\limits_{i=1}^{n}\alpha_ie_i+\alpha_{n+1}x,\
\varphi(e_2)=\sum\limits_{i=1}^{n}\beta_ie_i+\beta_{n+1}x,\ \varphi(x)=\sum\limits_{i=1}^{n}\delta_ie_i+\delta_{n+1}x.$
\end{center}
Now consider the condition of $\frac 1 2$-derivation for the elements $e_2$ and $x:$
\begin{center}
$\varphi([e_2,x])=\frac{1}{2}([\varphi(e_2),x]+[e_2,\varphi(x)])
=\frac{1}{2}([\sum\limits_{i=1}^{n}\beta_ie_i+\beta_{n+1}x,x]+
[e_2,\sum\limits_{i=1}^{n}\delta_ie_i+\delta_{n+1}x])$
\end{center}
\begin{center}
$=\frac{1}{2}\Big(\beta_1(e_1+e_2)+\sum\limits_{i=2}^{n}(i-1)\beta_ie_i+\delta_{1}e_3+\delta_{n+1}e_2\Big).$
\end{center}
On the other hand
\begin{center}
$\varphi([e_2,x])=\varphi(e_2)=\sum\limits_{i=1}^{n}\beta_ie_i+\beta_{n+1}x.$
\end{center}
Comparing coefficients of the basis elements we obtain that
\begin{center}
$\beta_1=0, \ \delta_1=0, \ \delta_{n+1}=\beta_2,\ \beta_{i}=0,\ 4\leq i\leq n+1.$
\end{center}

Now consider the condition of $\frac 1 2$-derivation for the elements $e_1,x:$
\begin{center}
$\varphi([e_1,x])=\frac{1}{2}([\varphi(e_1),x]+[e_1,\varphi(x)])=
\frac{1}{2}([\sum\limits_{i=1}^{n}\alpha_ie_i+\alpha_{n+1}x,x]+
[e_1,\sum\limits_{i=1}^{n}\delta_ie_i+\delta_{n+1}x])$
\end{center}
\begin{center}
$=\frac{1}{2}\Big(\alpha_1(e_1+e_2)+\sum\limits_{i=2}^{n}(i-1)\alpha_ie_i-\sum\limits_{i=3}^{n}\delta_{i-1}e_i+\delta_{n+1}(e_1+e_2)\Big).$
\end{center}
On the other hand
\begin{center}
$\varphi([e_1,x])=\varphi(e_1+e_2)=\sum\limits_{i=1}^{n}\alpha_ie_i+\alpha_{n+1}x+\beta_2e_2+\beta_3e_3.$
\end{center}
Comparing coefficients of the basis elements we obtain that
\begin{center}
$\alpha_2=0,\ \alpha_{n+1}=0, \ \delta_{n+1}=\alpha_1, \ \delta_2=-2\beta_3, \ \delta_i=(i-2)\alpha_{i+1},\ 3\leq i\leq n-1.$
\end{center}
From the $\frac 1 2$-derivation property (\ref{halfderiv}) we have
\begin{center}
$\varphi(e_3)=\varphi([e_2,e_1])=\frac{1}{2}([\varphi(e_2),e_1]+[e_2,\varphi(e_1)])$
\end{center}
\begin{center}
$=\frac{1}{2}([\alpha_1e_2+\beta_{3}e_3,e_1]+[e_2,\alpha_1e_1+\sum\limits_{i=3}^{n}\alpha_ie_i])
=\alpha_1e_3+\frac{1}{2}\beta_3e_4.$
\end{center}

Therefore, from $\varphi([e_3,x])=\frac{1}{2}([\varphi(e_3),x]+[e_3,\varphi(x)]),$ we obtain $\beta_3=0.$

By applying the induction and the $\frac 1 2$-derivation property (\ref{halfderiv}) for $3\leq i\leq n-1,$ we derive

\begin{center}
$\varphi(e_{i+1})=\varphi([e_i,e_1])=\frac{1}{2}([\varphi(e_i),e_1]+[e_i,\varphi(e_1)])$
\end{center}
\begin{center}
$=\frac{1}{2}([\alpha_1e_i,e_1]+[e_i,\alpha_1e_1+\sum\limits_{j=3}^{n}\alpha_je_j])
=\frac{1}{2}(\alpha_1e_{i+1}+\alpha_1e_{i+1})=\alpha_1e_{i+1}.$
\end{center}
This completes the proof of the theorem. \end{proof}

Now we will study the $\frac 1 2$-derivation of the algebra $\mathfrak{s}^{4}_{n,1}(\alpha_3, \alpha_4, \ldots, \alpha_{n-1}).$

\begin{theorem}\label{halfderiv4} Any $\frac 1 2$-derivation $\varphi$ of the algebra $\mathfrak{s}^{4}_{n,1}(\alpha_3, \alpha_4, \ldots, \alpha_{n-1})$ has the form
\begin{center}
$\varphi(e_1)=a_1e_1+\sum\limits_{i=3}^{n}a_ie_i,  \varphi(e_i)=a_1e_i, \   2\leq i\leq n,$

$ \varphi(x)=\sum\limits_{i=2}^{n-1}(a_{i+1}+\sum\limits_{t=3}^{i-1}\alpha_{t}a_{i-t+2})e_i+c_ne_n+a_1x.$
\end{center}
\end{theorem}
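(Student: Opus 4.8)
My plan is to follow the template of Theorems~\ref{halfderiv1t}--\ref{halfderiv3}: since the multiplication table shows that $e_1,e_2,x$ generate $\mathfrak{s}^{4}_{n,1}(\alpha_3,\dots,\alpha_{n-1})$, I first set
\[\varphi(e_1)=\sum_{i=1}^{n}a_ie_i+a_{n+1}x,\quad \varphi(e_2)=\sum_{i=1}^{n}b_ie_i+b_{n+1}x,\quad \varphi(x)=\sum_{i=1}^{n}c_ie_i+c_{n+1}x,\]
compute $\varphi(e_3)$ from $[e_2,e_1]=e_3$, and observe that every higher value is then forced by $\varphi(e_{i+1})=\frac{1}{2}([\varphi(e_i),e_1]+[e_i,\varphi(e_1)])$. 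Because $[e_i,e_j]=0$ for $i,j\geq 2$, this recursion simplifies to $\varphi(e_{i+1})=\frac{1}{2}([\varphi(e_i),e_1]+a_1e_{i+1}+a_{n+1}[e_i,x])$, so once I show that the $x$-component $a_{n+1}$ of $\varphi(e_1)$ vanishes and that $\varphi(e_i)=a_1e_i$, a one-line induction yields $\varphi(e_i)=a_1e_i$ for all $3\leq i\leq n$.

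The technical heart is the pair $(e_1,x)$. Since $[e_1,x]=0$, the $\frac{1}{2}$-derivation identity collapses to
\[0=\frac{1}{2}\big([\varphi(e_1),x]+[e_1,\varphi(x)]\big)=\frac{1}{2}\Big(\sum_{i=2}^{n}a_i\big(e_i+\sum_{l=i+2}^{n}\alpha_{l+1-i}e_l\big)-\sum_{i=2}^{n-1}c_ie_{i+1}\Big).\]
Matching the coefficient of $e_2$ gives $a_2=0$, and matching the coefficient of $e_k$ for $3\leq k\leq n$ gives
\[c_{k-1}=a_k+\sum_{i=2}^{k-2}\alpha_{k+1-i}a_i\,;\]
reindexing by $t=k+1-i$ and discarding the vanishing $a_2$-term turns this into $c_j=a_{j+1}+\sum_{t=3}^{j-1}\alpha_t a_{j-t+2}$ for $2\leq j\leq n-1$, which is exactly the convolution appearing in the claimed $\varphi(x)$. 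Note that $c_n$ is never reached by this relation, so it survives as the free parameter $c_n$. Keeping track of how the $\alpha$-tail of the $x$-action convolves against the off-diagonal entries $a_3,\dots,a_n$ of $\varphi(e_1)$ is the main bookkeeping obstacle.

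It remains to clear out the spurious coefficients. The invariance of $[\mathfrak{L},\mathfrak{L}]=\langle e_2,\dots,e_n\rangle$ already gives $b_1=b_{n+1}=0$, and I would then impose the identity on the ``weight'' pairs $(e_i,x)$, whose brackets $[e_i,x]=e_i+(\mbox{higher order})$ play the role of eigenvectors for the $x$-action. Comparing lowest-order components in $(e_2,x)$ and $(e_3,x)$, and feeding in the convolution values of the $c_j$, forces $a_{n+1}=0$, $c_1=0$, $b_2=c_{n+1}$ and $b_3=\dots=b_n=0$, i.e.\ $\varphi(e_2)=b_2e_2$; once $a_{n+1}=0$ the induction of the first paragraph gives $\varphi(e_i)=a_1e_i$, and finally the relation $(e_n,x)$ with $[e_n,x]=e_n$ reduces to $\frac{1}{2}(a_1+c_{n+1})=a_1$, so $c_{n+1}=a_1$ and hence $b_2=a_1$. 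I expect the genuine difficulty to be purely organizational: arranging the coefficient comparisons so that the convolution closes into the stated closed form, and verifying that a nonzero $a_{n+1}$ would cascade inconsistent lower-order terms through the recursion, which is what ultimately pins $a_{n+1}=0$.
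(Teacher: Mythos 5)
Your overall route is the paper's: fix the generators $e_1,e_2,x$, extract $a_2=0$ and the convolution $c_i=a_{i+1}+\sum_{t=3}^{i-1}\alpha_t a_{i-t+2}$ from the pair $(e_1,x)$, propagate $\varphi$ along $e_{i+1}=[e_i,e_1]$, and kill the remaining constants with pairs of the form $(e_j,x)$. Your two departures are harmless or even cleaner: invariance of $[\mathfrak{L},\mathfrak{L}]=\langle e_2,\dots,e_n\rangle$ gives $b_1=b_{n+1}=0$, where the paper instead uses $\varphi([e_3,e_2])=0$; and you work with the low pairs $(e_2,x),(e_3,x)$ where the paper uses $(e_n,x)$, $(e_{n-1},x)$, $(e_2,x)$. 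The individual comparisons you cite do check out: the $e_2$-coefficient of $(e_3,x)$ gives $\frac{1}{2}a_{n+1}=\frac{1}{4}a_{n+1}$, hence $a_{n+1}=0$; the $e_2$-coefficient of $(e_2,x)$ gives $b_2=c_{n+1}$; and the $e_3$-coefficient of $(e_2,x)$ together with the $e_4$-coefficient of $(e_3,x)$ give $b_3=c_1$ and $b_3=2c_1$, hence $b_3=c_1=0$.

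The gap is a circular ordering around $b_2=a_1$. At the point where you invoke ``the induction of the first paragraph,'' you only know $\varphi(e_2)=b_2e_2$ with $b_2=c_{n+1}$; the recursion then yields $\varphi(e_i)=\frac{(2^{i-2}-1)a_1+b_2}{2^{i-2}}\,e_i$ (exactly the formula the paper carries through its induction, specialized to $a_{n+1}=b_3=\dots=b_n=0$), not $\varphi(e_i)=a_1e_i$ --- that would require $b_2=a_1$, which you postpone to the very last step. In turn, your final relation $\frac{1}{2}(a_1+c_{n+1})=a_1$ from $(e_n,x)$ presupposes $\varphi(e_n)=a_1e_n$, i.e.\ presupposes the conclusion. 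The same issue undermines the claim $b_4=\dots=b_n=0$: after $a_{n+1}=0$ and $b_2=c_{n+1}$, the $e_k$-coefficient ($k\geq 4$) of $(e_2,x)$ reads $\frac{1}{2}b_k=(b_2-a_1)\frac{(2^{k-2}-1)\alpha_{k-1}}{2^{k-2}}+\sum_{i=3}^{k-2}\bigl(\frac{1}{2}-\frac{1}{2^{k-i}}\bigr)\alpha_{k+1-i}b_i$, so these coefficients vanish inductively only once $b_2=a_1$ is known. The repair is immediate and stays inside your own toolkit: the $e_3$-coefficient of $(e_3,x)$ gives $a_1+b_2=2c_{n+1}$, which combined with $b_2=c_{n+1}$ forces $b_2=c_{n+1}=a_1$ at once; alternatively, impose $(e_n,x)$ with the honest coefficient $\mu=\frac{(2^{n-2}-1)a_1+b_2}{2^{n-2}}$, getting $\mu=c_{n+1}=b_2$ and hence $a_1=b_2$. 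With $b_2=a_1$ established before the induction, your argument closes and reproduces the paper's statement.
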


\begin{proof} We can easily show that, the algebra $\mathfrak{s}^{4}_{n,1}(\alpha_3, \alpha_4, \ldots, \alpha_{n-1})$ has $3$ generators. We put the $\frac12$-derivation for generators:
\begin{center}
$\varphi(e_1)=\sum\limits_{i=1}^{n}a_ie_i+a_{n+1}x,\
\varphi(e_2)=\sum\limits_{i=1}^{n}b_ie_i+b_{n+1}x,\ \varphi(x)=\sum\limits_{i=1}^{n}c_ie_i+c_{n+1}x.$
\end{center}
From $0=\varphi([e_1,x])=\frac{1}{2}([\varphi(e_1),x]+[e_1,\varphi(x)])$, we obtain that
\begin{center}
$a_2=0,\ c_i=a_{i+1}+\sum\limits_{t=3}^{i-1}\alpha_{t}a_{i-t+2},\ 2\leq i\leq n-1.$
\end{center}
Now consider the $\frac 1 2$-derivation for the elements $e_1$ and $e_2:$
\begin{center}
$\varphi(e_3)=\varphi([e_2,e_1])=\frac{1}{2}([\varphi(e_2),e_1]+[e_2,\varphi(e_1)])$
\end{center}
\begin{center}
$=\frac{1}{2}([\sum\limits_{i=1}^{n}b_ie_i+b_{n+1}x,e_1]+[e_2,\sum\limits_{i=1}^{n}a_ie_i+a_{n+1}x])
=\frac{1}{2}\Big(\sum\limits_{i=3}^{n}b_{i-1}e_i+a_1e_3+a_{n+1}(e_2+\sum\limits_{t=4}^{n}\alpha_{t-1}e_t)\Big)$
\end{center}
\begin{center}
$=\frac{1}{2}\Big(a_{n+1}e_2+(a_1+b_2)e_3+\sum\limits_{i=4}^{n}(b_{i-1}+a_{n+1}\alpha_{i-1})e_i\Big).$
\end{center}

Similarly, from $0=\varphi([e_3,e_2])=\frac{1}{2}([\varphi(e_3),e_2]+[e_3,\varphi(e_2)])$, we derive $b_1=0, \ b_{n+1}=0.$

%
%

We prove the following equality for $3\leq i\leq n$ by induction.
\begin{center}
$\varphi(e_i)=\frac{1}{2^{i-2}}\Big((2^{i-2}-1)a_{n+1}e_{i-1}+((2^{i-2}-1)a_1+b_2)e_i$
\end{center}
\begin{center}
$+\sum\limits_{t=i+1}^{n}(b_{t-i+2}+(2^{i-2}-1)a_{n+1}\alpha_{t-i+2})e_t\Big).$
\end{center}
If $i=3$, the relationship is fulfilled according to the above equality. Now we prove that it is true for $i$ and for $i+1$.
Now consider the condition of $\frac 1 2$-derivation for the elements $e_i,e_1:$
\begin{center}
$\varphi(e_{i+1})=\varphi([e_i,e_1])=\frac{1}{2}([\varphi(e_i),e_1]+[e_i,\varphi(e_1)])$
\end{center}
\begin{center}
$=\frac{1}{2}\Big([\frac{1}{2^{i-2}}\Big((2^{i-2}-1)a_{n+1}e_{i-1}+((2^{i-2}-1)a_1+b_2)e_i$
\end{center}
\begin{center}
$+\sum\limits_{t=i+1}^{n}(b_{t-i+2}+(2^{i-2}-1)a_{n+1}\alpha_{t-i+2})e_t\Big),e_1]+[e_i,\sum\limits_{i=1}^{n}a_ie_i+a_{n+1}x]\Big)$
\end{center}
\begin{center}
$=\frac{1}{2}\Big(\frac{1}{2^{i-2}}\Big((2^{i-2}-1)a_{n+1}e_{i}+((2^{i-2}-1)a_1+b_2)e_{i+1}$
\end{center}
\begin{center}
$+\sum\limits_{t=i+2}^{n}(b_{t-i+1}+(2^{i-2}-1)a_{n+1}\alpha_{t-i+1})e_t\Big)+a_1e_{i+1}+a_{n+1}(e_i+\sum\limits_{t=i+2}^{n}\alpha_{t-i+1}e_t)\Big)$
\end{center}
\begin{center}
$=\frac{1}{2^{i-1}}\left((2^{i-1}-1)a_{n+1}e_{i}+((2^{i-1}-1)a_1+b_2)e_{i+1}+
\sum\limits_{t=i+2}^{n}(b_{t-i+1}+(2^{i-1}-1)a_{n+1}\alpha_{t-i+1})e_t\right).$
\end{center}

Furthermore, using the property of $\frac 1 2$-derivation for the products $[e_n,x]=e_n, [e_{n-1},x]=e_{n-1}$ and $[e_2,x]=e_2+\sum\limits_{i=4}^{n}\alpha_{i-1}e_i$, we have
$$a_{n+1}=0, \ c_{n+1}=\frac{1}{2^{n-2}}((2^{n-2}-1)a_1+b_2), \ b_{2}=a_1, \ b_3=2^{n-3}c_1,$$
$$c_1=0, \  b_{i}=0,\ 4\leq i\leq n.$$
This completes the proof of the theorem. \end{proof}

Below, we present analogous descriptions of the $\frac 1 2$-derivation for the solvable complex Lie algebras having a nilradical isomorphic to the algebra $\mathfrak{Q}_{2n}$.

\begin{theorem}\label{2halfderiv12} Any $\frac 1 2$-derivation $\varphi$ of the algebras $\mathfrak{r}_{2n+1}(\lambda)$ has the form
\begin{center}
$\varphi(e_1)=\alpha_1e_1+\sum\limits_{i=3}^{2n}\alpha_ie_i,  \ \  \varphi(e_2)=\alpha_1e_2+\beta_{2n-1}e_{2n-1}+\beta_{2n}e_{2n},$
\end{center}

\begin{center}
$\varphi(e_{i})=\alpha_1e_i+\frac{(-1)^{i-1}}{2}\alpha_{2n+2-i}e_{2n}, \ 3\leq i\leq 2n-1, \ \varphi(e_{2n})=\alpha_1e_{2n},$
\end{center}
\begin{center}
$ \varphi(x)=\sum\limits_{i=2}^{2n-2}\frac{2i-2n-1}{2}\alpha_{i+1}e_i+(3-2n)\beta_{2n}e_{2n-1}+\delta_{2n}e_{2n}+\alpha_{1}x,$
\end{center}
where
\begin{equation}\label{tau1lambda}(2n-3-\lambda)\beta_{2n-1}=0,  \ (2n-5+2\lambda)\alpha_i=0, \ 3\leq i\leq 2n.\end{equation}
\end{theorem}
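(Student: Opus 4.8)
The plan is to mirror the strategy of the proofs of Theorems~\ref{halfderiv1t}--\ref{halfderiv4}. Reading off the multiplication table of $\mathfrak{r}_{2n+1}(\lambda)$, the elements $e_1,e_2,x$ generate the whole algebra: the filiform relations $e_{i+1}=[e_i,e_1]$ produce $e_3,\dots,e_{2n-1}$ out of $e_2$, the element $e_{2n}=[e_2,e_{2n-1}]$ is recovered from the symplectic bracket, and $x$ lies outside the nilradical. A $\frac12$-derivation $\varphi$ is therefore determined by
$$\varphi(e_1)=\sum_{i=1}^{2n}\alpha_ie_i+\alpha_{2n+1}x,\qquad \varphi(e_2)=\sum_{i=1}^{2n}\beta_ie_i+\beta_{2n+1}x,\qquad \varphi(x)=\sum_{i=1}^{2n}\delta_ie_i+\delta_{2n+1}x,$$
and the proof amounts to imposing the identity \eqref{halfderiv1} on a well-chosen list of brackets and comparing coefficients.

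I would begin with the two brackets $[e_1,x]=e_1$ and $[e_2,x]=\lambda e_2$, since $\mathrm{ad}_x$ acts diagonally on the $e_i$. Applying \eqref{halfderiv1} to $[e_1,x]=e_1$ and matching coefficients should give $\alpha_{2n+1}=0$, $\delta_{2n+1}=\alpha_1$, the genericity relation $(2-\lambda)\alpha_2=0$, a recursion $\delta_j=(j-3+\lambda)\alpha_{j+1}$ for $2\le j\le 2n-2$, and --- from the $e_{2n}$-coefficient, where $\mathrm{ad}_x$ carries the anomalous eigenvalue $2n-3+2\lambda$ --- the first instance $(2n-5+2\lambda)\alpha_{2n}=0$ of \eqref{tau1lambda}. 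A subtlety worth flagging: this recursion carries the $\lambda$-dependent factor $(j-3+\lambda)$, while the statement records the $\lambda$-free coefficient $\frac{2j-2n-1}{2}$; the two agree because $(2n-5+2\lambda)\alpha_{j+1}=0$ confines every nonzero $\alpha_{j+1}$ to the locus $\lambda=\frac{5-2n}{2}$, on which $j-3+\lambda=\frac{2j-2n-1}{2}$. Running \eqref{halfderiv1} on $[e_2,x]=\lambda e_2$, supplemented by $[e_3,e_2]=0$, should then eliminate $\beta_1$ and $\beta_{2n+1}$, kill the interior coefficients $\beta_3,\dots,\beta_{2n-2}$, fix $\beta_2=\alpha_1$, leave $\beta_{2n-1},\beta_{2n}$ free subject to $(2n-3-\lambda)\beta_{2n-1}=0$, and produce the relation $\delta_{2n-1}=(3-2n)\beta_{2n}$ responsible for the $(3-2n)\beta_{2n}e_{2n-1}$ term in $\varphi(x)$.

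The core of the argument is the inductive computation of $\varphi(e_i)$ for $3\le i\le 2n-1$ from $e_{i+1}=[e_i,e_1]$. I would prove by induction that
$$\varphi(e_i)=\alpha_1 e_i+\frac{(-1)^{i-1}}{2}\,\alpha_{2n+2-i}\,e_{2n},$$
the base case $i=3$ being the direct expansion of $\varphi([e_2,e_1])$. The diagonal part $\alpha_1 e_i$ propagates exactly as in the $\mathfrak{n}_{n,1}$ case; the genuinely new phenomenon, and the main obstacle, is the $e_{2n}$ tail. In the inductive step $\varphi(e_{i+1})=\frac12\bigl([\varphi(e_i),e_1]+[e_i,\varphi(e_1)]\bigr)$ the term $[e_i,\varphi(e_1)]$ detects the symplectic bracket $[e_i,e_{2n+1-i}]=(-1)^ie_{2n}$ and thereby transfers the coefficient $\alpha_{2n+1-i}$ of $\varphi(e_1)$ into the $e_{2n}$-slot, with a sign governed by $(-1)^i$; keeping track of these signs and of the shifting index $2n+2-i$ across the recursion is where the bookkeeping is most delicate. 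As an independent check one should evaluate \eqref{halfderiv1} directly on the symplectic relations themselves, noting that $e_{2n}$ spans the centre of the nilradical and is $\mathrm{ad}$-invariant, which forces $\varphi(e_{2n})=\alpha_1 e_{2n}$ and must be compatible with the induction.

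It then remains to close the system. Applying \eqref{halfderiv1} to the brackets $[e_i,x]=(i-2+\lambda)e_i$ for $3\le i\le 2n-1$ and comparing the $e_{2n}$-coefficients (using the tail just computed together with $[e_i,e_{2n+1-i}]=(-1)^ie_{2n}$) should yield the remaining constraints $(2n-5+2\lambda)\alpha_i=0$ for $3\le i\le 2n-1$, completing \eqref{tau1lambda}, while the $[e_{2n},x]$ relation leaves $\delta_{2n}$ as the single free parameter of the central direction. Collecting the surviving coefficients $\alpha_1,\alpha_3,\dots,\alpha_{2n},\beta_{2n-1},\beta_{2n},\delta_{2n}$ and substituting the recursions for the $\delta_j$ then assembles exactly the stated form of $\varphi$. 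I expect the only real difficulty to be the sign- and index-bookkeeping in the inductive step for the $e_{2n}$-component; everything else is the same diagonalisation-and-comparison routine already used for the algebras $\mathfrak{s}^{k}_{n,1}$.
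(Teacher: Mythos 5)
Your overall architecture coincides with the paper's (same generators, the brackets $[e_1,x]$, $[e_2,x]$, the induction along $e_{i+1}=[e_i,e_1]$, and the derivation of \eqref{tau1lambda} by confronting the two expressions for the $\delta_j$), but there is a genuine gap in the step where you clean up $\varphi(e_2)$. You claim that imposing \eqref{halfderiv1} on $[e_2,x]=\lambda e_2$, supplemented by $[e_3,e_2]=0$, kills the interior coefficients $\beta_3,\dots,\beta_{2n-2}$ and fixes $\beta_2=\alpha_1$. It does not. The bracket $[e_2,x]$ yields only $(2\lambda-1)\beta_1=0$, $\lambda(\beta_2-\alpha_1)=0$, $\delta_1=(\lambda-1)\beta_3$, $(i-2-\lambda)\beta_i=0$ for $4\le i\le 2n-1$, $\lambda\beta_{2n+1}=0$ and $\delta_{2n-1}=(3-2n)\beta_{2n}$, while $[e_3,e_2]=0$ yields only $\beta_1=0$, $\beta_{2n-2}=0$ and $(\tfrac32+\lambda)\beta_{2n+1}=0$. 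Neither relation constrains $\beta_3$ at all (it enters only through the definition of $\delta_1$), so $\beta_3$ survives for \emph{every} $\lambda$; in addition $\beta_i$ survives whenever $\lambda=i-2$ with $4\le i\le 2n-3$, and $\beta_2=\alpha_1$ is not forced when $\lambda=0$. Since the base case of your induction, $\varphi(e_3)=\alpha_1e_3+\tfrac12\alpha_{2n-1}e_{2n}$, requires exactly these vanishings, the induction cannot start as stated. The missing idea is the eigenvalue-shift argument the paper uses: impose \eqref{halfderiv1} also on $[e_3,x]=(1+\lambda)e_3$, which produces the second family $(i-3-\lambda)\beta_i=0$, $3\le i\le 2n-2$, together with $(1+\lambda)(\beta_2-\alpha_1)=0$; subtracting the two families kills $\beta_i$ for $4\le i\le 2n-2$ and forces $\beta_2=\alpha_1$ for all $\lambda$, leaving $\lambda\beta_3=0$, after which the pair $\{e_4,x\}$ (used with $\delta_1=-\beta_3$) finally gives $\beta_3=0$.

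A second, smaller omission of the same nature concerns $\alpha_2$: from $[e_1,x]$ you only record $(2-\lambda)\alpha_2=0$, which leaves $\alpha_2$ free when $\lambda=2$, yet the statement has no $e_2$-component in $\varphi(e_1)$. The paper eliminates it unconditionally from the vanishing bracket $[e_{2n-1},e_1]=0$ at the top of the filiform chain, which gives $0=-\tfrac12\alpha_2e_{2n}$, hence $\alpha_2=0$. Once these extra brackets are added to your list, the remainder of your plan --- the sign bookkeeping in the $e_{2n}$-tail of the induction, the computation of $\varphi(e_{2n})$ from $[e_2,e_{2n-1}]$, and the extraction of $(2n-5+2\lambda)\alpha_i=0$ by comparing $\delta_j=(j-3+\lambda)\alpha_{j+1}$ with $2\delta_{2n+1-i}=(2n+1-2i)\alpha_{2n+2-i}$ --- goes through exactly as in the paper.
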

\begin{proof}
The algebra $\mathfrak{r}_{2n+1}(\lambda)$ has $e_1, e_2$ and $x$ as generators. We put
\begin{center}
$\varphi(e_1)=\sum\limits_{i=1}^{2n}\alpha_ie_i+\alpha_{2n+1}x,\
\varphi(e_2)=\sum\limits_{i=1}^{2n}\beta_ie_i+\beta_{2n+1}x,\ \varphi(x)=\sum\limits_{i=1}^{2n}\delta_ie_i+\delta_{2n+1}x.$
\end{center}
Now consider the condition of $\frac 1 2$-derivation for the elements $e_1$ and $x:$
\begin{center}
$\varphi([e_1,x])=\frac{1}{2}([\varphi(e_1),x]+[e_1,\varphi(x)])=\frac{1}{2}\Big([\sum\limits_{i=1}^{2n}\alpha_ie_i+\alpha_{2n+1}x,x]+
[e_1,\sum\limits_{i=1}^{2n}\delta_ie_i+\delta_{n+1}x]\Big)$
\end{center}
\begin{center}
$=\frac{1}{2}\Big(\alpha_1e_1+\sum\limits_{i=2}^{2n-1}(i-2+\lambda)\alpha_ie_i+(2n-3+2\lambda)\alpha_{2n}e_{2n}-\sum\limits_{i=3}^{2n-1}\delta_{i-1}e_i +\delta_{2n+1}e_1\Big).$
\end{center}

On the other hand
\begin{center}
$\varphi([e_1,x])=\varphi(e_1)=\sum\limits_{i=1}^{2n}\alpha_ie_i+\alpha_{2n+1}x.$
\end{center}
Comparing coefficients of the basis elements we obtain that
\begin{center}
$\alpha_{2n+1}=0, \ \delta_{2n+1}=\alpha_1, \ (\lambda-2)\alpha_{2}=0,\ (2n-5+ 2\lambda)\alpha_{2n}=0,$
\end{center} \begin{center}
$ \delta_i=(i-3+\lambda)\alpha_{i+1},\ 2\leq i\leq 2n-2.$
\end{center}

Now consider the condition of $\frac 1 2$-derivation for the elements $e_2,x:$
\begin{center}
$\varphi([e_2,x])=\frac{1}{2}([\varphi(e_2),x]+[e_2,\varphi(x)])=\frac{1}{2}\Big([\sum\limits_{i=1}^{2n}\beta_ie_i+\beta_{2n+1}x,x]+[e_2,\sum\limits_{i=1}^{2n}\delta_ie_i +\alpha_{1}x]\Big)$
\end{center}
\begin{center}
$=\frac{1}{2}\Big(\beta_1 e_1+\sum\limits_{i=2}^{2n-1}(i-2+\lambda)\beta_ie_i+(2n-3+2\lambda)\beta_{2n}e_{2n}+\delta_{1}e_3+\delta_{2n-1}e_{2n}+\lambda \alpha_1e_2\Big).$
\end{center}
On the other hand
\begin{center}
$\varphi([e_2,x])=\lambda\varphi(e_2)=\sum\limits_{i=1}^{2n}\lambda\beta_ie_i+\lambda\beta_{2n+1}x.$
\end{center}
Comparing coefficients of the basis elements we obtain that
\begin{equation}\label{taulambda1}
\begin{array}{c}
(2\lambda-1)\beta_1=0,\ \delta_{2n-1}=(3-2n)\beta_{2n},\ \lambda\beta_{2n+1}=0, \ \delta_1=(\lambda-1)\beta_3, \ \lambda(\beta_{2}-\alpha_1)=0,\\[1mm]
(i-2-\lambda)\beta_i=0, \  4\leq i\leq 2n-1.\end{array}
\end{equation}

From the $\frac 1 2$-derivation property (\ref{halfderiv}) we have
\begin{center}
$\varphi(e_3)=\varphi([e_2,e_1])=\frac{1}{2}([\varphi(e_2),e_1]+[e_2,\varphi(e_1)])=\frac{1}{2}([\sum\limits_{i=1}^{2n}\beta_ie_i+\beta_{2n+1}x,e_1]+[e_2,\sum\limits_{i=1}^{2n}\alpha_ie_i])
$
\end{center}
\begin{center}
$=\frac{1}{2}\Big(\sum\limits_{i=3}^{2n-1}\beta_{i-1}e_i-\beta_{2n+1}e_1+\alpha_1e_3+\alpha_{2n-1}e_{2n}\Big).$
\end{center}

Now consider the condition of $\frac 1 2$-derivation for the elements $e_3,x:$
\begin{center}
$\varphi([e_3,x])=\frac{1}{2}([\varphi(e_3),x]+[e_3,\varphi(x)])$
\end{center}
\begin{center}
$=\frac{1}{2}\Big([\frac{1}{2}(\sum\limits_{i=3}^{2n-1}\beta_{i-1}e_i-\beta_{2n+1}e_1+\alpha_1e_3+ \alpha_{2n-1}e_{2n}),x]+
[e_3,\sum\limits_{i=1}^{2n}\delta_ie_i+\alpha_{1}x]\Big)$
\end{center}
\begin{center}
$=\frac{1}{4}\Big(\sum\limits_{i=3}^{2n-1}(i-2+\lambda)\beta_{i-1}e_i-\beta_{2n+1}e_1+(1+\lambda)\alpha_1e_3+(2n-3+2\lambda)\alpha_{2n-1}e_{2n}$
\end{center}
\begin{center}
$+2(1-\lambda)\beta_3e_4-2\delta_{2n-2}e_{2n}+2\alpha_{1}(1+\lambda)e_3\Big).$
\end{center}

On the other hand
\begin{center}
$\varphi([e_3,x])=(1+\lambda)\varphi(e_3)=
(1+\lambda)\frac{1}{2}(\sum\limits_{i=3}^{2n-1}\beta_{i-1}e_i-\beta_{2n+1}e_1+\alpha_1e_3+ \alpha_{2n-1}e_{2n}).$
\end{center}
Comparing coefficients of the basis elements we obtain that
\begin{equation}\label{taulambda2}
\begin{array}{c}
(2\lambda-1)\beta_{2n+1}=0,\ (1+\lambda)(\beta_{2}-\alpha_1)=0,
\ 2\delta_{2n-2}=(2n-5)\alpha_{2n-1},\\[1mm]
(i-3-\lambda)\beta_i=0, \  3\leq i\leq 2n-2.\end{array}
\end{equation}

Comparing the ratios of \eqref{taulambda1} and \eqref{taulambda2} we obtain the following restrictions
\begin{center}
    $ (2\lambda-1)\beta_1=0, \ \beta_2=\alpha_1, \ \lambda\beta_3=0, \beta_i=0, \ 4\leq i\leq 2n-2, \ (2n-3-\lambda)\beta_{2n-1}=0, \ \beta_{2n+1}=0.$
\end{center}

\begin{center}
     $ \delta_1=-\beta_3, \ \delta_i=(i-3+\lambda)\alpha_{i+1}, 2\leq i\leq 2n-2, \ \delta_{2n-1}=(3-2n)\beta_{2n}, \ \delta_{2n+1}=\alpha_1,$
\end{center}

\begin{center}
$ \alpha_{2n+1}=0, \ (\lambda-2)\alpha_2=0, \ (2n-5+2\lambda)\alpha_{2n-1}=0.$
\end{center}

Thus, we have $$\varphi(e_3)=\frac{1}{2}(2\alpha_1e_3+\beta_3e_4+ \alpha_{2n-1}e_{2n}).$$

From the $\frac 1 2$-derivation property (\ref{halfderiv}) we have
\begin{center}
$\varphi(e_4)=\varphi([e_3,e_1])=\frac{1}{2}([\varphi(e_3),e_1]+[e_3,\varphi(e_1)])$
\end{center}
\begin{center}
$=\frac{1}{2}\Big([\frac{1}{2}(2\alpha_1e_3+\beta_3e_4+ \alpha_{2n-1}e_{2n}),e_1]+[e_3,\sum\limits_{i=1}^{2n}\alpha_ie_i]\Big)
=\frac{1}{4}(4\alpha_1e_4+\beta_{3}e_5-2\alpha_{2n-2}e_{2n}).$
\end{center}

If we use the $\frac 12$-derivation property for the pairs $\{e_2,e_3\}$ and $\{e_4,x\}$, we get $\beta_1=\beta_3=0.$

Furthermore, by applying the induction and the $\frac 1 2$-derivation property (\ref{halfderiv}) for $4\leq i\leq n-1,$ we derive

\begin{center}
$\varphi(e_{i+1})=\varphi([e_i,e_1])=\frac{1}{2}([\varphi(e_i),e_1]+[e_i,\varphi(e_1)])$
\end{center}
\begin{center}
$=\frac{1}{2}\Big([\frac{1}{2^{i-2}}(2^{i-2}\alpha_1e_i+(-1)^{i-1}2^{i-3}\alpha_{2n+2-i}e_{2n},e_1]+[e_i,\sum\limits_{j=1}^{n}\alpha_je_j]\Big)$
\end{center}\begin{center}
$
=\frac{1}{2^{i-1}}\Big(2^{i-1}\alpha_1e_{i+1}+(-1)^{i}2^{i-2}\alpha_{2n+1-i}e_{2n}\Big).$
\end{center}

The equality $\varphi([e_{2n-1},e_1])=\frac{1}{2}([\varphi(e_{2n-1}),e_1]+[e_{2n-1},\varphi(e_1)])$ gives $\alpha_2=0.$

From the $\frac 1 2$-derivation property (\ref{halfderiv}) we have
\begin{center}
$\varphi([e_{i},x])=\frac{1}{2}([\varphi(e_i),x]+[e_i,\varphi(x)])$
\end{center}
\begin{center}
$=\frac{1}{2}([\frac{1}{2^{i-2}}(2^{i-2}\alpha_1e_i+(-1)^{i-1}2^{i-3}\alpha_{2n+2-i}e_{2n},x]+[e_i,\sum\limits_{i=1}^{2n}\delta_ie_i+\alpha_1x])$
\end{center}
\begin{center}
$=\frac{1}{2}\Big((i-2+\lambda)\alpha_1e_i+\frac{(-1)^{i-1}}{2}(2n-3+2\lambda)\alpha_{2n+2-i}e_{2n}+(-1)^i\delta_{2n+1-i}e_{2n}+(i-2+\lambda)\alpha_1e_i\Big).$
\end{center}

On the other hand
\begin{center}
$\varphi([e_i,x])=(i-2+\lambda)\varphi(e_i)=
(i-2+\lambda)(\frac{1}{2^{i-2}}(2^{i-2}\alpha_1e_i+(-1)^{i-1}2^{i-3}\alpha_{2n+2-i}e_{2n}).$
\end{center}
Comparing coefficients of the basis elements we obtain that
\begin{center}
$2\delta_{2n+1-i}=(2n+1-2i)\alpha_{2n+2-i}, \  3\leq i\leq 2n-1.$
\end{center}

This completes the proof of the theorem. \end{proof}

\begin{theorem}\label{2halfderiv2e} Any $\frac 1 2$-derivation $\varphi$ of the algebras $\mathfrak{r}_{2n+1}(2-n,\varepsilon)$ has the form
$$\varphi(e_i)=ae_i, \ 1\leq i\leq 2n, \ i\neq 2, \ \varphi(e_2)=a e_2+be_{2n},\ \varphi(x)=(3-2n)b e_{2n-1}+ce_{2n}+a x.$$
\end{theorem}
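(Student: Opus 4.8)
The plan is to follow the same scheme as in the proof of Theorem \ref{2halfderiv12}, since $\mathfrak{r}_{2n+1}(2-n,\varepsilon)$ coincides with the value $\lambda=2-n$ of the family $\mathfrak{r}_{2n+1}(\lambda)$ except for the extra summand $\varepsilon e_{2n}$ in $[e_1,x]$. First I would record that $e_1,e_2,x$ generate the algebra: each $e_{i}$ with $3\le i\le 2n-1$ is recovered from the chain $e_{i}=[e_{i-1},e_1]$, while $e_{2n}$ arises from $[e_2,e_{2n-1}]=e_{2n}$. Hence it suffices to determine $\varphi$ on $e_1,e_2,x$, writing
$$\varphi(e_1)=\sum_{i=1}^{2n}\alpha_ie_i+\alpha_{2n+1}x,\quad \varphi(e_2)=\sum_{i=1}^{2n}\beta_ie_i+\beta_{2n+1}x,\quad \varphi(x)=\sum_{i=1}^{2n}\delta_ie_i+\delta_{2n+1}x,$$
and then propagating via \eqref{halfderiv1}.

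I would next impose the $\frac12$-derivation identity successively on the pairs $\{e_1,x\}$, $\{e_2,x\}$, $\{e_1,e_2\}$ and $\{e_3,x\}$, comparing coefficients of the basis vectors exactly as in Theorem \ref{2halfderiv12}. The decisive simplification is the arithmetic of the eigenvalues at $\lambda=2-n$: the constraints that in the general family read $(\lambda-2)\alpha_2=0$ and $(2n-5+2\lambda)\alpha_i=0$ degenerate into $-n\,\alpha_2=0$ and $-\alpha_i=0$, and since $n\ge 3$ these force $\alpha_2=0$ and $\alpha_i=0$ for $3\le i\le 2n$. Likewise the relations $\lambda\beta_3=0$, $(2n-3-\lambda)\beta_{2n-1}=0$, $(2\lambda-1)\beta_1=0$ and $(i-2-\lambda)\beta_i=0$ collapse at $\lambda=2-n$ into $\beta_1=\beta_3=\beta_{2n-1}=0$ and $\beta_i=0$ for $4\le i\le 2n-2$. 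What survives is precisely $\varphi(e_1)=\alpha_1e_1$ and $\varphi(e_2)=\alpha_1e_2+\beta_{2n}e_{2n}$, matching the statement with $a=\alpha_1$ and $b=\beta_{2n}$.

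The genuinely new ingredient, compared with Theorem \ref{2halfderiv12}, is the term $\varepsilon e_{2n}$: applying \eqref{halfderiv1} to $\{e_1,x\}$ now yields $\varphi(e_1)+\varepsilon\,\varphi(e_{2n})$ on the left, so the $e_{2n}$-coefficient equation couples back to the not-yet-determined value of $\varphi(e_{2n})$. I would first extract the $\varepsilon$-free consequences of this equation ($\alpha_{2n+1}=0$, $\delta_{2n+1}=\alpha_1$), then run the induction along $e_{i+1}=[e_i,e_1]$ together with the alternating relations $[e_i,e_{2n+1-i}]=(-1)^ie_{2n}$ to establish $\varphi(e_i)=\alpha_1e_i$ for $3\le i\le 2n$, in particular $\varphi(e_{2n})=\alpha_1e_{2n}$. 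The identity on $\{e_i,x\}$ then fixes $\varphi(x)=(3-2n)\beta_{2n}e_{2n-1}+\delta_{2n}e_{2n}+\alpha_1x$, giving $c=\delta_{2n}$. Finally, substituting $\varphi(e_{2n})=\alpha_1e_{2n}$ back into the $\{e_1,x\}$ equation, one checks that the contributions $\varepsilon\alpha_1e_{2n}$ on the two sides cancel and only $\alpha_{2n}=0$ remains, so $\varepsilon$ imposes no extra constraint and introduces no new parameter for either $\varepsilon=\pm1$.

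The main obstacle I expect is organizational rather than conceptual: carefully tracking the $e_{2n}$-components throughout the induction, since the second bracket family feeds a multiple of $e_{2n}$ at every step, and confirming that the $\varepsilon$-deformation closes up consistently for both signs without enlarging the solution space. Everything else reduces to the bounded, mechanical coefficient comparison already carried out in the preceding theorems.
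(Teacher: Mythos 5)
Your overall scheme (three generators, coefficient comparison, induction along $e_{i+1}=[e_i,e_1]$, deferring the $e_{2n}$-component of the $\{e_1,x\}$ equation) is the paper's scheme, but the logical order you propose is circular, and the circularity sits exactly at the point where this theorem differs from Theorem \ref{2halfderiv12}. The restrictions $(\lambda-2)\alpha_2=0$ and $(2n-5+2\lambda)\alpha_i=0$ of \eqref{tau1lambda} cannot be ``specialized at $\lambda=2-n$'': the algebra $\mathfrak{r}_{2n+1}(2-n,\varepsilon)$ is \emph{not} the member $\lambda=2-n$ of the family, and those restrictions were derived in Theorem \ref{2halfderiv12} precisely from the pair $\{e_1,x\}$ with the undeformed bracket $[e_1,x]=e_1$ (combined with the relations $2\delta_{2n+1-i}=(2n+1-2i)\alpha_{2n+2-i}$ coming from $\{e_i,x\}$). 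In the deformed algebra the left-hand side of that equation is $\varphi(e_1)+\varepsilon\varphi(e_{2n})$, and since $\varphi(e_{2n})$ is a priori an arbitrary vector, \emph{every} coefficient equation of $\{e_1,x\}$ is contaminated, not only the $e_{2n}$-one; in particular your ``$\varepsilon$-free consequences'' $\alpha_{2n+1}=0$ and $\delta_{2n+1}=\alpha_1$ are not free either (the paper obtains $\alpha_{2n+1}=0$ from $\{e_3,x\}$ instead). So the vanishing $\alpha_i=0$, $3\le i\le 2n$, claimed in your second paragraph needs an equation that is not yet usable, while your induction claim $\varphi(e_i)=\alpha_1e_i$ presupposes exactly this vanishing: without it the induction only yields $\varphi(e_i)=\alpha_1e_i+\frac{(-1)^{i-1}}{2}\alpha_{2n+2-i}e_{2n}$. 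Each half of your argument feeds on the other, and the internal inconsistency shows it: you first assert $\alpha_{2n}=0$ is already forced, then say that at the end ``only $\alpha_{2n}=0$ remains.''

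The repair is a strict reordering, and it is the paper's proof. First exhaust the $\varepsilon$-free pairs $\{e_2,x\}$, $\{e_1,e_2\}$, $\{e_3,x\}$, which give $\beta_1=\beta_3=\beta_{2n+1}=0$, $\beta_i=0$ for $4\le i\le 2n-1$, $\beta_2=\alpha_1$ and $\alpha_{2n+1}=0$. Then run the induction \emph{keeping} the correction terms $\frac{(-1)^{i-1}}{2}\alpha_{2n+2-i}e_{2n}$; the key point is that $[e_2,e_{2n-1}]=e_{2n}$ nevertheless gives the exact value $\varphi(e_{2n})=\alpha_1e_{2n}$ (the corrections cancel there without knowing any $\alpha_j=0$), and $\{e_{2n-1},e_1\}$ gives $\alpha_2=0$. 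Next, $\{e_i,x\}$ yields $2\delta_t=(2t-2n-1)\alpha_{t+1}$. Only now can $\{e_1,x\}$ be opened: the contributions $\varepsilon\alpha_1e_{2n}$ cancel on both sides, as you predicted, but the resulting comparison kills \emph{all} of $\alpha_3,\dots,\alpha_{2n}$ (each coefficient equation reduces to $-\alpha_i=0$, which is the arithmetic you attributed to the specialization), not merely $\alpha_{2n}$. Your intuition about where $\varepsilon$ matters is correct, but as written the proposal does not go through without this restructuring.
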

\begin{proof}
The algebra $\mathfrak{r}_{2n+1}(2-n,\varepsilon)$ has $e_1, e_2$ and $x$ as generators. We put
\begin{center}
$\varphi(e_1)=\sum\limits_{i=1}^{2n}\alpha_ie_i+\alpha_{2n+1}x,\
\varphi(e_2)=\sum\limits_{i=1}^{2n}\beta_ie_i+\beta_{2n+1}x,\ \varphi(x)=\sum\limits_{i=1}^{2n}\delta_ie_i+\delta_{2n+1}x.$
\end{center}

Now consider the condition of $\frac 1 2$-derivation for the elements $e_2,x:$
\begin{center}
$\varphi([e_2,x])=\frac{1}{2}([\varphi(e_2),x]+[e_2,\varphi(x)])=
\frac{1}{2}\Big([\sum\limits_{i=1}^{2n}\beta_ie_i+\beta_{2n+1}x,x]+
[e_2,\sum\limits_{i=1}^{2n}\delta_ie_i+\alpha_{1}x]\Big)$
\end{center}
\begin{center}
$=\frac{1}{2}\Big(\beta_1 (e_1+\varepsilon e_{2n})+\sum\limits_{i=2}^{2n-1}(i-n)\beta_ie_i+\beta_{2n}e_{2n}+\delta_{1}e_3+\delta_{2n-1}e_{2n}+(2-n)\delta_{2n+1}e_2\Big).$
\end{center}
On the other hand
\begin{center}
$\varphi([e_2,x])=(2-n)\varphi(e_2)=\sum\limits_{i=1}^{2n}(2-n)\beta_ie_i+(2-n)\beta_{2n+1}x.$
\end{center}
Comparing coefficients of the basis elements we obtain that
\begin{center}
$\beta_1=\beta_{2n+1}=0,\ \delta_{2n+1}=\beta_{2},\ \delta_1=(1-n)\beta_{3}, \ \delta_{2n-1}=(3-2n)\beta_{2n}, \ \beta_i=0, \  4\leq i\leq 2n-1.$
\end{center}

From the $\frac 1 2$-derivation property (\ref{halfderiv}) we have
\begin{center}
$\varphi(e_3)=\varphi([e_2,e_1])=\frac{1}{2}([\varphi(e_2),e_1]+[e_2,\varphi(e_1)])$
\end{center}
\begin{center}
$=\frac{1}{2}\Big([\beta_2e_2+\beta_3e_3+ \beta_{2n}e_{2n},e_1]+[e_2,\sum\limits_{i=1}^{2n}\alpha_ie_i+\alpha_{2n+1}x]\Big)
$
\end{center}
\begin{center}
$=\frac{1}{2}\Big(\beta_{2}e_3+\beta_3e_4+\alpha_1e_3+\alpha_{2n-1}e_{2n}+(2-n)\alpha_{2n+1}e_2\Big).$
\end{center}

Now consider the condition of $\frac 1 2$-derivation for the elements $e_3,x:$
\begin{center}
$\varphi([e_3,x])=\frac{1}{2}([\varphi(e_3),x]+[e_3,\varphi(x)])=
\frac{1}{2}\Big([\frac{1}{2}((2-n)\alpha_{2n+1}e_2+(\alpha_1+\beta_2)e_3+\beta_3e_4+\alpha_{2n-1}e_{2n}),x]+
[e_3,\sum\limits_{i=1}^{2n}\delta_ie_i+\beta_{2}x]\Big)=$
\end{center}
\begin{center}
$\frac{1}{4}\Big((2-n)^2\alpha_{2n+1}e_2+(3-n)(\alpha_1+\beta_2)e_3+(4-n)\beta_3e_4+\alpha_{2n-1}e_{2n}+2\delta_1e_4-2\delta_{2n-2}e_{2n}+2(3-n)\beta_{2}e_3\Big).$
\end{center}

On the other hand
\begin{center}
$\varphi([e_3,x])=(3-n)\varphi(e_3)=
\frac{1}{2}(3-n)\Big((2-n)\alpha_{2n+1}e_2+(\alpha_1+\beta_2)e_3+\beta_3e_4+\alpha_{2n-1}e_{2n}\Big).$
\end{center}
Comparing coefficients of the basis elements we obtain that
\begin{center}
$\alpha_{2n+1}=0,\ \beta_{2}=\alpha_1, \  \beta_3=\delta_1=0, \
\ 2\delta_{2n-2}=(2n-5)\alpha_{2n-1}.$
\end{center}

Thus, we obtain

$$\varphi(e_3)=\alpha_1e_3+\frac{1}{2}\alpha_{2n-1}e_{2n}.$$

By applying the induction and the $\frac 1 2$-derivation property (\ref{halfderiv}) for $3\leq i\leq 2n-2,$ we derive

\begin{center}
$\varphi(e_{i+1})=\varphi([e_i,e_1])=\frac{1}{2}([\varphi(e_i),e_1]+[e_i,\varphi(e_1)])$
\end{center}
\begin{center}
$=\frac{1}{2}([\alpha_1e_i+\frac{(-1)^{i-1}}{2}\alpha_{2n+2-i}e_{2n},e_1]+[e_i,\sum\limits_{j=1}^{n}\alpha_je_j])
=\alpha_1e_{i+1}+\frac{(-1)^{i}}{2}\alpha_{2n+1-i}e_{2n}.$
\end{center}

\begin{center}
$\varphi(e_{2n})=\varphi([e_2,e_{2n-1}])=\frac{1}{2}([\varphi(e_2),e_{2n-1}]+[e_2,\varphi(e_{2n-1})])$
\end{center}
\begin{center}
$=\frac{1}{2}([\alpha_1e_2+\beta_{2n}e_{2n}, e_{2n-1}]+[e_2,\alpha_1e_{2n-1}+\frac12\alpha_3e_{2n}])
=\alpha_1e_{2n}.$
\end{center}

The equality $\varphi([e_{2n-1},e_1])=\frac{1}{2}([\varphi(e_{2n-1}),x]+[e_{2n-1},\varphi(e_1)])$ derives $\alpha_2=0.$

From the $\frac 1 2$-derivation property (\ref{halfderiv}) we have
\begin{center}
$\varphi([e_{i},x])=\frac{1}{2}([\varphi(e_i),x]+[e_i,\varphi(x)])$
\end{center}
\begin{center}
$=\frac{1}{2}\Big([\alpha_1e_i+\frac{(-1)^{i-1}}{2}\alpha_{2n+2-i}e_{2n},x]+[e_i,\sum\limits_{i=2}^{2n}\delta_ie_i+\alpha_1x]\Big)$
\end{center}
\begin{center}
$=\frac{1}{2}\Big((i-n)\alpha_1e_i+\frac{(-1)^{i-1}}{2}\alpha_{2n+2-i}e_{2n}+(-1)^i\delta_{2n+1-i}e_{2n}+(i-n)\alpha_1e_i\Big).$
\end{center}

On the other hand
\begin{center}
$\varphi([e_i,x])=(i-n)\varphi(e_i)=
(i-n)\Big(\alpha_1e_i+\frac{(-1)^{i-1}}{2}\alpha_{2n+2-i}e_{2n}\Big).$
\end{center}
Comparing coefficients of the basis elements we obtain that
\begin{center}
$2\delta_{t}=(2t-2n-1)\alpha_{t+1}, \  2\leq t\leq 2n-2.$
\end{center}

Now consider the condition of $\frac 1 2$-derivation for the elements $e_1,x:$
\begin{center}
$\varphi([e_1,x])=\frac{1}{2}([\varphi(e_1),x]+[e_1,\varphi(x)])=
\frac{1}{2}\Big([\alpha_1e_1+\sum\limits_{i=3}^{2n}\alpha_ie_i,x]+
[e_1,\sum\limits_{i=2}^{2n}\delta_ie_i+\alpha_{1}x]\Big)$
\end{center}
\begin{center}
$=\frac{1}{2}\Big(\alpha_1 (e_1+\varepsilon e_{2n})+\sum\limits_{i=3}^{2n-1}(i-n)\alpha_ie_i+\alpha_{2n}e_{2n}+\sum\limits_{t=3}^{2n-1}\delta_{t-1}e_{t}+\alpha_{1}(e_1+\varepsilon e_{2n})\Big).$
\end{center}
On the other hand
\begin{center}
$\varphi([e_1,x])=\varphi(e_1)+\varepsilon \varphi(e_{2n})=\alpha_1e_1+\sum\limits_{i=3}^{2n}\alpha_ie_i+\varepsilon\alpha_{1}e_{2n}.$
\end{center}
Comparing coefficients of the basis elements we obtain that $$\alpha_{t}=0,\  3\leq i\leq 2n.$$

This completes the proof of the theorem. \end{proof}

\begin{theorem}\label{2halfderiv3} Any $\frac 1 2$-derivation $\varphi$ of the algebra $\mathfrak{r}_{2n+1}(\lambda_{5},\ldots,\lambda_{2n-1})$ has the form
$$\varphi(e_i)=a e_i, \ 1\leq i\leq 2n, \ i\neq 2, \ \varphi(e_2)=a e_2+be_{2n}, \ \varphi(x)=ce_{2n}+ax.$$
\end{theorem}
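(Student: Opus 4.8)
The plan is to compute the $\frac{1}{2}$-derivation $\varphi$ on the three generators $e_1, e_2, x$ of $\mathfrak{r}_{2n+1}(\lambda_5,\ldots,\lambda_{2n-1})$ and then propagate to the remaining basis elements via the defining property \eqref{halfderiv1}, exactly as in the proofs of Theorems \ref{2halfderiv12} and \ref{2halfderiv2e}. First I would write
\[
\varphi(e_1)=\sum_{i=1}^{2n}\alpha_ie_i+\alpha_{2n+1}x,\quad
\varphi(e_2)=\sum_{i=1}^{2n}\beta_ie_i+\beta_{2n+1}x,\quad
\varphi(x)=\sum_{i=1}^{2n}\delta_ie_i+\delta_{2n+1}x,
\]
and impose the $\frac{1}{2}$-derivation identity on the pairs $\{e_2,x\}$, $\{e_1,x\}$, $\{e_2,e_1\}$, and $\{e_3,x\}$, comparing coefficients against $[e_2,x]=e_2+\sum_k\lambda_{\cdots}e_{\cdots}$, $[e_1,x]=e_1$, and so on. The bracket data for this algebra is very close to the $\lambda=1$ specialization of $\mathfrak{r}_{2n+1}(\lambda)$ in Theorem \ref{2halfderiv12} (since $[e_{2+i},x]=e_{2+i}+\cdots$ and $[e_{2n},x]=2e_{2n}$ correspond to the $\lambda=1$ eigenvalues), so I expect the same algebraic machinery to apply with the extra parameters $\lambda_{2k+1}$ entering the relations as additional constraints.

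The key steps, in order, are as follows. Using the pair $\{e_2,x\}$ I would extract the vanishing of $\beta_1,\beta_{2n+1}$ and most off-diagonal $\beta_i$, together with $\delta_{2n+1}=\beta_2$ and expressions for $\delta_1,\delta_{2n-1}$; the eigenvalue $1$ on $e_2$ forces these as in \eqref{taulambda1}. Next, from $\{e_1,x\}$ (diagonal eigenvalues $i-1$ on $e_i$) I would obtain $\alpha_{2n+1}=0$, $\delta_{2n+1}=\alpha_1$, hence $\beta_2=\alpha_1$, and recursion relations $\delta_i=(i-2)\alpha_{i+1}$. Then computing $\varphi(e_3)=\varphi([e_2,e_1])$ and imposing the identity on $\{e_3,x\}$ should kill $\beta_3$ and the surviving higher $\beta_i$. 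The crucial difference from the earlier theorems is the presence of the parameters $\lambda_{2k+1}$ in $[e_{2+i},x]$: when I impose the derivation property on the generic pair $\{e_i,x\}$ for $2\le i\le 2n-2$, the sum $\sum_k\lambda_{2k+1}e_{2k+1+i}$ produces mixing terms whose coefficients, compared against the diagonal action, should force $\alpha_i=0$ for $3\le i\le 2n$. This collapse is exactly what makes the final answer so sparse: only $\alpha_1=a$, the single parameter $\beta_{2n}=b$ on $e_2$, and $\delta_{2n}=c$ survive.

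After the generator analysis, I would propagate by induction along the chain $e_{i+1}=[e_i,e_1]$ for $3\le i\le 2n-2$, using the already-established $\varphi(e_1)=a e_1$ (all other $\alpha_i$ having vanished) to conclude $\varphi(e_i)=a e_i$ for each $i\neq 2$ in the filiform part, and separately $\varphi(e_{2n})=\varphi([e_2,e_{2n-1}])=a e_{2n}$ as in Theorem \ref{2halfderiv2e}. Finally the expression for $\varphi(x)$ reduces to $\delta_{2n}e_{2n}+a x$ once all $\delta_i$ for $i<2n$ are shown to vanish (they are tied to the $\alpha_{i+1}$ that were killed), together with the $(3-2n)b$-type term coming from $\delta_{2n-1}=(3-2n)\beta_{2n}$; I would verify that the $\lambda_{2k+1}$ constraints do not revive any $\delta_i$, leaving precisely $\varphi(x)=c e_{2n}+a x$.

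The step I expect to be the main obstacle is the bookkeeping of the parameter sum $\sum_k\lambda_{2k+1}e_{2k+1+i}$ when applying the derivation identity to $\{e_i,x\}$: keeping track of which $\alpha_i$ each $\lambda_{2k+1}$ multiplies, and verifying that the resulting linear system genuinely forces $\alpha_i=0$ for all $3\le i\le 2n$ rather than merely constraining combinations of them. Because the first nonvanishing $\lambda_{2k+1}$ is normalized to $1$, at least one such constraint is guaranteed to be nontrivial, which is what drives the collapse; confirming that the cascade of constraints propagates to every index is the delicate part.
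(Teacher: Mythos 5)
Your overall skeleton (evaluate $\varphi$ on the generators $e_1,e_2,x$, impose the $\frac12$-derivation identity on pairs, then propagate along $e_{i+1}=[e_i,e_1]$ and $e_{2n}=[e_2,e_{2n-1}]$) agrees with the paper's, but the plan rests on a misreading of the multiplication table of $\mathfrak{r}_{2n+1}(\lambda_5,\ldots,\lambda_{2n-1})$, and the concrete mechanisms you invoke would fail. In this algebra $[e_1,x]=0$, not $[e_1,x]=e_1$, and the $x$-action on the nilradical is $[e_i,x]=e_i+(\lambda\text{-terms})$ for $2\le i\le 2n-1$ with $[e_{2n},x]=2e_{2n}$; it is therefore \emph{not} close to the $\lambda=1$ member of the family $\mathfrak{r}_{2n+1}(\lambda)$, whose eigenvalues $1,\lambda,\lambda+1,\ldots,\lambda+2n-3,2\lambda+2n-3$ are generically pairwise distinct. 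Two consequences. First, your claimed output of the pair $\{e_1,x\}$ --- namely $\alpha_{2n+1}=0$, $\delta_{2n+1}=\alpha_1$ and $\delta_i=(i-2)\alpha_{i+1}$ --- is wrong: since $[e_1,x]=0$, that pair yields $\alpha_2=\alpha_{2n}=0$ and $\delta_i=\alpha_{i+1}+\sum_k\lambda_{2k+1}\alpha_{i-2k+2}$ (relation \eqref{alpha}), while $\alpha_{2n+1}$ and $\delta_{2n+1}$ do not even appear; the paper obtains $\alpha_{2n+1}=0$ from $\{e_3,x\}$ and $\delta_{2n+1}=\beta_2=\alpha_1$ from $\{e_2,x\}$ together with $\{e_{2n},x\}$. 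Second, because all diagonal eigenvalues on $e_2,\ldots,e_{2n-1}$ are equal to $1$, the eigenvalue-comparison mechanism of \eqref{taulambda1}, which produces factors $(i-2-\lambda)\beta_i=0$, degenerates here to $0=0$: the pair $\{e_2,x\}$ cannot kill ``most off-diagonal $\beta_i$'' as you assert. The paper kills $\beta_1$ and $\beta_i$, $3\le i\le 2n-2$, from the annihilator-type relations $\varphi([e_2,e_i])=0$ coming from the zero products $[e_2,e_i]=0$, an ingredient absent from your outline.

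The engine you propose for the collapse of the $\alpha_i$ also has the causality backwards. You expect the mixing sums $\sum_k\lambda_{2k+1}e_{2k+1+i}$ to force $\alpha_i=0$, with the normalization of the first nonzero $\lambda_{2k+1}$ guaranteeing a nontrivial constraint; but the family allows all $\lambda_{2k+1}=0$, and your argument would then prove nothing, whereas the theorem still holds. In the paper the collapse is independent of the $\lambda$'s: the pairs $\{e_{2n-i},x\}$, which exploit the brackets $[e_j,e_{2n+1-j}]=(-1)^je_{2n}$ (e.g.\ $[e_{2n-1},\varphi(x)]$ picks up $-\delta_2e_{2n}$), force $\delta_2=0$ and then successively the lower $\delta_i=0$; feeding this into \eqref{alpha}, where the $\lambda$-terms involve only already-killed $\alpha$'s, gives $\alpha_3=0$, then $\alpha_4=0$, and so on. The $\lambda$-terms are bookkeeping to be carried along, not the source of the equations. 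So precisely at the step you flag as ``the delicate part,'' your linear system is empty, and the proof needs the two ingredients your plan never uses: the zero products $[e_2,e_i]=0$ and the symplectic-type brackets $[e_j,e_{2n+1-j}]=\pm e_{2n}$.
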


\begin{proof}

It is easy to see that  $\mathfrak{r}_{2n+1}(\lambda_{5},\ldots,\lambda_{2n-1})$ has three generators. We use these generators to calculate $\frac12$-derivation.
$$\varphi(e_1)=\sum\limits_{i=1}^{2n}\alpha_ie_i+\alpha_{2n+1}x,\quad \varphi(e_2)=\sum\limits_{i=1}^{2n}\beta_ie_i+\beta_{2n+1}x, \quad \varphi(x)=\sum\limits_{i=1}^{2n}\delta_ie_i+\delta_{2n+1}x.$$

Now consider the condition of $\frac 1 2$-derivation for the elements $e_2$ and $e_1:$
\begin{center}
$\varphi(e_3)=\varphi([e_2,e_1])=\frac{1}{2}([\varphi(e_2),e_1]+[e_2,\varphi(e_1)])$
\end{center}
\begin{center}
$=\frac{1}{2}([\sum\limits_{i=1}^{2n}\beta_ie_i+\beta_{2n+1}x,e_1]+[e_2,\sum\limits_{i=1}^{2n}\alpha_ie_i]+\alpha_{2n+1}x)$
\end{center}
\begin{center}
$
=\frac{1}{2}(\alpha_{2n+1}e_2+(\alpha_1+\beta_2)e_3+\sum\limits_{i=2}^{n-1}\beta_{2i-1}e_{2i}+\sum\limits_{i=2}^{n-1}(\beta_{2i}+\alpha_{2n+1}\lambda_{2i+1})e_{2i+1}+\alpha_{2n-1}e_{2n}).$
\end{center}

Therefore, from $\varphi([e_3,x])=\frac{1}{2}([\varphi(e_3),x]+[e_3,\varphi(x)]),$ we obtain $\alpha_{2n+1}=0.$ Thus, we have
\begin{center}
$\varphi(e_3)=\frac{1}{2}((\alpha_1+\beta_2)e_3+\sum\limits_{i=4}^{2n-1}\beta_{i-1}e_{i}+\alpha_{2n-1}e_{2n}).$
\end{center}

We prove the following equality for $3\leq i\leq 2n-1$ by induction:
\begin{center}
$\varphi(e_i)=\frac{1}{2^{i-2}}\Big((2^{i-2}-1)\alpha_{1}+\beta_2\Big)e_{i}+\frac{1}{2^{i-2}}\sum\limits_{t=i+1}^{2n-1}\beta_{t-i+2}e_t+\frac{(-1)^{i-1}}{2}\alpha_{2n-i+2}e_{2n}.$
\end{center}
If $i=3$, the relationship holds according to the above equality. Now, we prove that it is true for $i$ and $i+1$.
By considering the condition of $\frac 1 2$-derivation for the elements $e_1, e_i$ we have
\begin{center}
$\varphi(e_{i+1})=\varphi([e_i,e_1])=\frac{1}{2}([\varphi(e_i),e_1]+[e_i,\varphi(e_1)])$
\end{center}
\begin{center}
$=\frac{1}{2}\Big([\frac{1}{2^{i-2}}\Big((2^{i-2}-1)\alpha_{1}+\beta_2\Big)e_{i}+\frac{1}{2^{i-2}}\sum\limits_{t=i+1}^{2n-1}\beta_{t-i+2}e_t+\frac{(-1)^{i}}{2}\alpha_{2n-i+2}e_{2n},e_1]+[e_i, \sum\limits_{k=1}^{2n}\alpha_ke_k]\Big)$
\end{center}
\begin{center}
$=\frac{1}{2}\Big(\frac{1}{2^{i-2}}\Big((2^{i-2}-1)\alpha_{1}+\beta_2\Big)e_{i+1}+\frac{1}{2^{i-2}}\sum\limits_{t=i+1}^{2n-2}\beta_{t-i+2}e_{t+1}+\alpha_{1}e_{i+1}+(-1)^{i}\alpha_{2n-i+1}e_{2n}\Big)$
\end{center}
\begin{center}
$=\frac{1}{2^{i-1}}\Big((2^{i-1}-1)\alpha_{1}+\beta_2\Big)e_{i+1}+\frac{1}{2^{i-1}}\sum\limits_{t=i+2}^{2n-1}\beta_{t-i+1}e_t+\frac{(-1)^{i}}{2}\alpha_{2n-i+1}e_{2n}.$
\end{center}

Now, consider the condition of $\frac 1 2$-derivation for the elements $e_2, e_{2n-1}:$
\begin{center}
$\varphi(e_{2n})=\varphi([e_2,e_{2n-1}])=\frac{1}{2}([\varphi(e_2),e_{2n-1}]+[e_2,\varphi(e_{2n-1})])$
\end{center}
\begin{center}
$=\frac{1}{2}\Big([\sum\limits_{i=1}^{2n}\beta_ie_i+\beta_{2n+1}x,e_{2n-1}]+[e_2,\frac{1}{2^{2n-3}}\Big((2^{2n-3}-1)\alpha_{1}+\beta_2\Big)e_{2n-1}+\frac{(-1)^{2n-1}}{2}\alpha_{3}e_{2n}]\Big)
$
\end{center}
\begin{center}
$=\frac{1}{2}\Big(\beta_{2}e_{2n}-\beta_{2n+1}e_{2n-1}+\frac{1}{2^{2n-3}}\Big((2^{2n-3}-1)\alpha_{1}+\beta_2\Big)e_{2n}\Big)$
\end{center}
\begin{center}
$=-\frac{1}{2}\beta_{2n+1}e_{2n-1}+\frac{1}{2^{2n-2}}\Big((2^{2n-3}-1)\alpha_{1}+(2^{2n-3}+1)\beta_2\Big)e_{2n}.$
\end{center}
Thus, we obtain that
$$\varphi(e_{2n})=-\frac{1}{2}\beta_{2n+1}e_{2n-1}+\frac{1}{2^{2n-2}}\Big((2^{2n-3}-1)\alpha_{1}+(2^{2n-3}+1)\beta_2\Big)e_{2n}.
$$

Using the property of the $\frac 1 2$-derivation for the products $[e_{2n},x]=2e_{2n}, \ [e_{2},e_i]=0, \ 3\leq i\leq 2n-2$ and $[e_2,x]=e_2+\lambda_{5}e_{5}+\lambda_{7}e_{7}+\dots+\lambda_{2n-1}e_{2n-1}$, we have

\[\begin{array}{lll}
[e_{2n},x]=2e_{2n}, &\Rightarrow&  \beta_{2n+1}=0, \ \delta_{2n+1}=\frac{(2^{2n-3}-1)\alpha_{1}+(2^{2n-3}+1)\beta_2}{2^{2n-2}},\\[3mm]
[e_{2},e_i]=0, &\Rightarrow&   \beta_{1}=\beta_{i}=0, \ 3\leq i\leq 2n-2,\\[3mm]
[e_2,x]=e_2+\sum\limits_{t=3}^{n}\lambda_{t}e_{t}, &\Rightarrow&   \delta_{2n+1}=\beta_2, \ \delta_1=\beta_{2n-1}=0,\ \beta_2=\alpha_1,\\[1mm]
& & \delta_{2n-1}=\alpha_{3}\lambda_{2n-1}+\alpha_{5}\lambda_{2n-3}+\dots+\alpha_{2n-3}\lambda_{5}.
\end{array}\]

By considering the condition of $\frac 1 2$-derivation for the elements $e_1$ and $x$ we have
\begin{equation}\label{alpha}
\alpha_2=\alpha_{2n}=0, \ \delta_i=\sum\limits_{k=2}^{\frac{[i-1]}{2}}\lambda_{2k+1}\alpha_{i-2k+2}+\alpha_{i+1}, \ 2\leq i\leq 2n-2.
\end{equation}

Now, we consider the condition of $\frac 1 2$-derivation for the elements $e_{2n-1}$ and $x:$
\begin{center}
$\varphi(e_{2n-1})=\varphi([e_{2n-1},x])=\frac{1}{2}([\varphi(e_{2n-1}),x]+[e_{2n-1},\varphi(x)])$
\end{center}
\begin{center}
$=\frac{1}{2}\Big([\alpha_1e_{2n-1}+\frac{\alpha_3}{2}e_{2n},x]+[e_{2n-1},\sum\limits_{i=2}^{2n}\delta_ie_i+\alpha_1x]\Big)=\frac{1}{2}\Big(\alpha_{1}e_{2n-1}+\alpha_{3}e_{2n}-\delta_2e_{2n}+\alpha_1e_{2n-1}\Big).$
\end{center}
On the other hand
\begin{center}
$\varphi([e_{2n-1},x])=\varphi(e_{2n-1})=\alpha_1e_1+\frac{\alpha_3}{2}e_{2n}.$
\end{center}
Comparing coefficients of the basis elements we obtain that $\delta_{2}=0.$ Considering the ratio \eqref{alpha} we have $\alpha_3=0$. Having consistently considered the condition of $\frac 1 2$-derivation for the elements $e_{2n-i}$ and $x, \ 2\leq i\leq 2n-3$ and taking into account the relations \eqref{alpha}, we have

$$\alpha_{k+1}=\beta_k=0, \ \ 2\leq k\leq 2n-2.$$

This completes the proof of the theorem. \end{proof}

\begin{theorem}\label{2halfderiv2} Any $\frac 1 2$-derivation $\varphi$ of the algebra $\mathfrak{r}_{2n+2}$ has the form
$$\varphi(e_i)=ae_i, \quad 1\leq i\leq 2n, \quad \varphi(x_1)=(2n+1)be_{2n}+ax_1, \quad \varphi(x_2)=2be_{2n}+ax_2.$$
\end{theorem}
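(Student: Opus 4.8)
The plan is to follow the generator-based template used in the preceding theorems. From the multiplication table, $\mathfrak{r}_{2n+2}$ is generated as a Lie algebra by $e_1,e_2,x_1,x_2$: the relations $[e_i,e_1]=e_{i+1}$ recover $e_3,\dots,e_{2n-1}$, and $[e_2,e_{2n-1}]=(-1)^2e_{2n}=e_{2n}$ recovers $e_{2n}$, while $x_1,x_2$ lie outside the derived algebra. So I would begin by writing
$$\varphi(e_1)=\sum_{i=1}^{2n}\alpha_ie_i+\alpha_{2n+1}x_1+\alpha_{2n+2}x_2, \quad \varphi(e_2)=\sum_{i=1}^{2n}\beta_ie_i+\beta_{2n+1}x_1+\beta_{2n+2}x_2,$$
together with analogous expansions $\varphi(x_1)=\sum_i\gamma_ie_i+\gamma_{2n+1}x_1+\gamma_{2n+2}x_2$ and $\varphi(x_2)=\sum_i\delta_ie_i+\delta_{2n+1}x_1+\delta_{2n+2}x_2$, all coefficients unknown.

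The decisive structural feature is that $\mathfrak{r}_{2n+2}$ carries a two-dimensional torus $\langle\mathrm{ad}_{x_1},\mathrm{ad}_{x_2}\rangle$ under which $e_1$, the generic $e_i$ with $2\le i\le 2n-1$, and $e_{2n}$ occupy the pairwise distinct weight spaces $(1,0)$, $(i,1)$ and $(2n+1,2)$. First I would impose the $\frac12$-derivation identity on the four pairs $\{e_1,x_1\}$, $\{e_1,x_2\}$, $\{e_2,x_1\}$, $\{e_2,x_2\}$. Since $[e_1,x_2]=0$ and the weight of $e_1$ is separated from every other weight, these relations force $\varphi(e_1)=\alpha_1e_1+\sum_{i\ge 3}\alpha_ie_i$ with no $e_2,x_1,x_2$ components, and the analogous computation on $e_2$ gives $\beta_2=\alpha_1$ with $\varphi(e_2)=\beta_2e_2$ plus a tail in $e_{2n}$. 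The same pairs simultaneously pin down the $e_i$-parts of $\varphi(x_1),\varphi(x_2)$ against the $\alpha$'s.

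Next I would compute $\varphi(e_3)=\varphi([e_2,e_1])$ from the identity and then establish, by induction on $i$ using $[e_i,e_1]=e_{i+1}$ for $3\le i\le 2n-2$, that $\varphi(e_i)=\alpha_1e_i$ up to a controlled $e_{2n}$-tail; the relation $[e_2,e_{2n-1}]=e_{2n}$ then delivers $\varphi(e_{2n})=\alpha_1e_{2n}$. Finally, with $\varphi(e_i)=a\,e_i$ in hand, I would feed this back into the weight equations arising from $[e_i,x_1]=ie_i$ and $[e_i,x_2]=e_i$, which (as in the proof of Theorem~\ref{2halfderiv3} for $\mathfrak{r}_{2n+1}(\lambda_5,\dots,\lambda_{2n-1})$) collapse the remaining off-diagonal coefficients $\alpha_i$ ($i\ge 3$) to zero and leave only $\varphi(x_1)=\gamma_{2n}e_{2n}+ax_1$, $\varphi(x_2)=\delta_{2n}e_{2n}+ax_2$.

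I expect the main obstacle to be the bookkeeping of the $e_{2n}$-components, since $e_{2n}$ is the socle of the nilradical and may a priori appear in $\varphi$ of almost every basis element. Two points require care. First, under $\mathrm{ad}_{x_2}$ one has $2\cdot\mathrm{wt}_2(e_2)=\mathrm{wt}_2(e_{2n})$ (both equal $2$), so the $x_2$-relation alone does \emph{not} annihilate the $e_{2n}$-tail of $\varphi(e_2)$; only by combining the $x_1$- and $x_2$-weight equations, while tracking the contributions $[e_2,e_{2n-1}]=e_{2n}$ coming from the $e_{2n-1}$-parts of $\varphi(x_1),\varphi(x_2)$, does one force these tails to vanish for every $e_i$ with $i<2n$. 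Second, the ratio between the surviving $e_{2n}$-coefficients of $\varphi(x_1)$ and $\varphi(x_2)$ is invisible to the individual $x_j$-relations; it is produced by applying the $\frac12$-derivation identity to $[x_1,x_2]=0$, whose $e_{2n}$-component reads $2\gamma_{2n}=(2n+1)\delta_{2n}$. Writing $\delta_{2n}=2b$ then yields $\gamma_{2n}=(2n+1)b$, which is exactly the asserted form $\varphi(x_1)=(2n+1)be_{2n}+ax_1$, $\varphi(x_2)=2be_{2n}+ax_2$.
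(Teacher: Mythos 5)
Your route is, in all essentials, the paper's route: expand $\varphi$ on the generators $e_1,e_2,x_1,x_2$, impose the identity on the four pairs $\{e_1,x_1\}$, $\{e_1,x_2\}$, $\{e_2,x_1\}$, $\{e_2,x_2\}$, get $\varphi(e_3)$ from $[e_2,e_1]$, induct along $[e_i,e_1]=e_{i+1}$, recover $\varphi(e_{2n})$ from $[e_2,e_{2n-1}]$, kill the off-diagonal coefficients of $\varphi(e_1)$ via the brackets $[e_i,x_2]$, and close with $[x_1,x_2]=0$; the torus/weight language is only a repackaging of the same computations. However, one step fails as you state it: the claim that combining the $[e_2,x_1]$- and $[e_2,x_2]$-equations, while tracking the $e_{2n-1}$-parts of $\varphi(x_1),\varphi(x_2)$, forces the $e_{2n}$-tail of $\varphi(e_2)$ to vanish. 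It does not. Write $\varphi(e_2)=ae_2+\beta e_{2n}$ and (in your notation) let $\gamma_{2n-1},\delta_{2n-1}$ be the $e_{2n-1}$-coefficients of $\varphi(x_1),\varphi(x_2)$. The $e_{2n}$-component of the identity for $[e_2,x_1]=2e_2$ reads $2\beta=\tfrac12\bigl((2n+1)\beta+\gamma_{2n-1}\bigr)$, i.e.\ $\gamma_{2n-1}=(3-2n)\beta$, and that for $[e_2,x_2]=e_2$ reads $\beta=\tfrac12\bigl(2\beta+\delta_{2n-1}\bigr)$, i.e.\ $\delta_{2n-1}=0$: together they only tie $\gamma_{2n-1}$ to $\beta$ and leave $\beta$ free.

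That this cannot be repaired within the pairs $\{e_i,x_j\}$ and $\{e_i,e_j\}$ is shown by the map $\psi$ with $\psi(e_2)=e_{2n}$, $\psi(x_1)=(3-2n)e_{2n-1}$ and $\psi=0$ on all other basis vectors: it satisfies the $\tfrac12$-derivation identity on every defining bracket of $\mathfrak{r}_{2n+2}$ except $[x_1,x_2]=0$ (for $[e_2,x_1]$ one gets $\tfrac12\bigl((2n+1)+(3-2n)\bigr)e_{2n}=2e_{2n}$, for $[e_2,x_2]$ one gets $\tfrac12(2e_{2n})=e_{2n}$, and every other bracket gives $0=0$). What actually kills $\beta$ is the relation you hold in reserve for the ratio: the identity applied to $[x_1,x_2]=0$ has \emph{two} relevant components, the $e_{2n}$-component $2\gamma_{2n}=(2n+1)\delta_{2n}$ that you record, and the $e_{2n-1}$-component $\gamma_{2n-1}-(2n-1)\delta_{2n-1}=0$, which together with $\delta_{2n-1}=0$ gives $\gamma_{2n-1}=0$ and hence $\beta=\gamma_{2n-1}/(3-2n)=0$, since $n\geq 3$. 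So the repair is local — read off both components of the $[x_1,x_2]$ equation — but as written your plan never eliminates the tail of $\varphi(e_2)$, which is precisely part of what the theorem asserts. (Your instinct that this tail is the delicate point is sound; indeed the paper's own proof records only the $e_{2n}$-component at this last step, leaving the same detail implicit.)
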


\begin{proof}
The algebra $\mathfrak{r}_{2n+2}$ has $e_1, e_2, x_1$ and $x_2$ as generators. We put
\[\begin{array}{ll}
\varphi(e_1)=\sum\limits_{i=1}^{2n}\alpha_ie_i+\alpha_{2n+1}x_1+\alpha_{2n+2}x_2,& \varphi(e_2)=\sum\limits_{i=1}^{2n}\beta_ie_i+\beta_{2n+1}x_1+\beta_{2n+2}x_2,\\[1mm] \varphi(x_1)=\sum\limits_{i=1}^{2n}\delta_ie_i+\delta_{2n+1}x_1+\delta_{2n+2}x_2, & \varphi(x_2)=\sum\limits_{i=1}^{2n}\gamma_ie_i+\gamma_{2n+1}x_1+\gamma_{2n+2}x_2.
\end{array}\]

Now consider the condition of $\frac 1 2$-derivation for the elements $e_1,x_1:$
$$\varphi([e_1,x_1])=\frac{1}{2}([\varphi(e_1),x_1]+[e_1,\varphi(x_1)])$$

$$=\frac{1}{2}\Big([\sum\limits_{i=1}^{2n}\alpha_ie_i+\alpha_{2n+1}x_1+\alpha_{2n+2}x_2,x_1]+
[e_1,\sum\limits_{i=1}^{2n}\delta_ie_i+\delta_{2n+1}x_1+\delta_{2n+2}x_2]\Big)$$
$$=\frac{1}{2}\Big((\alpha_1+\delta_{2n+1})e_1+2\alpha_2e_2+\sum\limits_{i=3}^{2n-1}(i\alpha_i-\delta_{i-1})e_i+(2n+1)\alpha_{2n}e_{2n}\Big).$$
On the other hand
\begin{center}
$\varphi([e_1,x])=\varphi(e_1)=\sum\limits_{i=1}^{2n}\alpha_ie_i+\alpha_{2n+1}x_1+\alpha_{2n+2}x_2.$
\end{center}
Comparing coefficients of the basis elements we obtain that
\begin{center}
$\alpha_{2n}=\alpha_{2n+1}=\alpha_{2n+2}=0, \ \delta_{2n+1}=\alpha_1,  \ \delta_{i-1}=(i-2)\alpha_{i},\ 3\leq i\leq 2n-1.$
\end{center}

Now consider the condition of $\frac 1 2$-derivation for the elements $e_2$ and $x_1:$
\begin{center}
$\varphi([e_2,x_1])=\frac{1}{2}([\varphi(e_2),x_1]+[e_2,\varphi(x_1)])
$
\end{center}
\begin{center}
$=\frac{1}{2}\Big([\sum\limits_{i=1}^{2n}\beta_ie_i+\beta_{2n+1}x_1+\beta_{2n+2}x_2,x_1]+
[e_2,\sum\limits_{i=1}^{2n}\delta_ie_i+\delta_{2n+1}x_1+\delta_{2n+2}x_2]\Big)$
\end{center}
\begin{center}
$=\frac{1}{2}\Big(\beta_1e_1+(2\beta_2+2\delta_{2n+1}+\delta_{2n+2})e_2+(\beta_3+\delta_1)e_3+\sum\limits_{i=4}^{2n-1}i\beta_ie_i+((2n+1)\beta_{2n}+\delta_{2n-1})e_{2n}\Big).$
\end{center}
On the other hand
\begin{center}
$\varphi([e_2,x_1])=2\varphi(e_2)=2\Big(\sum\limits_{i=1}^{2n}\beta_ie_i+\beta_{2n+1}x_1+\beta_{2n+2}x_2\Big).$
\end{center}
Comparing coefficients of the basis elements we obtain that
\begin{center}
$\beta_1=\beta_{2n+1}=\beta_{2n+2}=0, \ \delta_1=\beta_3, \  (i-4)\beta_{i}=0,\ 4\leq i\leq 2n-1,$
\end{center}
\begin{center}
$\delta_{2n-1}=(3-2n)\beta_{2n}, \ \delta_{2n+2}=2\beta_2-2\alpha_1.$
\end{center}

Now consider the condition of $\frac 1 2$-derivation for the elements $e_1,x_2:$
\[0=\varphi([e_1,x_2])=\frac{1}{2}([\varphi(e_1),x_2]+[e_1,\varphi(x_2)])\]
\[=\frac{1}{2}\Big([\sum\limits_{i=1}^{2n-1}\alpha_ie_i,x_1]+
[e_1,\sum\limits_{i=1}^{2n}\gamma_ie_i+\gamma_{2n+1}x_1+\gamma_{2n+2}x_2]\Big)=\] \[=\frac{1}{2}\Big((\gamma_{2n+1}e_1+\alpha_2e_2+\sum\limits_{i=3}^{2n-1}(\alpha_i-\gamma_{i-1})e_i\Big).\]
Comparing coefficients of the basis elements we obtain that
\begin{center}
$\alpha_{2}=\gamma_{2n+1}=0,  \ \gamma_{i-1}=\alpha_{i},\ 3\leq i\leq 2n-1.$
\end{center}

Now consider the condition of $\frac 1 2$-derivation for the elements $e_2$ and $x_2:$
\begin{center}
$\varphi([e_2,x_2])=\frac{1}{2}([\varphi(e_2),x_2]+[e_2,\varphi(x_2)])
$
\end{center}
\begin{center}
$=\frac{1}{2}\Big([\beta_2e_2+\beta_3e_3+\beta_4e_4+\beta_{2n}e_{2n},x_2]+
[e_2,\gamma_1e_1+\sum\limits_{i=2}^{2n-2}\alpha_{i+1}e_i+\gamma_{2n-1}e_{2n-1}+\gamma_{2n}e_{2n}+\gamma_{2n+2}x_2]\Big)$
\end{center}
\begin{center}
$=\frac{1}{2}\Big((\beta_2+\gamma_{2n+2})e_2+(\beta_3+\gamma_1)e_3+(2\beta_{2n}+\gamma_{2n-1})e_{2n}\Big).$
\end{center}
On the other hand
\begin{center}
$\varphi([e_2,x_2])=\varphi(e_2)=\beta_2e_2+\beta_3e_3+\beta_4e_4+\beta_{2n}e_{2n}.$
\end{center}
Comparing coefficients of the basis elements we obtain that
\begin{center}
$\gamma_{2n+2}=\beta_{2}, \ \gamma_1=\beta_3, \  \beta_{4}=0,\ \gamma_{2n-1}=0.$\end{center}

From the $\frac 1 2$-derivation property (\ref{halfderiv}) we have
\begin{center}
$\varphi(e_3)=\varphi([e_2,e_1])=\frac{1}{2}([\varphi(e_2),e_1]+[e_2,\varphi(e_1)])$
\end{center}
\begin{center}
$=\frac{1}{2}\Big([\beta_2e_2+\beta_3e_3+\beta_{2n}e_{2n},e_1]+[e_2,\alpha_1e_1+\sum\limits_{i=3}^{2n-1}\alpha_ie_i]\Big)
=\frac{1}{2}\Big((\alpha_1+\beta_2)e_3+\beta_3e_4+\alpha_{2n-1}e_{2n}\Big).$
\end{center}

Therefore, from $\varphi([e_3,x_2])=\frac{1}{2}([\varphi(e_3),x_2]+[e_3,\varphi(x_2)]),$ we obtain
$$\beta_2=\alpha_1, \ \beta_3=\gamma_{2n-2}=0.$$

We prove the following equality for $3\leq i\leq 2n-1$ by induction.
\begin{center}
$\varphi(e_i)=\alpha_1e_i+\frac{(-1)^{i-1}}{2}\alpha_{2n+2-i}e_{2n}.$
\end{center}
If $i=3$, the relationship is fulfilled according to the above equality. Now we prove that it is true for $i$ and for $i+1$.
Now consider the condition of $\frac 1 2$-derivation for the elements $e_i,e_1:$
\begin{center}
$\varphi(e_{i+1})=\varphi([e_i,e_1])=\frac{1}{2}([\varphi(e_i),e_1]+[e_i,\varphi(e_1)])$
\end{center}
\begin{center}
$=\frac{1}{2}\Big([\alpha_1e_i+\frac{(-1)^{i-1}}{2}\alpha_{2n+2-i}e_{2n},e_1]+[e_i,\alpha_1e_1+\sum\limits_{i=3}^{2n-1}\alpha_ie_i]\Big)$
\end{center}
\begin{center}
$=\frac{1}{2}\Big(\alpha_1e_{i+1}+\alpha_1e_{i+1}+(-1)^{i}\alpha_{2n+1-i}e_{2n})\Big)=\alpha_1e_{i+1}+\frac{(-1)^{i}}{2}\alpha_{2n+1-i}e_{2n}$
\end{center}
and
\begin{center}
$\varphi(e_{2n})=\varphi([e_2,e_{2n-1}])=\frac{1}{2}([\varphi(e_2),e_{2n-1}]+[e_2,\varphi(e_{2n-1})])$
\end{center}
\begin{center}
$=\frac{1}{2}\Big([\alpha_1e_2+\beta_{2n}e_{2n},e_{2n-1}]+[e_2,\alpha_1e_{2n-1}+\frac{1}{2}\alpha_{3}e_{2n}]\Big)=\alpha_1e_{2n}.$
\end{center}

From the $\frac 1 2$-derivation property (\ref{halfderiv}) for $3\leq i\leq 2n-1$ we have
\begin{center}
$\varphi([e_{i},x_2])=\frac{1}{2}([\varphi(e_i),x_2]+[e_i,\varphi(x_2)])$
\end{center}
\begin{center}
$=\frac{1}{2}\Big([\alpha_1e_i+\frac{(-1)^{i-1}}{2}\alpha_{2n+2-i}e_{2n},x_2]+[e_i,\sum\limits_{i=2}^{2n-2}\alpha_{i+1}e_i+\gamma_{2n}e_{2n}+\alpha_{1}x_2]\Big)=$
\end{center}
\begin{center}
$=\frac{1}{2}\Big(\alpha_1e_i+(-1)^{i-1}\alpha_{2n+2-i}e_{2n}+(-1)^{i}\alpha_{2n+2-i}e_{2n}+\alpha_1e_i\Big).$
\end{center}

On the other hand
\begin{center}
$\varphi([e_i,x])=\varphi(e_i)=
\alpha_1e_i+\frac{(-1)^{i-1}}{2}\alpha_{2n+2-i}e_{2n}.$
\end{center}
Comparing coefficients of the basis elements we obtain that
\begin{center}
$\alpha_{i}=0, \  3\leq i\leq 2n-1.$
\end{center}

By using the property of $\frac{1}{2}$-derivation for the product $[x_1,x_2]=0$, we have
\begin{center}$2\delta_{2n}=(2n+1)\gamma_{2n}.$\end{center}

This completes the proof of the theorem. \end{proof}

\section{Transposed Poisson structure on solvable Lie algebras with filiform nilradical}

In this section, we give the description of all transposed Poisson algebra structures on solvable Lie
algebras with naturally graded filiform nilradical.

\begin{theorem} Let $(\mathfrak{s}^{1}_{4,1}(\beta), \cdot, [-,-])$ be a transposed Poisson algebra structure defined on the Lie algebra $\mathfrak{s}^{1}_{4,1}(\beta)$. Then the multiplication of $(\mathfrak{s}^{1}_{4,1}(\beta), \cdot)$ has the following form:

${\bf TP}_1(\mathfrak{s}^{1}_{4,1}(1)): \ e_1\cdot e_1=\alpha_{1}e_3+\alpha_{2}e_4, \ e_1\cdot x=\alpha_{2}e_3+\alpha_{3}e_4, \ x\cdot x=\alpha_{3}e_3+ \alpha_{4}e_4;$

${\bf TP}_2(\mathfrak{s}^{1}_{4,1}(2)): \
\begin{cases}
e_1\cdot e_1=e_2+\alpha_{1}e_3+\alpha_{2}e_4, \
e_1\cdot e_2=\alpha_{3}e_3+\frac{1}{2}\alpha_{1}\alpha_{3}e_4, \\[1mm] 
e_1\cdot e_3=\frac{1}{2}\alpha_{3}e_4, \ 
e_2\cdot e_2=\frac{1}{2}\alpha_{3}^2e_4,  \\[1mm]
e_1\cdot x=\alpha_{3}e_1+\alpha_{1}e_2+ 2\alpha_{2}e_3 +\alpha_{4}e_4, \ e_3\cdot x =\alpha_{3}e_3+\frac{1}{2}\alpha_{1}\alpha_{3}e_4, \\[1mm]
e_2\cdot x=\alpha_{3}e_2+\alpha_{1}\alpha_{3}e_3 +\alpha_{2}\alpha_{3}e_4,  \ e_4\cdot x =\alpha_{3}e_4, \\[1mm]
x\cdot x=\alpha_{1}\alpha_{3}e_1+2\alpha_{2}e_2 +2\alpha_{4}e_3+\alpha_{5}e_4+\alpha_{3}x; 
\end{cases}$

${\bf TP}_3(\mathfrak{s}^{1}_{4,1}(2)): \
\begin{cases}
e_1\cdot e_1=e_3+\alpha_{1}e_4, \ e_1\cdot e_2=\alpha_{2}e_4, \\[1mm]
e_1\cdot x=e_2+2\alpha_{1}e_3+\alpha_{3}e_4, \ e_2\cdot x=2\alpha_{2}e_3+2\alpha_1\alpha_{2}e_4,\\[1mm]
e_3\cdot x =\alpha_{2}e_4,\ x\cdot x=2\alpha_{2}e_1+ 2\alpha_{1}e_2+2\alpha_{3}e_3+ \alpha_{4}e_4; 
\end{cases}$

${\bf TP}_4(\mathfrak{s}^{1}_{4,1}(2)): \
e_1\cdot e_1=e_4, \
e_1\cdot x=2e_3+\alpha_{1}e_4,\
e_2\cdot x=\alpha_{2}e_4,\
x\cdot x=2e_2+2\alpha_{1}e_3+\alpha_{3}e_4;$

${\bf TP}_5(\mathfrak{s}^{1}_{4,1}(2)): \
\begin{cases}
e_1\cdot e_2=\alpha_{1}e_4, \
e_2\cdot e_2=\alpha_{2}e_4, \
e_1\cdot x=\alpha_{3}e_4,\\[1mm]
e_2\cdot x=2\alpha_{1}e_3+\alpha_{4}e_4,\
e_3\cdot x =\alpha_{1}e_4,\
x\cdot x=2\alpha_{1}e_1+2\alpha_{3}e_3+\alpha_{5}e_4;
\end{cases}$

${\bf TP}_6(\mathfrak{s}^{1}_{4,1}(\beta)): \
\begin{array}{l}
e_1\cdot e_1=\alpha_{1}e_4, \ e_1\cdot x=\beta\alpha_{1}e_3+\alpha_{2}e_4,\\[1mm] 
x\cdot x=(\beta-1)\beta\alpha_{1}e_2+\beta\alpha_{2}e_3+ \alpha_{3}e_4, \ \beta\neq 1,2;   
\end{array}$

where it is taken into account that the transposed Poisson algebra has its products with respect to the bracket $[-,-]$, and the remaining products are equal to zero.

\end{theorem}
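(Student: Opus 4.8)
The plan is to invoke Lemma~\ref{lemma1}: a bilinear operation $\cdot$ is a transposed Poisson structure on $\mathfrak{s}^{1}_{4,1}(\beta)$ exactly when it is commutative, associative, and each left multiplication $L_z\colon w\mapsto z\cdot w$ is a $\frac12$-derivation. Since $z\mapsto L_z$ is linear, I would write $L_z$ for every basis element $z\in\{e_1,e_2,e_3,e_4,x\}$ in the normal form supplied by Theorem~\ref{halfderiv1t} (the $n=4$ case), attaching to each $z$ its own scalars $\alpha_i(z),\beta_3(z),\beta_4(z),\delta_4(z)$ subject to the stated restrictions $(\beta-2)\alpha_2(z)=(\beta-2)\beta_3(z)=(2-\beta)\beta_4(z)=0$. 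Commutativity then reads $L_z(w)=L_w(z)$, and associativity reads $L_zL_w=L_{z\cdot w}$ as operators; these two systems are what must be solved.

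First I would extract the rigid consequences of the $\frac12$-derivation shape before touching associativity. Every $\frac12$-derivation sends $e_4$ to a scalar multiple of itself, so $L_z(e_4)=\alpha_1(z)e_4$; comparing this with $L_{e_4}(z)$ through commutativity forces the image of $L_{e_4}$ to lie in $\langle e_4\rangle$, which kills its scalar part $\alpha_1(e_4)=0$, yields $e_4\cdot e_4=0$, and leaves only $e_4\cdot x$ possibly nonzero. Running the same comparison down the defining chain $e_3=[e_2,e_1]$, $e_4=[e_3,e_1]$ — that is, matching $L_z(e_3)$ with $L_{e_3}(z)$ and $L_z(e_2)$ with $L_{e_2}(z)$ — collapses the bulk of the free scalars and expresses all products through a handful of parameters together with the single scalar $\alpha_1:=\alpha_1(x)$. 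The crucial branching enters precisely here: the restrictions $(\beta-2)\alpha_2=(\beta-2)\beta_3=(2-\beta)\beta_4=0$ are vacuous when $\beta=2$ (where the $\frac12$-derivation space is largest) but force $\alpha_2=\beta_3=\beta_4=0$ otherwise, and a second degeneration occurs at $\beta=1$, where the coefficient $(\beta-1)\alpha_3$ in $\varphi(x)$ vanishes. This dictates the split into $\beta=1$ (giving ${\bf TP}_1$), $\beta=2$ (giving the rich family ${\bf TP}_2$--${\bf TP}_5$), and $\beta\neq 1,2$ (giving ${\bf TP}_6$).

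With the linear commutativity constraints solved, I would then impose associativity to obtain the quadratic relations among the surviving parameters. These are what pin the remaining products: relations such as $(e_1\cdot e_1)\cdot e_1$, $(e_1\cdot e_2)\cdot e_1$, and the like force $e_1\cdot e_3$, $e_2\cdot e_2$, $e_2\cdot x$, $e_3\cdot x$ to be the specific expressions in $\alpha_1,\alpha_2,\alpha_3$ recorded in the statement, and similarly fix the $e_1\cdot x$ and $x\cdot x$ rows. Finally I would reduce the resulting families modulo the automorphism group of $\mathfrak{s}^{1}_{4,1}(\beta)$ to extract the listed normal forms, and note in passing that \eqref{eq:inter0} fails, so the structures are genuinely non-Poisson.

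I expect the main obstacle to be the $\beta=2$ case. There the $\frac12$-derivation space is maximal, so commutativity and associativity together leave several genuinely independent parameters, and the combined linear--quadratic system must be solved and then sorted into the four distinct normal forms ${\bf TP}_2$--${\bf TP}_5$. Separating these orbits requires a careful case distinction on which leading products vanish — for instance whether $e_1\cdot e_1$ retains its $e_2$-component, whether $e_1\cdot e_1$ instead begins at $e_3$, or whether $e_1\cdot e_1$ degenerates to $\langle e_4\rangle$ — followed by a normalization via automorphisms to bring each branch to the stated representative.
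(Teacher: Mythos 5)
Your proposal is correct and follows essentially the same route as the paper's proof: invoke Lemma~\ref{lemma1}, substitute the $n=4$ normal form from Theorem~\ref{halfderiv1t} for each multiplication operator, solve the linear commutativity constraints $\varphi_z(w)=\varphi_w(z)$, split into the cases $\beta=1$, $\beta=2$, $\beta\neq1,2$ dictated by the restrictions, and in the $\beta=2$ branch combine the quadratic associativity relations with a change-of-basis normalization to separate ${\bf TP}_2$--${\bf TP}_5$. The only cosmetic difference is ordering (the paper normalizes $\alpha_2$ by a basis change before imposing associativity, while you impose associativity first and quotient by automorphisms afterwards), which does not change the substance of the argument.
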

\begin{proof} Let $(\mathfrak{s}^{1}_{4,1}(\beta), \cdot, [-,-])$ be a transposed Poisson algebra structure defined on the Lie algebra $\mathfrak{s}^{1}_{4,1}(\beta)$. Then
for any element of $x \in \mathfrak{s}^{1}_{4,1}(\beta) $, we have that the operator of multiplication $\varphi_x(y) = x \cdot y$ is a $\frac12$-derivation. Hence, for $1 \leq i \leq 4$ we derive by Theorem \ref{halfderiv1t}:
$$\begin{array}{lll}
\varphi_{e_i}(e_1)=\alpha_{i,1}e_1+\alpha_{i,2}e_2+\alpha_{i,3}e_3+\alpha_{i,4}e_4, \\[1mm]
\varphi_{e_i}(e_2)=\alpha_{i,1}e_2+\beta_{i,3}e_3+\beta_{i,4}e_4, \ \varphi_{e_i}(e_3)=\alpha_{i,1}e_3+\frac{1}{2}\beta_{i,3}e_4, \ \varphi_{e_i}(e_4)=\alpha_{i,1}e_4, \\[1mm]
\varphi_{e_i}(x)=\beta_{i,3}e_1+(\beta-1)\alpha_{i,3}e_2+\beta\alpha_{i,4}e_3+\gamma_{i,4}e_4+\alpha_{i,1}x,\\[1mm]
\varphi_{x}(e_1)=\alpha_{x,1}e_1+\alpha_{x,2}e_2+\alpha_{x,3}e_3+\alpha_{x,4}e_4, \\[1mm]
\varphi_{x}(e_2)=\alpha_{x,1}e_2+\beta_{x,3}e_3+\beta_{x,4}e_4, \ \varphi_{x}(e_3)=\alpha_{x,1}e_3+\frac{1}{2}\beta_{x,3}e_4, \ \varphi_{x}(e_4)=\alpha_{x,1}e_4, \\[1mm]
\varphi_{x}(x)=\beta_{x,3}e_1+(\beta-1)\alpha_{x,3}e_2+\beta\alpha_{x,4}e_3+\gamma_{x,4}e_4+\alpha_{x,1}x,\\[1mm]
\end{array}$$
with restrictions
$$\begin{array}{l}
(\beta-2)\alpha_{x,2}=(\beta-2)\beta_{x,3}=(2-\beta)\beta_{x,4}=0,\\[1mm]
(\beta-2)\alpha_{i,2}=(\beta-2)\beta_{i,3}=(2-\beta)\beta_{i,4}=0, \  1\leq i\leq 4;\end{array}$$

Considering the property $\varphi_{x}(y)=x\cdot y=y\cdot x=\varphi_{y}(x)$, we obtain the following restrictions:

 $$\begin{array}{lll}
    \{e_1, e_2\} & \Rightarrow & \alpha_{2,1}=0, \ \alpha_{2,2}=\alpha_{1,1},\ \alpha_{2,3}=\beta_{1,3}, \ \alpha_{2,4}=\beta_{1,4},\\[1mm]
    \{e_1, e_3\} & \Rightarrow & \alpha_{3,1}=0, \ \alpha_{3,2}=0,\ \alpha_{3,3}=\alpha_{1,1}, \ \alpha_{3,4}=\frac{1}{2}\beta_{1,3},  \\[1mm]
    \{e_1, e_4\} & \Rightarrow & \alpha_{4,1}=0, \ \alpha_{4,2}=0,\ \alpha_{4,3}=0, \ \alpha_{4,4}=\alpha_{1,1},  \\[1mm]
 \{e_2, e_3\} & \Rightarrow & \beta_{3,3}=0, \ \beta_{3,4}=\frac{1}{2}\beta_{2,3},  \\[1mm]
 \{e_2, e_4\} & \Rightarrow & \beta_{4,3}=0, \ \beta_{4,4}=0,  \\[1mm]
 \{e_1, x\} & \Rightarrow & \alpha_{1,1}=0,\ \alpha_{x,1}=\beta_{1,3}, \ \alpha_{x,2}=(\beta-1)\alpha_{1,3}, \ \alpha_{x,3}=\beta\alpha_{1,4}, \ \alpha_{x,4}=\gamma_{1,4},  \\[1mm]
 \{e_2, x\} & \Rightarrow & \beta_{2,3}=0, \ (\beta-2)\beta_{1,3}=0, \ \beta_{x,3}=\beta\beta_{1,4}, \ \beta_{x,4}=\gamma_{2,4},  \\[1mm]
 \{e_3, x\} & \Rightarrow & (\beta-2)\beta_{1,3}=0, \ \gamma_{3,4}=\frac{1}{2}\beta\beta_{1,4},  \\[1mm]
 \{e_4, x\} & \Rightarrow & \gamma_{4,4}=\beta_{1,3}.  \\[1mm]
    \end{array}$$

Thus, the following table of multiplications of the algebra gives us a transposed Poisson algebra structure $(\mathfrak{s}^{1}_{4,1}(\beta), \cdot, [-,-])$:
$$ \begin{array}{l}
e_1\cdot e_1=\alpha_{2}e_2+\alpha_{3}e_3+\alpha_{4}e_4, \ e_1\cdot e_2=\alpha_{5}e_3+\alpha_{6}e_4, \ e_1\cdot e_3=\frac{1}{2}\alpha_{5}e_4, \ e_2\cdot e_2=\alpha_{7}e_4, \\[1mm]
e_1\cdot x=\alpha_{5}e_1+(\beta-1)\alpha_{3}e_2+ \beta\alpha_{4}e_3+\alpha_{8}e_4, \
e_2\cdot x=\alpha_{5}e_2+2\alpha_{6}e_3+\alpha_{9}e_4,\\[1mm]
e_3\cdot x =\alpha_{5}e_3+\alpha_{6}e_4, \ e_4\cdot x =\alpha_{5}e_4,\ x\cdot x=2\alpha_{6}e_1+(\beta-1)\beta\alpha_{4}e_2+\beta\alpha_{8}e_3+ \alpha_{1}e_4+\alpha_{5}x,
\end{array}$$
with restrictions
$$(\beta-2)(\beta-1)\alpha_{3}=(2-\beta)\alpha_{9}=(\beta-2)\alpha_{2}=(\beta-2)\alpha_{5}=(2-\beta)\alpha_{6}=(\beta-2)\alpha_{7}=0. $$

We have the following cases.

\begin{enumerate}
 \item If $\beta=1,$ then we get  $\alpha_2=\alpha_5=\alpha_6=\alpha_7=\alpha_9=0,$ and we have the algebra ${\bf TP}_1(\mathfrak{s}^{1}_{4,1}(1)).$

\item If $\beta=2$, by considering the general change of basis
$$e_1'=\sum\limits_{t=1}^{4}A_te_t, \ e_2'=\sum\limits_{t=1}^{4}B_te_t, \ x'=Hx+\sum\limits_{t=1}^{4}C_te_t,
$$
we find the relation between parameters $\alpha_2'$ and $\alpha_2$ from the product $e_1\cdot e_1=\sum\limits_{j=2}^{4}\alpha_{j}e_j,$ as follows:
$$\alpha_2'=\frac{A^2_1}{B_2}\alpha_2.$$

Now we consider the following subcases.
\begin{enumerate}

\item Let $\alpha_2\neq0.$ Then by choosing $B_2=A^2_1\alpha_2$ we can assume $\alpha_2=1$. Considering the associative identities $x\cdot (e_1\cdot e_1)=(x\cdot e_1)\cdot e_1$ and $e_2\cdot (e_1\cdot e_1)=(e_2\cdot e_1)\cdot e_1$, we obtain the following restrictions:
    $$\alpha_6=\frac12\alpha_3\alpha_5, \ \alpha_{9}=\alpha_{4}\alpha_{5},  \ \alpha_7=\frac12\alpha_5^2.$$
    Hence, we obtain the algebra ${\bf TP}_2(\mathfrak{s}^{1}_{4,1}(2))$.

\item Let $\alpha_2=0$. Then from the identity $e_1\cdot (e_1\cdot e_2)=(e_1\cdot e_1)\cdot e_2$, we derive $\alpha_5=0$. Again by using a change of basis, we obtain the following relation:
$$\alpha_3'=\frac{A_1}{B_2}\alpha_3.$$

\begin{enumerate}
        \item If $\alpha_3\neq0,$ then  from the associative identities $e_1\cdot (x\cdot e_2)=(e_1\cdot x)\cdot e_2$ and $e_1\cdot (x\cdot x)=(e_1\cdot x)\cdot x$, we conclude $\alpha_7=0,\ \alpha_9=2\alpha_4\alpha_6.$  So, we
obtain the algebra ${\bf TP}_3\mathfrak{s}^{1}_{4,1}(2))$.

    \item If $\alpha_3=0,$ by using a change of basis, we get
$$\alpha_4'=\frac{1}{B_2}\alpha_4.$$

If $\alpha_4\neq0,$ then considering associative identities for the triples $\{e_1,x,x\}$ and $\{e_2,x,x\}$ we derive $\alpha_6=\alpha_7=0$. Hence,
the algebra ${\bf TP}_4(\mathfrak{s}^{1}_{4,1}(2))$ is obtained.

If $\alpha_4=0$, then we have the algebra ${\bf TP}_5(\mathfrak{s}^{1}_{4,1}(2))$.

\end{enumerate}

\end{enumerate}

    \item If $\beta\neq1,2,$ then we have  $\alpha_2=\alpha_3=\alpha_5=\alpha_6=\alpha_7=\alpha_9=0,$ and obtain the algebra ${\bf TP}_6(\mathfrak{s}^{1}_{4,1}(\beta))$.

       \end{enumerate} \end{proof}

\begin{theorem} Let $(\mathfrak{s}^{1}_{n,1}(\beta), \cdot, [-,-])$ be a transposed Poisson algebra structure defined on the Lie algebra
$\mathfrak{s}^{1}_{n,1}(\beta)$ and $n\geq5$. Then the multiplication of $(\mathfrak{s}^{1}_{n,1}(\beta), \cdot)$ has the following form:

 $ {\bf TP}_1(\mathfrak{s}^{1}_{n,1}(1)): \
\begin{cases}
e_1\cdot e_1=\sum\limits_{j=3}^{n}\alpha_{j}e_j, \
e_1\cdot x=\sum\limits_{t=2}^{n-1}(t-2)\alpha_{t+1}e_t+\beta_{3}e_n, \quad \\[1mm]
x\cdot x=\sum\limits_{t=2}^{n-2}(t-2)(t-1)\alpha_{t+2}e_t+(n-3)\beta_{3}e_{n-1} +\beta_{5}e_n;
\end{cases}$

${\bf TP}_2(\mathfrak{s}^{1}_{n,1}(2)):\  \begin{cases}
e_1\cdot e_1=\sum\limits_{j=2}^{n}\alpha_{j}e_j, \
e_1\cdot x=\sum\limits_{t=2}^{n-1}(t-1)\alpha_{t+1}e_t +\beta_{3}e_n, \\[1mm]
x\cdot x=\sum\limits_{t=2}^{n-2}(t^2-t)\alpha_{t+2}e_t+ (n-2)\beta_{3}e_{n-1}+\beta_{5}e_n;
\end{cases}$


${\bf TP}_3(\mathfrak{s}^{1}_{n,1}(n-2)): \ \begin{cases}
e_1\cdot e_1=\sum\limits_{j=5}^{n}\alpha_{j}e_j, \
e_1\cdot e_2=\beta_{1}e_n, \
e_2\cdot e_2=\beta_{2}e_n, \\[1mm]
e_1\cdot x=\sum\limits_{t=4}^{n-1}(n+t-3)\alpha_{t+1}e_t +\beta_{3}e_n, \
e_2\cdot x=\beta_{4}e_n, \\[1mm]
x\cdot x=\sum\limits_{t=3}^{n-2}(n+t-5)(n+t-4)\alpha_{t+2}e_t+ \\ +(2n-6)\beta_{3}e_{n-1}+\beta_{5}e_n;
\end{cases} $

${\bf TP}_4(\mathfrak{s}^{1}_{n,1}(n-2)): \ \begin{cases}
e_1\cdot e_1=e_4+\sum\limits_{j=5}^{n}\alpha_{j}e_j, \\[1mm]
e_1\cdot x=\beta e_4+\sum\limits_{t=4}^{n-1}(n+t-5)\alpha_{t+1}e_t+\beta_{3}e_n, \\[1mm]
e_2\cdot x=\beta_{4}e_n, \ x\cdot x=(n-3)(n-2)e_2+\\[1mm]+\sum\limits_{t=3}^{n-2}(n+t-5)(n+t-4)\alpha_{t+2}e_t+(2n-6)\beta_{3}e_{n-1}+\beta_{5}e_n;
\end{cases}$

${\bf TP}_5(\mathfrak{s}^{1}_{n,1}(\beta)): \
\begin{cases}
e_1\cdot e_1=\sum\limits_{j=4}^{n}\alpha_{j}e_j, \\
e_1\cdot x=\sum\limits_{t=3}^{n-1}(t-3+\beta)\alpha_{t+1}e_t+\beta_{3}e_n, \ \beta\neq 1,2,n-2, \\[1mm]
x\cdot x=\sum\limits_{t=2}^{n-2}(t-3+\beta)(t-2+\beta)\alpha_{t+2}e_t+\\ +(n-4+\beta)\beta_{3}e_{n-1} +\beta_{5}e_n;
\end{cases}$\\
where it is taken into account that the transposed Poisson algebra has its products with respect to the bracket $[-,-]$, and the remaining products are equal to zero.

\end{theorem}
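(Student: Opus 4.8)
The plan is to run the same machine as in the $n=4$ case, now over the $n\geq5$ branch of Theorem~\ref{halfderiv1t}. By Lemma~\ref{lemma1}, giving a transposed Poisson structure on $\mathfrak{s}^{1}_{n,1}(\beta)$ is the same as giving a commutative, associative product $\cdot$ for which every left multiplication $\varphi_z(y):=z\cdot y$ is a $\frac12$-derivation. So the first step is to substitute the general $\frac12$-derivation form from Theorem~\ref{halfderiv1t} for each operator $\varphi_{e_1},\dots,\varphi_{e_n},\varphi_x$, introducing structure constants $\alpha_{i,j}$, $\beta_{i,n}$, $\delta_{i,n}$ together with their $z=x$ analogues, all subject to the derivation restrictions $(\beta-2)\alpha_{i,2}=(n-2-\beta)\beta_{i,n}=0$. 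At this stage each $\varphi_{e_i}$ acts on $e_3,\dots,e_n$ by the scalar $\alpha_{i,1}$, a fact that will be decisive below.

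Second, I would impose commutativity in the form $\varphi_z(y)=z\cdot y=y\cdot z=\varphi_y(z)$ over all pairs built from the generators $\{e_1,e_2,x\}$ and their successive brackets $e_3,\dots,e_n$. Matching coefficients basis element by basis element produces a long list of linear identifications; the key mechanism is that a product $e_i\cdot e_j$ with $i,j\geq2$ may contain neither $x$ nor the generators $e_1,e_2$, which forces the scalar parts $\alpha_{i,1}$ of almost all $\varphi_{e_i}$ to vanish and collapses the surviving freedom onto the parameters coming from $\varphi_{e_1}$ and $\varphi_x$. The outcome is a single general commutative multiplication table depending on the coefficients $\alpha_2,\alpha_3,\dots$ of $e_1\cdot e_1$ together with $\beta_3$ and $\beta_5$, carrying the residual restrictions inherited from the $\frac12$-derivation conditions and now additionally sharpened by factors of $(\beta-1)$.

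Third, with commutativity fixed, I would enforce associativity on generating triples — for instance $e_1\cdot(e_1\cdot x)=(e_1\cdot e_1)\cdot x$, $e_2\cdot(e_1\cdot e_1)=(e_2\cdot e_1)\cdot e_1$, $e_1\cdot(x\cdot x)=(e_1\cdot x)\cdot x$ and the companions with $e_2$ in place of $e_1$ — to extract the quadratic relations among the surviving parameters. Then I would split into the cases $\beta=1$, $\beta=2$, $\beta=n-2$ and generic $\beta\neq1,2,n-2$, which are precisely the values where the structure degenerates: $\beta=2$ frees $\alpha_2$ and $\beta=n-2$ frees $\beta_n$ through the derivation restrictions, while $\beta=1$ kills the $(\beta-1)$-coefficients that otherwise appear in $\varphi(x)$ and in the associativity constraints. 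In each case I would apply a general change of basis $e_1'=\sum_t A_t e_t$, $e_2'=\sum_t B_t e_t$, $x'=Hx+\sum_t C_t e_t$ and track how it rescales the leading parameter — mimicking the computation $\alpha_2'=(A_1^2/B_2)\alpha_2$ of the $n=4$ theorem — to normalize the first nonzero leading coefficient to $1$ and the rest of the redundant data to $0$, yielding the canonical forms ${\bf TP}_1$ through ${\bf TP}_5$.

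The hard part will be the interplay of the last two steps. The associativity identities are quadratic and heavily coupled, so one has to organize the parameter elimination carefully to be sure no family is lost or counted twice. The distinguished value $\beta=n-2$ is the most delicate, since there $\beta_n$ is free and $\varphi_{e_1}$ carries an extra $e_n$-term, requiring a subcase split according to which leading coefficient of $e_1\cdot e_1$ first fails to vanish — exactly the $\alpha_2\neq0$ versus $\alpha_2=0$ (then $\alpha_3$, then $\alpha_4$) branching seen in the $n=4$ proof, and this is what separates ${\bf TP}_3$ from ${\bf TP}_4$. Verifying that the exhibited basis changes genuinely exhaust the normalization freedom, and checking that the listed algebras give the complete and non-redundant answer, is where the bulk of the bookkeeping lives.
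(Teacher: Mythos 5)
Your proposal follows essentially the same route as the paper: reduce via Lemma~\ref{lemma1} to commuting families of $\frac12$-derivations from Theorem~\ref{halfderiv1t}, impose commutativity to collapse to one parametric multiplication table, extract the quadratic constraints from associativity on generator triples, split into the cases $\beta=1,2,n-2$ and generic $\beta$, and use a change of basis only to separate ${\bf TP}_3$ from ${\bf TP}_4$ via the leading coefficient of $e_1\cdot e_1$ (which for $n\geq5$ is $\alpha_4$, since $\alpha_2=\alpha_3=0$ are already forced, so the cascade you describe degenerates to a single dichotomy). This matches the paper's proof in both structure and substance.
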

\begin{proof} Let $(\mathfrak{s}^{1}_{n,1}(\beta), \cdot, [-,-])$ be a transposed Poisson algebra structure defined on the Lie algebra $\mathfrak{s}^{1}_{n,1}(\beta)$. Then
for any element of $x \in \mathfrak{s}^{1}_{n,1}(\beta) $, we have that operator of multiplication $\varphi_x(y) = x \cdot y$ is a $\frac12$-derivation. Hence, for $1 \leq i \leq n$ we derive by Theorem \ref{halfderiv1t}:
$$\begin{array}{lll}
\varphi_{e_i}(e_1)=\sum\limits_{t=1}^{n}\alpha_{i,t}e_t, \ \varphi_{e_i}(e_2)=\alpha_{i,1}e_2+\beta_{i,n}e_n, \ \varphi_{e_i}(e_t)=\alpha_{i,1}e_t, \ 3\leq t\leq n, \\[1mm]
\varphi_{e_i}(x)=\sum\limits_{t=2}^{n-1}(t-3+\beta)\alpha_{i,t+1}e_t+\gamma_{i,n}e_n+\alpha_{i,1}x, \\[1mm]
\varphi_{x}(e_1)=\sum\limits_{t=1}^{n}\alpha_{x,t}e_t, \ \varphi_{x}(e_2)=\alpha_{x,1}e_2+\beta_{x,n}e_n, \ \varphi_{x}(e_t)=\alpha_{x,1}e_t, \ 3\leq t\leq n, \\[1mm]
\varphi_{x}(x)=\sum\limits_{t=2}^{n-1}(t-3+\beta)\alpha_{x,t+1}e_t+\gamma_{x,n}e_n+\alpha_{x,1}x, \\[1mm]
\end{array}$$
with restrictions $(\beta-2)\alpha_{x,2}=(n-2-\beta)\beta_{x,n}=(\beta-2)\alpha_{i,2}=(n-2-\beta)\beta_{i,n}=0, \  1\leq i\leq n.$

It is known that $\varphi_{e_i}(e_j)=e_i\cdot e_j=e_j\cdot e_i=\varphi_{e_j}(e_i).$
For $i=1, \ j=2$ we have
$$\alpha_{1,1}e_2+\beta_{1,n}e_n=\sum\limits_{j=1}^{n} \alpha_{2,t}e_t.$$

Comparing coefficients of the basis elements we obtain that
$$\alpha_{2,1}=0, \ \alpha_{2,2}=\alpha_{1,1}, \ \alpha_{2,n}=\beta_{1,n}, \ \alpha_{2,t}=0, \ 3\leq t\leq n-1.$$

For $i=1, \ 3\leq j\leq n$ we have
$$\alpha_{1,1}e_j=\sum\limits_{j=1}^{n}\alpha_{j,t}e_t.$$

Comparing coefficients of the basis elements we obtain that
$$\alpha_{j,j}=\alpha_{1,1}, \ \alpha_{j,t}=0, \ 1\leq t\leq n, \  3\leq j\neq t \leq n.$$

For $i=2, \ 3\leq j\leq n$ we have $0=\beta_{j,n}e_n,$ and we obtain that
$$\beta_{j,n}=0, \   3\leq j \leq n.$$

It is known that $\varphi_{e_1}(x)=e_1\cdot x=x\cdot e_1=\varphi_{x}(e_1).$ We have
$$\sum\limits_{t=2}^{n-1}(t-3+\beta)\alpha_{1,t+1}e_t+ \gamma_{1,n}e_n+\alpha_{1,1}x
=\sum\limits_{t=1}^{n}\alpha_{x,t}e_t.$$

Comparing coefficients of the basis elements we obtain that
$$\alpha_{1,1}=0,\ \alpha_{x,n}=\gamma_{1,n}, \ \alpha_{x,1}=0, \ \alpha_{x,t}=(t-3+\beta)\alpha_{1,t+1}, \ 2\leq t \leq n-1.$$

It is known that $\varphi_{e_2}(x)=e_2\cdot x=x\cdot e_2=\varphi_{x}(e_2).$ We have
$$(n-4+\beta)\beta_{1,n}e_{n-1}+\gamma_{2,n}e_n
=\beta_{x,n}e_n.$$

Comparing coefficients of the basis elements we obtain that
$$(n-4+\beta)\beta_{1,n}=0,\ \beta_{x,n}=\gamma_{2,n}.$$

It is known that $\varphi_{e_i}(x)=e_i\cdot x=x\cdot e_i=\varphi_{x}(e_i).$ For $3\leq i\leq n$ we have
$$\gamma_{i,n}=0, \ 3\leq i\leq n.$$

Thus, we obtain $TP(\alpha_2,\dots,\alpha_n, \beta_1,\dots, \beta_5):$

$$\begin{array}{lll}
e_1\cdot e_1=\sum\limits_{j=2}^{n}\alpha_{j}e_j, \quad
e_1\cdot e_2=\beta_{1}e_n, \quad
e_2\cdot e_2=\beta_{2}e_n, \\[1mm]
e_1\cdot x=\sum\limits_{t=2}^{n-1}(t-3+\beta)\alpha_{t+1}e_t+\beta_{3}e_n, \quad
e_2\cdot x=\beta_{4}e_n, \\[1mm]
x\cdot x=\sum\limits_{t=2}^{n-2}(t-3+\beta)(t-2+\beta)\alpha_{t+2}e_t+(n-4+\beta)\beta_{3}e_{n-1} +\beta_{5}e_n. \\[1mm]
\end{array}$$

with restrictions
$$(\beta-2)(\beta-1)\alpha_{3}=0, \
(n-2-\beta)\beta_{4}=0, \
(\beta-2)\alpha_{2}=0, \
(n-2-\beta)\beta_{1}=0, \  (n-2-\beta)\beta_{2}=0.$$

Considering the associative identity $x\cdot (y\cdot z)=(x\cdot y)\cdot z$, we obtain the following restrictions on structure constants:
$$\begin{array}{lll}
\{e_1,e_1,e_2\} & \Rightarrow & \alpha_{2}\beta_{2}=0,  \\[1mm]
\{e_2,e_1,x\} & \Rightarrow & (\beta-1)\alpha_{3}\beta_{2}=0,  \\[1mm]
\{e_1,e_1,x\} & \Rightarrow & (\beta-1)\alpha_{3}\beta_{1}=\alpha_{2}\beta_{4},  \\[1mm]
\{x,x,e_1\} & \Rightarrow & \beta(\beta-1)\alpha_{4}\beta_{1}=0,  \\[1mm]
\{x,x,e_2\} & \Rightarrow & \beta(\beta-1)\alpha_{4}\beta_{2}=0. \\[1mm]
\end{array}$$

We have the following cases.
\begin{enumerate}
    \item If $\beta=1$, then we get $\alpha_2=\beta_1=\beta_2=\beta_4=0$ and obtain the algebra ${\bf TP}_1(\mathfrak{s}^{1}_{n,1}(1)).$

    \item If $\beta=2$, then we derive $\beta_1=\beta_2=\beta_4=0$ and have the algebra ${\bf TP}_2(\mathfrak{s}^{1}_{n,1}(2)).$

\item If $\beta=n-2$, then we conclude $\alpha_2=\alpha_3=0$ and $\alpha_4\beta_1=0, \ \alpha_4\beta_2=0.$ In this case we consider the general change of basis:
$$e_1'=\sum\limits_{t=1}^{n}A_te_t, \ e_2'=\sum\limits_{t=1}^{n}B_te_t, \ x'=Hx+\sum\limits_{t=1}^{n}C_te_t.$$
Then from the multiplication $e_1\cdot e_1=\sum\limits_{j=2}^{n}\alpha_{j}e_j,$ we discover that the structure constant of the $\alpha_4$ changes as follows:
$$\alpha_4'=\frac{\alpha_4}{B_2}.$$

Now we consider the following subcases.

\begin{enumerate}
\item If $\alpha_4=0,$ then we have the algebra ${\bf TP}_3(\mathfrak{s}^{1}_{n,1}(n-2)).$

\item If $\alpha_4\neq0,$ then we get $\alpha_4=1, \beta_1=0, \beta_2=0,$ and obtain the algebra ${\bf TP}_4(\mathfrak{s}^{1}_{n,1}(n-2)).$
\end{enumerate}

 \item If $\beta\neq 1, 2, n-2,$ then we derive $\alpha_2=\alpha_3=\beta_1=\beta_2=\beta_4=0$ and we have the algebra ${\bf TP}_5(\mathfrak{s}^{1}_{n,1}(\beta)).$

\end{enumerate}

\end{proof}

\begin{theorem} Let $(\mathfrak{s}^{2}_{n,1}, \cdot, [-,-])$ be a transposed Poisson algebra structure defined on the Lie algebra
$\mathfrak{s}^{2}_{n,1}$. Then the multiplication of $(\mathfrak{s}^{2}_{n,1}, \cdot)$ has the following form:
$${\bf TP}(\mathfrak{s}^{2}_{n,1}):
e_1\cdot e_1=\sum\limits_{t=4}^{n}\alpha_{t}e_t,\
e_1\cdot x=\sum\limits_{t=3}^{n-1}\alpha_{t+1}e_t+\gamma_{1}e_n, \
x\cdot x=\sum\limits_{t=2}^{n-2}\alpha_{t+2}e_t+\gamma_{1}e_{n-1}+\gamma_{2}e_n,$$
where it is taken into account that the transposed Poisson algebra has its products with respect to the bracket $[-,-]$, and the remaining products are equal to zero.
\end{theorem}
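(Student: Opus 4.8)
The plan is to apply Lemma~\ref{lemma1}: giving a transposed Poisson structure on $\mathfrak{s}^{2}_{n,1}$ is the same as giving a commutative associative product $\cdot$ for which every left multiplication $\varphi_z(y)=z\cdot y$ is a $\frac12$-derivation. First I would write each operator $\varphi_{e_i}$ ($1\le i\le n$) and $\varphi_x$ in the general form supplied by Theorem~\ref{halfderiv2}, attaching to the $i$-th operator its own parameters (a diagonal scalar $\alpha_{i,1}$, the coefficients $\alpha_{i,t}$ occurring in $\varphi_{e_i}(e_1)$, and a coefficient $\delta_{i,n}$), and similarly for $\varphi_x$.

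The main step is to impose commutativity in the form $\varphi_z(y)=z\cdot y=y\cdot z=\varphi_y(z)$ over all pairs of basis vectors. Comparing $\varphi_{e_1}(e_2)=\alpha_{1,1}e_2$ with $\varphi_{e_2}(e_1)$, whose support sits in $\langle e_1,e_3,\dots,e_n\rangle$, immediately forces $\alpha_{1,1}=0$ and $\varphi_{e_2}(e_1)=0$; repeating this over the pairs $\{e_1,e_j\}$ and $\{e_i,e_j\}$ with $i,j\ge2$ makes every diagonal scalar vanish and yields $e_i\cdot e_j=0$ for $i,j\ge2$. The pairs $\{e_i,x\}$ with $i\ge2$ then give the $\varphi_x$-scalar $=0$ together with $e_i\cdot x=0$, so that $e_2,\dots,e_n$ all lie in the annihilator of $(\mathfrak{s}^{2}_{n,1},\cdot)$. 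Only the pair $\{e_1,x\}$ remains active, and matching its coefficients ties together $e_1\cdot e_1$, $e_1\cdot x$ and $x\cdot x$; in particular the $e_2$-coefficient of $e_1\cdot x=x\cdot e_1$ kills the $e_3$-term of $e_1\cdot e_1$, which is exactly why the sum there begins at $e_4$. At this point one reads off precisely the asserted table with free parameters $\alpha_4,\dots,\alpha_n,\gamma_1,\gamma_2$.

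Finally I would check associativity $a\cdot(b\cdot c)=(a\cdot b)\cdot c$, and here the argument collapses: the only possibly nonzero products are $e_1\cdot e_1$, $e_1\cdot x$ and $x\cdot x$, each of which lands in $\langle e_2,\dots,e_n\rangle$, a space of annihilating vectors. Hence every triple product vanishes on both sides and associativity is automatic, so no additional relations and no case analysis are needed---in sharp contrast with the $\mathfrak{s}^{1}_{n,1}(\beta)$ situation. I expect the only delicate point to be the index bookkeeping in the commutativity step, keeping the ranges $2\le t\le n-1$ aligned so that the shifts producing $e_1\cdot x$ and $x\cdot x$ come out exactly as stated; the associativity verification, by comparison, is essentially free once the annihilator observation is in place.
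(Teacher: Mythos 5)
Your proposal is correct and follows essentially the same route as the paper: parametrize all multiplication operators via Theorem~\ref{halfderiv2} (through Lemma~\ref{lemma1}), then impose commutativity pairwise --- exactly as the paper does --- to force $\alpha_{1,1}=0$, annihilate $e_2,\dots,e_n$, kill the $e_3$-term of $e_1\cdot e_1$, and tie the coefficients of $e_1\cdot e_1$, $e_1\cdot x$ and $x\cdot x$ together. Your closing observation that associativity is automatic (since every product lands in the span of $e_2,\dots,e_n$, which lies in the annihilator of $\cdot$) is a point the paper leaves implicit, so your argument is, if anything, slightly more complete.
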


\begin{proof}
Let $(\mathfrak{s}^{2}_{n,1}, \cdot, [-,-])$ be a transposed Poisson algebra structure defined on the Lie algebra $\mathfrak{s}^{2}_{n,1}$. Then
for any element of $x \in \mathfrak{s}^{2}_{n,1}$, we have that the operator of multiplication $\varphi_x(y) = x \cdot y$ is a $\frac12$ -derivation. Hence, by using Theorem \ref{halfderiv2} for $1 \leq i \leq n,$ we derive

$$\begin{array}{lll}
\varphi_{e_i}(e_1)=\alpha_{i,1}e_1+\sum\limits_{t=3}^{n}\alpha_{i,t}e_t, \ \varphi_{e_i}(e_t)=\alpha_{i,1}e_t, \ 2\leq t\leq n, \\[1mm]
\varphi_{e_i}(x)=\sum\limits_{t=2}^{n-1}\alpha_{i,t+1}e_t+\gamma_{i,n}e_n+\alpha_{i,1}x, \\[1mm]
\varphi_{x}(e_1)=\alpha_{x,1}e_1+ \sum\limits_{t=3}^{n}\alpha_{x,t}e_t, \ \varphi_{x}(e_t)=\alpha_{x,1}e_t, \ 2\leq t\leq n, \\[1mm]
\varphi_{x}(x)=\sum\limits_{t=2}^{n-1}\alpha_{x,t+1}e_t+ \gamma_{x,n}e_n+\alpha_{x,1}x. \\[1mm]
\end{array}$$

It is known that $\varphi_{e_i}(e_j)=e_i\cdot e_j=e_j\cdot e_i=\varphi_{e_j}(e_i).$ For $2\leq i\leq n, \ j=1$ we have
$$\alpha_{i,1}e_1+\sum\limits_{t=3}^{n}\alpha_{i,t}e_t=\alpha_{1,1}e_i.$$
Comparing coefficients of the basis elements we obtain that
$$\alpha_{1,1}=0,\  \alpha_{i,1}=\alpha_{i,t}=0, \ 2\leq i\leq n, \ 3\leq t\leq n.$$

It is known that $\varphi_{e_1}(x)=e_1\cdot x=x\cdot e_1=\varphi_{x}(e_1).$ We have
$$\sum\limits_{t=2}^{n-1}\alpha_{1,t+1}e_t+ \gamma_{1,n}e_n=\alpha_{x,1}e_1+ \sum\limits_{t=3}^{n}\alpha_{x,t}e_t.$$
Comparing coefficients of the basis elements we obtain that
$$\alpha_{1,3}=0, \ \alpha_{x,1}=0,\ \alpha_{x,n}=\gamma_{1,n}, \ \alpha_{x,t}=\alpha_{1,t+1}, \ 3\leq t \leq n-1.$$

It is known that $\varphi_{e_i}(x)=e_i\cdot x=x\cdot e_i=\varphi_{x}(e_i).$ We obtain that $\gamma_{i,n}=0, \ 2\leq i \leq n.$
Thus, we have the algebra ${\bf TP}(\mathfrak{s}^{2}_{n,1}).$ \end{proof}

\begin{theorem} Let $(\mathfrak{s}^{3}_{n,1}, \cdot, [-,-])$ be a transposed Poisson algebra structure defined on the Lie algebra
$\mathfrak{s}^{3}_{n,1}$. Then the multiplication of $(\mathfrak{s}^{3}_{n,1}, \cdot)$ has the following form:
$${\bf TP}(\mathfrak{s}^{3}_{n,1}):\begin{cases}
e_1\cdot e_1=\sum\limits_{t=3}^{n}\alpha_{t}e_t,\
e_1\cdot x=\sum\limits_{t=3}^{n-1}(t-2)\alpha_{t+1}e_t+\gamma_{1}e_n, \\[1mm]
x\cdot x=\sum\limits_{t=3}^{n-2}(t-2)(t-1)\alpha_{t+2}e_t+(n-3)\gamma_{1}e_{n-1}+\gamma_{2}e_n,
\end{cases}$$
where it is taken into account that the transposed Poisson algebra has its products with respect to the bracket $[-,-]$, and the remaining products are equal to zero.
\end{theorem}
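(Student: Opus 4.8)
The plan is to run the same scheme used for $\mathfrak{s}^{2}_{n,1}$. By Lemma~\ref{lemma1}, a transposed Poisson structure $\cdot$ on $\mathfrak{s}^{3}_{n,1}$ is precisely a commutative associative product for which every multiplication operator $\varphi_z(y)=z\cdot y$ is a $\frac12$-derivation. So the first step is to invoke Theorem~\ref{halfderiv3} to record, for each $1\le i\le n$,
$$\varphi_{e_i}(e_1)=\alpha_{i,1}e_1+\sum\limits_{t=3}^{n}\alpha_{i,t}e_t,\quad \varphi_{e_i}(e_t)=\alpha_{i,1}e_t\ (2\le t\le n),\quad \varphi_{e_i}(x)=\sum\limits_{t=3}^{n-1}(t-2)\alpha_{i,t+1}e_t+\gamma_{i,n}e_n+\alpha_{i,1}x,$$
together with the analogous expressions for $\varphi_x$ with the subscript $x$ in place of $i$. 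This encodes all the $\frac12$-derivation constraints simultaneously, so that only commutativity and associativity remain to be imposed.

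The bulk of the argument is then the bookkeeping of commutativity $\varphi_a(b)=\varphi_b(a)$ on the generating elements $e_1,e_2,x$. I would first compare $\varphi_{e_i}(e_1)$ with $\varphi_{e_1}(e_i)=\alpha_{1,1}e_i$ for $2\le i\le n$: the coefficient of $e_2$ (taking $i=2$) forces $\alpha_{1,1}=0$, and then $\alpha_{i,1}=0$ and $\alpha_{i,t}=0$ for $3\le t\le n$. Next, $\varphi_{e_1}(x)=\varphi_x(e_1)$ gives $\alpha_{x,1}=0$, $\alpha_{x,t}=(t-2)\alpha_{1,t+1}$ for $3\le t\le n-1$, and $\alpha_{x,n}=\gamma_{1,n}$; finally $\varphi_{e_i}(x)=\varphi_x(e_i)=0$ for $2\le i\le n$ annihilates every $\gamma_{i,n}$. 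The only surviving products are therefore $e_1\cdot e_1$, $e_1\cdot x$ and $x\cdot x$. After renaming $\alpha_{1,t}=\alpha_t$, $\gamma_{1,n}=\gamma_1$, $\gamma_{x,n}=\gamma_2$, and substituting $\alpha_{x,t+1}=(t-1)\alpha_{t+2}$ into $\varphi_x(x)$, the coefficient of $e_t$ there becomes $(t-2)(t-1)\alpha_{t+2}$ for $3\le t\le n-2$ and $(n-3)\gamma_1$ for $t=n-1$, reproducing exactly the family ${\bf TP}(\mathfrak{s}^{3}_{n,1})$ in the statement. The fiddly part is precisely this index tracking, where the shift $\alpha_{x,t+1}=(t-1)\alpha_{t+2}$ must be handled carefully to land on the stated quadratic coefficients.

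It remains to confirm associativity, which I expect to be the step that is conceptually delicate to articulate but computationally free, so that no case split is needed (in contrast to the $\mathfrak{s}^{1}_{n,1}(\beta)$ families). The key observation is that all three nonzero products land in $\mathrm{span}(e_3,\dots,e_n)\subseteq \mathrm{span}(e_2,\dots,e_n)$, and the relations just derived show that $\varphi_{e_t}\equiv 0$ for $2\le t\le n$ while $\varphi_{e_1}(e_t)=\varphi_x(e_t)=0$ as well; hence $e_t\cdot \mathfrak{s}^{3}_{n,1}=0$ for every $2\le t\le n$. Consequently any triple product $x\cdot(y\cdot z)$ and $(x\cdot y)\cdot z$ vanishes identically, associativity holds automatically, and the single family ${\bf TP}(\mathfrak{s}^{3}_{n,1})$ is exactly the set of transposed Poisson structures.
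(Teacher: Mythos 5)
Your proposal is correct and follows essentially the same route as the paper's own proof: apply Lemma~\ref{lemma1} together with Theorem~\ref{halfderiv3} to parametrize every multiplication operator $\varphi_z$, then impose commutativity on the pairs $(e_i,e_1)$, $(e_1,x)$ and $(e_i,x)$ to force $\alpha_{1,1}=0$, $\alpha_{i,1}=\alpha_{i,t}=0$, $\alpha_{x,1}=0$, $\alpha_{x,t}=(t-2)\alpha_{1,t+1}$, $\alpha_{x,n}=\gamma_{1,n}$ and $\gamma_{i,n}=0$, which yields exactly ${\bf TP}(\mathfrak{s}^{3}_{n,1})$ after the index shift you describe. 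Your closing verification that associativity is automatic (since $e_t\cdot\mathfrak{s}^{3}_{n,1}=0$ for $2\le t\le n$ and all nonzero products lie in $\mathrm{span}(e_3,\dots,e_n)$, so every triple product vanishes) is a step the paper leaves tacit, and it is a welcome completeness check rather than a deviation.
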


\begin{proof} Let
$(\mathfrak{s}^{3}_{n,1}, \cdot, [-,-])$ be a transposed Poisson algebra structure defined on the Lie algebra $\mathfrak{s}^{3}_{n,1}$. Then
for any element of $x \in \mathfrak{s}^{3}_{n,1}$, we have that operator of multiplication $\varphi_x(y) = x \cdot y$ is a $\frac12$-derivation. Hence, from Theorem \ref{halfderiv3} for $1 \leq i \leq n$ we derive
$$\begin{array}{lll}
\varphi_{e_i}(e_1)=\alpha_{i,1}e_1+ \sum\limits_{t=3}^{n}\alpha_{i,t}e_t, \
\varphi_{e_i}(e_t)=\alpha_{i,1}e_t, \ 2\leq t\leq n, \\[1mm]
\varphi_{e_i}(x)=\sum\limits_{t=3}^{n-1}(t-2)\alpha_{i,t+1}e_t+\gamma_{i,n}e_n+\alpha_{i,1}x, \\[1mm]
\varphi_{x}(e_1)=\alpha_{x,1}e_1+ \sum\limits_{t=3}^{n}\alpha_{x,t}e_t, \
\varphi_{x}(e_t)=\alpha_{x,1}e_t, \ 2\leq t\leq n, \\[1mm]
\varphi_{x}(x)=\sum\limits_{t=3}^{n-1}(t-2)\alpha_{x,t+1}e_t+\gamma_{x,n}e_n+\alpha_{x,1}x. \\[1mm]
\end{array}$$

It is known that $\varphi_{e_i}(e_j)=e_i\cdot e_j=e_j\cdot e_i=\varphi_{e_j}(e_i).$ For $2\leq i\leq n$ and $j=1$ we have
$$\alpha_{i,1}e_1+\sum\limits_{t=3}^{n}\alpha_{i,t}e_t=\alpha_{1,1}e_i.$$
Comparing coefficients of the basis elements we obtain that
$$\alpha_{1,1}=0, \ \alpha_{i,1}=\alpha_{i,t}=0, \ 2\leq i\leq n, \ 3\leq t\leq n.$$

Similarly, from $\varphi_{e_1}(x)=e_1\cdot x=x\cdot e_1=\varphi_{x}(e_1),$ we reduce

$$\sum\limits_{t=3}^{n-1}(t-2)\alpha_{1,t+1}e_t+ \gamma_{1,n}e_n=\alpha_{x,1}e_1+\sum\limits_{t=3}^{n}\alpha_{x,t}e_t.$$

Comparing the coefficients for the basis elements, we obtain the following restrictions on the coefficients:
$$\alpha_{x,1}=0,\ \alpha_{x,n}=\gamma_{1,n}, \  \alpha_{x,t}=(t-2)\alpha_{1,t+1}, \  3\leq t \leq n-1.$$

Finally, from the equality $\varphi_{e_i}(x)=e_i\cdot x=x\cdot e_i=\varphi_{x}(e_i),$ we derive $\gamma_{i,n}=0, \ 2\leq i \leq n.$
Thus, we have the algebra ${\bf TP}(\mathfrak{s}^{3}_{n,1}).$
\end{proof}


\begin{theorem} Let $(\mathfrak{s}^{4}_{n,1}(\alpha_3, \alpha_4, \ldots, \alpha_{n-1}), \cdot, [-,-])$ be a transposed Poisson algebra structure defined on the Lie algebra $\mathfrak{s}^{4}_{n,1}(\alpha_3, \alpha_4, \ldots, \alpha_{n-1}).$
Then the multiplication of \\ $(\mathfrak{s}^{4}_{n,1}(\alpha_3, \alpha_4, \ldots, \alpha_{n-1}), \cdot)$ has the following form:
$${\bf TP}(\mathfrak{s}^{4}_{n,1}):\
\begin{cases}
e_1\cdot e_1=\sum\limits_{t=4}^{n}\beta_{t}e_t, \ e_1\cdot x=\sum\limits_{t=3}^{n-1}(\beta_{t+1}+\sum\limits_{r=3}^{t-2}\alpha_{r}\beta_{t-r+2})e_t+\gamma_{1}e_n, \\[1mm]
x\cdot x=\sum\limits_{i=2}^{n-2}\Big(\beta_{i+2}+\sum\limits_{j=3}^{i-1}\alpha_{j}(2\beta_{i-j+3}+\sum\limits_{r=3}^{i-j}\alpha_{r}\beta_{i-j-r+4})\Big)e_i
\\[1mm]+\Big(\gamma_{1}+\sum\limits_{i=3}^{n-2}\alpha_i(\beta_{n-i+2}+\sum\limits_{r=3}^{n-i-1}\alpha_r\beta_{n-i-r+3})\Big)e_{n-1}+\gamma_{2}e_n,
\end{cases}$$
where it is taken into account that the transposed Poisson algebra has its products with respect to the bracket $[-,-]$, and the remaining products are equal to zero.
\end{theorem}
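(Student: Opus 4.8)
The plan is to run the same machine as in the preceding theorems, built on Lemma~\ref{lemma1}: in any transposed Poisson structure every left-multiplication $\varphi_z(y)=z\cdot y$ is a $\frac12$-derivation. So the first move is to invoke Theorem~\ref{halfderiv4} and record $\varphi_{e_i}$ (for $1\le i\le n$) and $\varphi_x$ in the general $\frac12$-derivation form, introducing doubly-indexed parameters: $\varphi_{e_i}(e_1)=a_{i,1}e_1+\sum_{t=3}^{n}a_{i,t}e_t$, $\varphi_{e_i}(e_j)=a_{i,1}e_j$ for $j\ge2$, and $\varphi_{e_i}(x)=\sum_{t=2}^{n-1}\bigl(a_{i,t+1}+\sum_{s=3}^{t-1}\alpha_s a_{i,t-s+2}\bigr)e_t+c_{i,n}e_n+a_{i,1}x$, together with the analogous data $a_{x,t},c_{x,n}$ for $\varphi_x$. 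Thus each operator is encoded by the coefficients of its value on $e_1$ plus one extra constant.

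First I would impose commutativity through the identities $\varphi_{e_i}(e_j)=\varphi_{e_j}(e_i)$. Taking $j=1$ and comparing coefficients forces $a_{1,1}=0$ and $a_{i,1}=a_{i,t}=0$ for $2\le i\le n$, $3\le t\le n$; hence $e_i\cdot e_j=0$ whenever $i,j\ge2$ and $e_1\cdot e_j=0$ for $j\ge2$, so the only surviving product among the nilradical generators is $e_1\cdot e_1=\sum_{t=4}^{n}\beta_t e_t$, where I write $\beta_t:=a_{1,t}$. Next, the relation $\varphi_{e_1}(x)=\varphi_x(e_1)$ yields $a_{1,3}=0$ (which is precisely why $e_1\cdot e_1$ begins at $e_4$), $a_{x,1}=0$, $a_{x,n}=c_{1,n}=:\gamma_1$, and the key identification $a_{x,t}=\beta_{t+1}+\sum_{r=3}^{t-2}\alpha_r\beta_{t-r+2}$ for $3\le t\le n-1$; reading off $\varphi_{e_1}(x)$ then produces the stated formula for $e_1\cdot x$. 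Finally, $\varphi_{e_i}(x)=\varphi_x(e_i)$ for $2\le i\le n$ collapses to $c_{i,n}=0$, so $e_i\cdot x=0$ for every $i\ge2$.

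The remaining and most delicate step is to compute $x\cdot x=\varphi_x(x)$ from the formula $\varphi_x(x)=\sum_{t=2}^{n-1}\bigl(a_{x,t+1}+\sum_{s=3}^{t-1}\alpha_s a_{x,t-s+2}\bigr)e_t+c_{x,n}e_n$ by substituting the expressions for $a_{x,m}$ obtained above. The hard part is purely combinatorial bookkeeping, since each $a_{x,m}$ is itself a sum over the $\beta_t$, so the coefficient of a generic $e_i$ turns into a nested sum that must be reorganized. For $2\le i\le n-2$ I would merge the two identical linear contributions $\sum_r\alpha_r\beta_{i-r+3}$ coming from $a_{x,i+1}$ and from the $s$-sum (this produces the factor $2$) and absorb the quadratic part into the inner sum $\sum_{r=3}^{i-j}\alpha_r\beta_{i-j-r+4}$, giving the coefficient $\beta_{i+2}+\sum_{j=3}^{i-1}\alpha_j\bigl(2\beta_{i-j+3}+\sum_{r=3}^{i-j}\alpha_r\beta_{i-j-r+4}\bigr)$; for the $e_{n-1}$ coefficient the term $a_{x,n}=\gamma_1$ supplies the leading $\gamma_1$, while $c_{x,n}=:\gamma_2$ stays free on $e_n$. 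This reproduces exactly the nested-sum expression in the statement.

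It then remains only to check associativity, and this is automatic: all three nonzero products $e_1\cdot e_1$, $e_1\cdot x$ and $x\cdot x$ lie in $\langle e_2,\dots,e_n\rangle$, and each of $e_2,\dots,e_n$ annihilates the whole algebra under $\cdot$. Hence every triple product vanishes, associativity holds trivially, and by Lemma~\ref{lemma1} the displayed multiplication is a genuine transposed Poisson structure. Since the commutativity relations imposed no constraints beyond those already used, one obtains the single family ${\bf TP}(\mathfrak{s}^{4}_{n,1})$ with free parameters $\beta_4,\dots,\beta_n,\gamma_1,\gamma_2$.
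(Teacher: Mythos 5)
Your proposal is correct and follows essentially the same route as the paper: invoke Lemma~\ref{lemma1}, write each multiplication operator in the $\frac12$-derivation form of Theorem~\ref{halfderiv4}, impose commutativity via $\varphi_{e_i}(e_j)=\varphi_{e_j}(e_i)$ and $\varphi_{e_i}(x)=\varphi_{x}(e_i)$, and read off the surviving products; your identifications $a_{1,1}=0$, $a_{i,1}=a_{i,t}=0$, $a_{1,3}=a_{x,1}=0$, $a_{x,n}=c_{1,n}$, $c_{i,n}=0$ and the nested-sum bookkeeping for $x\cdot x$ all match the paper's constraints. Your explicit verification that associativity is automatic (every product lands in $\langle e_2,\dots,e_n\rangle$, which annihilates the algebra) is a small completeness bonus that the paper leaves implicit.
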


\begin{proof} Let us consider the transposed Poisson structure $(\mathfrak{s}^{4}_{n,1}(\alpha_3, \alpha_4, \ldots, \alpha_{n-1}), \cdot, [-,-])$ on the Lie algebra $\mathfrak{s}^{4}_{n,1}(\alpha_3, \alpha_4, \ldots, \alpha_{n-1})$. Then
for any element of $x \in \mathfrak{s}^{4}_{n,1}(\alpha_3, \alpha_4, \ldots, \alpha_{n-1})$, we have that the operator of multiplication $\varphi_x(y) = x \cdot y$ is a $\frac12$-derivation. Hence, by using Theorem \ref{halfderiv4}  we derive the following:

\begin{itemize}
  \item for $e_i, \ 1\leq i\leq n:$
  $$\begin{array}{lll}
\varphi_{e_i}(e_1)=a_{i,1}e_1+\sum\limits_{t=3}^{n}a_{i,t}e_t, \ \varphi_{e_i}(e_t)=a_{i,1}e_t, \ 2\leq t\leq n, \\[1mm]
\varphi_{e_i}(x)=\sum\limits_{t=2}^{n-1}(a_{i,t+1}+\sum\limits_{r=3}^{t-1}\alpha_{r}a_{i,t-r+2})e_t+c_{i,n}e_n+a_{i,1}x, \\[1mm]
\end{array}$$
  \item for $x:$
  $$\begin{array}{lll}
\varphi_{x}(e_1)=a_{x,1}e_1+\sum\limits_{t=3}^{n}a_{x,t}e_t, \quad \varphi_{x}(e_t)=a_{x,1}e_t, \ 2\leq t\leq n, \\[1mm]
\varphi_{x}(x)=\sum\limits_{t=2}^{n-1}(a_{x,t+1}+\sum\limits_{r=3}^{t-1}\alpha_{r}a_{x,t-r+2})e_t+c_{x,n}e_n+a_{x,1}x. \\[1mm]
\end{array}$$
\end{itemize}

By considering equality  $\varphi_{e_i}(e_j)=e_i\cdot e_j=e_j\cdot e_i=\varphi_{e_j}(e_i)$ for $2\leq i\leq n$ and $j=1$, we have
$$a_{i,1}e_1+\sum\limits_{t=3}^{n}a_{i,t}e_t=a_{1,1}e_i.$$
From this, we conclude
$$a_{1,1}=0,\ \ a_{i,1}=a_{i,t}=0, \ 2\leq i\leq n, \ \ 3\leq t\leq n.$$

Also, from the multiplications $\varphi_{e_1}(x)=e_1\cdot x=x\cdot e_1=\varphi_{x}(e_1)$ and $\varphi_{e_i}(x)=e_i\cdot x=x\cdot e_i=\varphi_{x}(e_i),$
we have
$$\sum\limits_{t=2}^{n-1}(a_{1,t+1}+\sum\limits_{r=3}^{t-1}\alpha_{r}a_{1,t-r+2})e_t+c_{1,n}e_n =a_{x,1}e_1+\sum\limits_{t=3}^{n}a_{x,t}e_t \ \mbox{and} \ c_{i,n}e_{n}=0.$$

From these, we conclude
$$a_{1,3}=a_{x,1}=0,\ a_{x,n}=c_{1,n}, \ a_{x,t}=a_{1,t+1}+\sum\limits_{r=3}^{t-1}\alpha_{r}a_{1,t-r+2}, \ 3\leq t \leq n-1,$$  $$c_{i,n}=0, \ 2\leq i \leq n.$$

Thus, we have the algebra ${\bf TP}(\mathfrak{s}^{4}_{n,1}).$
\end{proof}

Now we give descriptions of all transposed Poisson algebra structures on solvable Lie
algebras with nilradical isomorphic to the algebra $\mathfrak{Q}_{2n}$

\begin{theorem} Let $(\mathfrak{r}_{2n+1}(\lambda), \cdot, [-,-])$ be a transposed Poisson algebra structure defined on the Lie algebra $\mathfrak{r}_{2n+1}(\lambda), \lambda\neq 2n-3, \frac{5-2n}{2}$.
Then the multiplication of $(\mathfrak{r}_{2n+1}(\lambda), \cdot)$ has the following form:
$$\begin{array}{lll}
{\bf TP}_1(\mathfrak{r}_{2n+1}(\lambda)):& x\cdot x=e_{2n}; \\[1mm]
{\bf TP}_2(\mathfrak{r}_{2n+1}(\lambda)):& e_2\cdot x=e_{2n}, & x\cdot x=(3-2n)e_{2n-1}; \\[1mm]
{\bf TP}_2(\mathfrak{r}_{2n+1}(\frac{3-2n}{2})):& e_2\cdot x=e_{2n}, & x\cdot x=(3-2n)e_{2n-1}+e_{2n}; \\[1mm]
\end{array}$$
where it is taken into account that the transposed Poisson algebra has its products with respect to the bracket $[-,-]$, and the remaining products are equal to zero.
\end{theorem}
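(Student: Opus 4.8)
The plan is to follow the scheme already used for the previous algebras in this section. By Lemma~\ref{lemma1}, a transposed Poisson structure on $\mathfrak{r}_{2n+1}(\lambda)$ is the same datum as a commutative associative product $\cdot$ for which every left multiplication $\varphi_z(y)=z\cdot y$ is a $\frac12$-derivation. So I would first write each $\varphi_{e_i}$ and $\varphi_x$ in the shape supplied by Theorem~\ref{2halfderiv12}, each with its own parameters. The decisive simplification comes from the standing hypothesis $\lambda\neq 2n-3,\frac{5-2n}{2}$: it makes the two constraints $(2n-3-\lambda)\beta_{2n-1}=0$ and $(2n-5+2\lambda)\alpha_i=0$ nondegenerate, so that $\beta_{2n-1}=0$ and $\alpha_3=\dots=\alpha_{2n}=0$ for all of these $\frac12$-derivations. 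Every $\varphi_z$ then collapses to three parameters $(\alpha_1,\beta_{2n},\delta_{2n})$, with $\varphi_z(e_1)=\alpha_1 e_1$, $\varphi_z(e_2)=\alpha_1 e_2+\beta_{2n}e_{2n}$, $\varphi_z(e_t)=\alpha_1 e_t$ for $3\le t\le 2n$, and $\varphi_z(x)=(3-2n)\beta_{2n}e_{2n-1}+\delta_{2n}e_{2n}+\alpha_1 x$.

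Next I would impose commutativity through $\varphi_{e_i}(e_j)=e_i\cdot e_j=e_j\cdot e_i=\varphi_{e_j}(e_i)$ and $\varphi_{e_i}(x)=\varphi_x(e_i)$. Comparing $\varphi_{e_2}(e_1)$ with $\varphi_{e_1}(e_2)$ at once kills all diagonal parameters (forcing every $a_i=0$ and $a_x=0$), and running through the remaining pairs reduces the whole product to the two-parameter family
\[ e_2\cdot x=x\cdot e_2=\mu e_{2n},\qquad x\cdot x=(3-2n)\mu e_{2n-1}+\nu e_{2n}, \]
with all other products zero. Since $e_{2n-1}$ and $e_{2n}$ then annihilate the algebra under $\cdot$, the associativity condition $a\cdot(b\cdot c)=(a\cdot b)\cdot c$ holds automatically; hence each pair $(\mu,\nu)$ really does define a transposed Poisson structure, and the remaining content of the theorem is the classification of this family up to isomorphism.

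The last step, and the main obstacle, is this change-of-basis analysis. A diagonal automorphism $\phi(e_1)=c_1e_1$, $\phi(e_2)=c_2e_2$, $\phi(x)=x$ rescales $(\mu,\nu)\mapsto\bigl(\mu c_1^{2n-3}c_2,\ \nu c_1^{2n-3}c_2^2\bigr)$; this separates $\mu=0$ (yielding ${\bf TP}_1$ after normalizing $\nu$) from $\mu\neq 0$ (normalize $\mu=1$), but on its own it cannot remove the residual $\nu$. The key device is the extra automorphism $\phi(x)=x+C_2 e_2$: because $\mathrm{ad}_x+C_2\,\mathrm{ad}_{e_2}$ has the same distinct spectrum as $\mathrm{ad}_x$ whenever $\lambda\neq 0$, such a Lie automorphism exists, and its conjugating part forces $\phi(e_{2n-1})=q\bigl(e_{2n-1}+\tfrac{C_2}{\lambda}e_{2n}\bigr)$ and $\phi(e_{2n})=q c_2\,e_{2n}$. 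Transporting the product through $\phi$ via $\phi(a)\cdot'\phi(b)=\phi(a\cdot b)$ gives, for $\mu=1$,
\[ \nu'=\nu q c_2+q C_2\Bigl(\tfrac{3-2n}{\lambda}-2\Bigr)=\nu q c_2+q C_2\,\tfrac{3-2n-2\lambda}{\lambda}. \]
When $3-2n-2\lambda\neq 0$ one solves this for $C_2$ to achieve $\nu'=0$, producing the single structure ${\bf TP}_2(\mathfrak{r}_{2n+1}(\lambda))$; precisely at $\lambda=\frac{3-2n}{2}$ the coefficient $3-2n-2\lambda$ vanishes, $\nu$ can only be rescaled, and normalizing $\nu=1$ yields the additional structure ${\bf TP}_2(\mathfrak{r}_{2n+1}(\frac{3-2n}{2}))$. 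The borderline value $\lambda=0$ (which lies in the admissible range) is handled separately by directly mixing $e_{2n-1}$ and $e_{2n}$, which there share a weight, to the same effect. I expect the careful bookkeeping in the commutativity reduction and the verification that $\phi(x)=x+C_2e_2$ is a genuine automorphism, i.e.\ the diagonalizability of $\mathrm{ad}_x+C_2\,\mathrm{ad}_{e_2}$, to be the most delicate points.
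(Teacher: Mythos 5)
Your proposal follows the paper's route almost step for step: the same reduction via Lemma~\ref{lemma1} and Theorem~\ref{2halfderiv12} (using \eqref{tau1lambda} to kill $\beta_{2n-1}$ and the $\alpha_i$), the same commutativity bookkeeping leading to the two-parameter family $e_2\cdot x=\mu e_{2n}$, $x\cdot x=(3-2n)\mu e_{2n-1}+\nu e_{2n}$, and the same final mechanism: a unipotent change of basis whose effect on $\nu$ carries the coefficient $2\lambda+2n-3$, so $\nu$ can be removed exactly when $\lambda\neq\frac{3-2n}{2}$. Your remark that associativity is automatic (every product lands in elements that annihilate $\cdot$) is correct and worth stating, since the paper leaves it implicit.

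Two points in your normalization step need repair. First, the existence of an automorphism with $\phi(x)=x+C_2e_2$ does not follow from coincidence of the spectra of $\mathrm{ad}_x$ and $\mathrm{ad}_x+C_2\,\mathrm{ad}_{e_2}$: the conjugating map must be an automorphism of $\mathfrak{Q}_{2n}$, not merely an invertible linear map, so the automorphism has to be exhibited explicitly --- which the paper does, taking $\phi(e_1)=e_1+\frac{C_2}{\lambda}e_3$ and propagating through the defining brackets; this is exactly where your ``forced'' formulas for $\phi(e_{2n-1})$ and $\phi(e_{2n})$ come from. Second, and more concretely, your treatment of $\lambda=0$ fails as stated: the map fixing all basis vectors except $e_{2n-1}\mapsto e_{2n-1}+te_{2n}$ is \emph{not} a Lie algebra automorphism, because it violates $[e_{2n-2},e_1]=e_{2n-1}$ (the left side stays $e_{2n-1}$ while the right side acquires $te_{2n}$). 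The correct automorphism at $\lambda=0$ must simultaneously mix the pair $e_1,e_3$, which also share the weight $1$ there: $e_1\mapsto e_1+A_3e_3$, $e_{2n-1}\mapsto e_{2n-1}+A_3e_{2n}$, $x\mapsto x$, all other basis vectors fixed. The paper avoids your case split altogether by taking $A_3$ (the $e_3$-component of the new $e_1$), rather than $C_2=\lambda A_3$, as the primary free parameter; its relation $\beta'=\frac{A_1\beta+(2\lambda+2n-3)A_3\alpha}{A_1^{2n-2}B_2^2}$ then holds for every admissible $\lambda$, including $\lambda=0$, and the single dichotomy $\lambda=\frac{3-2n}{2}$ versus $\lambda\neq\frac{3-2n}{2}$ comes out uniformly. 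If you adopt that parametrization, your argument becomes the paper's proof.
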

\begin{proof} Let $(\mathfrak{r}_{2n+1}(\lambda), \cdot, [-,-])$ be a transposed Poisson algebra structure defined on the Lie algebra $\mathfrak{r}_{2n+1}(\lambda)$ for $\lambda\neq 2n-3, \frac{5-2n}{2}$. Then from relation \eqref{tau1lambda} we get:
$$\beta_{2n-1}=\alpha_i=0, \ 3\leq i\leq 2n.$$
Furthermore we aim to describe the multiplication $\cdot$ by Lemma \ref{lemma1}. Hence, for any element $x \in \mathfrak{r}_{2n+1}(\lambda)$, the multiplication operator $\varphi_x(y) = x \cdot y=y\cdot x=\varphi_y(x)$ is a $\frac12$ -derivation. Hence, by Theorem \ref{2halfderiv12} we have:

  $$\begin{array}{lll}
\varphi_{e_i}(e_1)=a_{i}e_1,\ \varphi_{e_i}(e_2)=a_{i}e_2+b_ie_{2n}, \ \varphi_{e_i}(e_j)=a_{i}e_j, \ 3\leq j\leq 2n, \\[1mm]
\varphi_{e_i}(x)=(3-2n)b_ie_{2n-1}+c_{i}e_{2n}+a_{i}x, \\[1mm]
\varphi_{x}(e_1)=a_{x}e_1,\ \varphi_{x}(e_2)=a_{x}e_2+b_xe_{2n}, \ \varphi_{x}(e_j)=a_{x}e_j, \ 3\leq j\leq 2n, \\[1mm]
\varphi_{x}(x)=(3-2n)b_xe_{2n-1}+c_{x}e_{2n}+a_{x}x. \\[1mm]
\end{array}$$

Firstly, for all $i,j$ from $\varphi_{e_i}(e_j)=e_i\cdot e_j=e_j\cdot e_i=\varphi_{e_j}(e_i),$ we obtain that
$$a_{i}=0,\ 1\leq i\leq 2n, \ b_1=0,\ b_{t}=0, \ 3\leq t\leq 2n.$$

Secondly, we have $\varphi_{e_i}(x)=e_i\cdot x=x\cdot e_i=\varphi_{x}(e_i),$ which implies
$$a_x=0,\ b_{2}=0,\ b_{x}=c_{2}, \ c_{1}=0, \ c_{t}=0, \ 3\leq t \leq 2n.$$

Thus, we obtain
$$e_2\cdot x=\alpha e_{2n}, \ x\cdot x=(3-2n)\alpha e_{2n-1}+\beta e_{2n}.$$

Now we consider the general change of basis:

$$e_1'=\sum\limits_{j=1}^{2n}A_je_j, \ e_2'=\sum\limits_{j=1}^{2n}B_je_j, \ e_{i+1}'=[e_i',e_{1}'], \ 2\leq i\leq 2n-2, $$
$$e_{2n}'=[e_2',e_{2n-1}'], \ x'=Hx+\sum\limits_{t=1}^{2n}C_te_{2n}.$$

We express the new basis elements $\{e'_1, e'_2,\dots, e'_{2n}, x'\}$  via the basis elements\\ $\{e_1, e_2, \dots, e_{2n}, x\}.$  By verifying all the multiplications of the algebra in the new basis, we obtain the relations between the parameters $\{\alpha', \beta'\}$ and $\{\alpha, \beta \}$:

$$\alpha'=\frac{\alpha}{A_1^{2n-3}B_2}, \ \beta'=\frac{A_1\beta+(2\lambda+2n-3)A_3\alpha}{A_1^{2n-2}B_2^2},$$
where $A_1B_2\neq0$.

We have the following possible cases:
\begin{enumerate}
  \item $\alpha=0,$ then we get $\beta\neq0,$ and via automorphism
  $$\phi(x)=x, \ \phi(e_1)=e_1, \ \phi(e_i)=\sqrt{\beta^{-1}}e_i, \ 2\leq i\leq 2n-1, \ \phi(e_{2n})=\beta^{-1}e_{2n},$$
 we obtain the algebra ${\bf TP}_1(\mathfrak{r}_{2n+1}(\lambda))$.
  \item $\alpha\neq0,\ \lambda=\frac{3-2n}{2}$ and $\beta=0,$ then via automorphism
  $$\phi(x)=x, \ \phi(e_1)=e_1, \ \phi(e_i)=\alpha^{-1}e_i, \ 2\leq i\leq 2n-1, \ \phi(e_{2n})=\alpha^{-2}e_{2n},$$
  we have the algebra ${\bf TP}_2(\mathfrak{r}_{2n+1}(\frac{2n-3}{2}))$.
  \item $\alpha\neq0,\ \lambda=\frac{3-2n}{2}$ and $\beta\neq0,$ then by automorphism
  \begin{center}
  $\phi(x)=x, \ \phi(e_1)=\sqrt[2n-3]{\alpha^{-2}\beta}e_1,$
  \end{center}
\begin{center}
  $\phi(e_i)=\sqrt[2n-3]{\alpha^{2n-1-i}\beta^{i+1-2n}}e_i, \ 2\leq i\leq 2n-1, \ \phi(e_{2n})=\beta^{-1}e_{2n},$
\end{center}
 we get the algebra ${\bf TP}_3(\mathfrak{r}_{2n+1}(\frac{3-2n}{2}))$.
  \item $\alpha\neq0$ and $\lambda\neq\frac{3-2n}{2},$ then by automorphism
  \begin{center}
  $\phi(x)=x+\frac{\alpha^{-2}\beta\lambda}{2\lambda-3+2n}e_2, \ \phi(e_1)=e_1+\frac{\alpha^{-2}\beta}{2\lambda-3+2n}e_3, \ \phi(e_i)=\alpha^{-1}e_i, \ 2\leq i\leq 2n-2,$
  \end{center}
\begin{center}
  $\phi(e_{2n-1})=\alpha^{-1}e_{2n-1}+\frac{\alpha^{-3}\beta}{2\lambda-3+2n}e_{2n}, \ \phi(e_{2n})=\alpha^{-2}e_{2n},$
\end{center}
  we obtain the algebra ${\bf TP}_2(\mathfrak{r}_{2n+1}(\lambda \neq\frac{3-2n}{2}))$.
\end{enumerate} \end{proof}

\begin{theorem} Let $(\mathfrak{r}_{2n+1}(2n-3), \cdot, [-,-])$ be a transposed Poisson algebra structure defined on the Lie algebra $\mathfrak{r}_{2n+1}(2n-3)$.
Then the multiplication of $(\mathfrak{r}_{2n+1}(2n-3), \cdot)$ has the following form:

${\bf TP}_1(\mathfrak{r}_{2n+1}(2n-3)):\  x\cdot x=e_{2n}; $

${\bf TP}_2(\mathfrak{r}_{2n+1}(2n-3)): \  e_2\cdot x=e_{2n}, \ x\cdot x=(3-2n)e_{2n-1}; $

${\bf TP}_3(\mathfrak{r}_{2n+1}(2n-3)):\  e_2\cdot x=e_{2n}, \  x\cdot x=(3-2n)e_{2n-1}+e_{2n}; $

${\bf TP}_4(\mathfrak{r}_{2n+1}(2n-3)): \  e_2\cdot e_2=e_{2n}, \  e_2\cdot x=(3-2n)e_{2n-1}; $

${\bf TP}_5(\mathfrak{r}_{2n+1}(2n-3)):\  e_2\cdot e_2=e_{2n}, \  e_2\cdot x=(3-2n)e_{2n-1},\  x\cdot x=e_{2n}; $

$ {\bf TP}_6(\mathfrak{r}_{2n+1}(2n-3)):\  e_2\cdot e_2=e_{2n}, \  e_2\cdot x=(3-2n)e_{2n-1}+e_{2n},\  x\cdot x=(3-2n)e_{2n-1}+\alpha e_{2n}; $

${\bf TP}_7(\mathfrak{r}_{2n+1}(2n-3)):\  e_2\cdot e_2=e_{2n-1}; $

${\bf TP}_8(\mathfrak{r}_{2n+1}(2n-3)):\  e_2\cdot e_2=e_{2n-1}, \   x\cdot x=e_{2n}; $

$ {\bf TP}_9(\mathfrak{r}_{2n+1}(2n-3)): \  e_2\cdot e_2=e_{2n-1}, \  e_2\cdot x=e_{2n},\  x\cdot x=(3-2n)e_{2n-1}+\alpha e_{2n}; $

${\bf TP}_{10}(\mathfrak{r}_{2n+1}(2n-3)):\  \begin{cases}
e_2\cdot e_2=e_{2n-1}+e_{2n},\ e_2\cdot x=(3-2n)e_{2n-1}+\alpha e_{2n}, \\[1mm]
  x\cdot x=(3-2n)\alpha e_{2n-1}+\beta e_{2n};
\end{cases} $\\
where it is taken into account that the transposed Poisson algebra has its products with respect to the bracket $[-,-]$, and the remaining products are equal to zero.
\end{theorem}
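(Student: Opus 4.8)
The plan is to reuse the machinery of the preceding theorem, isolating the single place where the value $\lambda=2n-3$ produces a genuinely different outcome. By Lemma~\ref{lemma1}, a transposed Poisson structure on $\mathfrak{r}_{2n+1}(2n-3)$ is the same datum as a commutative associative product $\cdot$ all of whose left multiplications $\varphi_z(y)=z\cdot y$ are $\frac{1}{2}$-derivations, so I would begin from Theorem~\ref{2halfderiv12} specialized to $\lambda=2n-3$. The decisive point concerns the constraints \eqref{tau1lambda}: the factor $(2n-3-\lambda)$ multiplying $\beta_{2n-1}$ now vanishes identically, so $\beta_{2n-1}$ survives as a \emph{free} parameter, whereas $(2n-5+2\lambda)=6n-11\neq 0$ (for $n\geq 3$) still forces $\alpha_i=0$ for $3\leq i\leq 2n$. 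It is exactly this one extra degree of freedom that lets $e_2\cdot e_2$ acquire a nonzero $e_{2n-1}$-component and thereby inflates the three structures of the generic case into the ten listed here.

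First I would record the left multiplications in the reduced form coming from the specialized half-derivation, so that each $\varphi_{e_i}(e_2)$ carries coefficients of both $e_{2n-1}$ and $e_{2n}$, and then impose commutativity $\varphi_u(v)=\varphi_v(u)$ over all basis pairs. As in the generic proof this collapses everything: all diagonal scalars are killed, $e_1$ and every $e_j$ with $j\geq 3$ act as zero, and the only possibly nonzero products become
$$e_2\cdot e_2=b\,e_{2n-1}+c\,e_{2n},\qquad e_2\cdot x=(3-2n)c\,e_{2n-1}+d\,e_{2n},\qquad x\cdot x=(3-2n)d\,e_{2n-1}+f\,e_{2n},$$
a four-parameter family in $(b,c,d,f)$. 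The term $b\,e_{2n-1}$ is the new feature: the commutativity relation for the pair $\{e_2,x\}$, which in the generic case forced the $e_{2n-1}$-part of $e_2\cdot e_2$ to vanish, is now satisfied by the free coefficient $\beta_{2n-1}$ of $\varphi_x(e_2)$ and imposes nothing on $b$.

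A simplification I would then exploit is that, since all diagonal scalars vanish, the subspace $\mathrm{span}\{e_{2n-1},e_{2n}\}$ sits inside the annihilator of $(\mathfrak{r}_{2n+1}(2n-3),\cdot)$; as every product already lands there, all triple products vanish and associativity $u\cdot(v\cdot w)=(u\cdot v)\cdot w$ holds automatically. Hence no associativity constraints arise, and the whole content of the theorem is the classification of the family $(b,c,d,f)$ up to isomorphism.

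The hard part is therefore the orbit analysis under the automorphism group. I would take the generic grading-compatible change of basis $e_1'=\sum_j A_j e_j$, $e_2'=\sum_j B_j e_j$ with the higher $e_i'$ and $e_{2n}'$ generated by the brackets and $x'=Hx+\sum_t C_t e_t$, compute the induced action on $(b,c,d,f)$, and branch according to the value of $e_2\cdot e_2$. Up to this action the vector $e_2\cdot e_2$ normalizes to one of $0$, $e_{2n}$, $e_{2n-1}$, or $e_{2n-1}+e_{2n}$, which produces the four blocks $\{{\bf TP}_1,{\bf TP}_2,{\bf TP}_3\}$, $\{{\bf TP}_4,{\bf TP}_5,{\bf TP}_6\}$, $\{{\bf TP}_7,{\bf TP}_8,{\bf TP}_9\}$ and $\{{\bf TP}_{10}\}$; within each block I would use the residual stabilizer to normalize $d$ and $f$, scaling a surviving nonzero constant to $1$ where the stabilizer permits and retaining genuine moduli $\alpha,\beta$ where it does not, and finally verify that the ten representatives are pairwise non-isomorphic. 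Tracking how the stabilizer shrinks as $e_2\cdot e_2$ becomes more generic — and hence how the number of independent normal forms drops from three per block to the single two-parameter form ${\bf TP}_{10}$ — is where the real bookkeeping lies.
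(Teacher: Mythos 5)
Your proposal is correct and follows essentially the same route as the paper: specialize Theorem~\ref{2halfderiv12} to $\lambda=2n-3$ (where only $\beta_{2n-1}$ survives as extra freedom), impose commutativity to reach the same four-parameter family $(b,c,d,f)=(\alpha_1,\alpha_2,\alpha_3,\alpha_4)$, and classify it under the same general change of basis, your four normal forms of $e_2\cdot e_2$ matching exactly the paper's case split on $\alpha_1,\alpha_2$ into the blocks ${\bf TP}_1$--${\bf TP}_3$, ${\bf TP}_4$--${\bf TP}_6$, ${\bf TP}_7$--${\bf TP}_9$, ${\bf TP}_{10}$. Your explicit observation that associativity is automatic because all products land in the annihilator is a correct point that the paper uses only implicitly.
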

\begin{proof} Let $(\mathfrak{r}_{2n+1}(2n-3), \cdot, [-,-])$ be a transposed Poisson algebra structure defined on the Lie algebra $\mathfrak{r}_{2n+1}(2n-3)$. Then we aim to describe the multiplication $\cdot$ by Lemma \ref{lemma1}. So for any element of $x \in \mathfrak{r}_{2n+1}(2n-3)$, the operator of multiplication $\varphi_x(y) = x \cdot y=y\cdot x=\varphi_y(x)$ is a $\frac12$ -derivation. Hence, we have

\begin{itemize}
  \item for $e_i, \ 1\leq i\leq 2n:$
  $$\begin{array}{lll}
\varphi_{e_i}(e_1)=a_{i}e_1,\ \varphi_{e_i}(e_2)=a_{i}e_2+b_{1,i}e_{2n-1}+b_{2,i}e_{2n}, \ \varphi_{e_i}(e_j)=a_{i}e_j, \ 3\leq j\leq 2n, \\[1mm]
\varphi_{e_i}(x)=(3-2n)b_{2,i}e_{2n-1}+c_{i}e_{2n}+a_{i}x, \\[1mm]
\end{array}$$
  \item for $x:$
  $$\begin{array}{lll}
\varphi_{x}(e_1)=a_{x}e_1,\ \varphi_{x}(e_2)=a_{x}e_2+b_{1,x}e_{2n-1}+b_{2,x}e_{2n}, \ \varphi_{x}(e_j)=a_{x}e_j, \ 3\leq j\leq 2n, \\[1mm]
\varphi_{x}(x)=(3-2n)b_{2,x}e_{2n-1}+c_{x}e_{2n}+a_{x}x. \\[1mm]
\end{array}$$
\end{itemize}

Firstly, for all $i,j$ from $\varphi_{e_i}(e_j)=e_i\cdot e_j=e_j\cdot e_i=\varphi_{e_j}(e_i),$ we obtain that
$$a_{i}=0,\ 1\leq i\leq 2n, \ b_{1,1}=b_{2,1}=0,\ b_{1,t}=b_{2,t}=0, \ 3\leq t\leq 2n.$$

Secondly, we have $\varphi_{e_i}(x)=e_i\cdot x=x\cdot e_i=\varphi_{x}(e_i),$ which implies
$$a_x=0,\ b_{1,x}=(3-2n)b_{2,2},\ b_{2,x}=c_{2}, \ c_{1}=0, \ c_{t}=0, \ 3\leq t \leq 2n.$$

Thus, we obtain
$$e_2\cdot e_2=\alpha_1 e_{2n-1}+\alpha_2e_{2n}, \ e_2\cdot x=(3-2n)\alpha_2 e_{2n-1}+\alpha_3 e_{2n},$$
$$x\cdot x=(3-2n)\alpha_3 e_{2n-1}+\alpha_4 e_{2n}.$$

Now we consider the general change of basis:
%

$$e_1'=\sum\limits_{j=1}^{2n}A_je_j, \ e_2'=\sum\limits_{j=1}^{2n}B_je_j, \ e_{i+1}'=[e_i',e_{1}'], \ 2\leq i\leq 2n-2, $$
$$e_{2n}'=[e_2',e_{2n-1}'], \ x'=Hx+\sum\limits_{t=1}^{2n}C_te_{2n}.$$

We express the new basis elements $\{e'_1, e'_2,\dots, e'_{2n}, x'\}$  via the basis elements\\ $\{e_1, e_2, \dots, e_{2n}, x\}.$  By verifying all the multiplications of the algebra in the new basis, we obtain the relations between the parameters $\{\alpha'_{1}, \alpha'_{2}, \alpha'_{3}, \alpha'_{4}\}$ and $\{\alpha_{1}, \alpha_{2}, \alpha_{3}, \alpha_{4}\}$:

$$\alpha_1'=\frac{B_2}{A_1^{2n-3}}\alpha_1, \ \alpha_2'=\frac{1}{A_1^{2n-3}}\alpha_2, \
\alpha_3'=\frac{1}{A_1^{2n-3}B_2}\alpha_3, \
\alpha_4'=\frac{1}{A_1^{2n-3}B_2^2}\alpha_4,$$
where $A_1B_2\neq0$.

Then we have the following cases.
\begin{enumerate}
    \item $\alpha_1=\alpha_2=\alpha_3=\alpha_4=0$, then we have trivial algebras, i.e. all commutative associative multiplications are zero.

    \item $\alpha_1=\alpha_2=\alpha_3=0$ and $\alpha_4\neq0$, then by choosing $A_1=1, \ B_2=\sqrt{\alpha_4}$ we have the algebra ${\bf TP}_1(\mathfrak{r}_{2n+1}(2n-3))$.

    \item $\alpha_1=\alpha_2=\alpha_4=0$ and $\alpha_3\neq0,$ then by choosing $A_1=1, \ B_2=\alpha_3$ we have the algebra ${\bf TP}_2(\mathfrak{r}_{2n+1}(2n-3))$.

    \item $\alpha_1=\alpha_2=0, \ \alpha_3\neq0, \ \alpha_4\neq 0,$ then by choosing $A_1=\sqrt[2n-3]{\alpha_3^{2}\alpha_4^{-1}}, \ B_2=\alpha_3^{-1}\alpha_4$ we have the algebra ${\bf TP}_3(\mathfrak{r}_{2n+1}(2n-3))$.

    \item $\alpha_1=0, \ \alpha_2\neq0, \ \alpha_3=\alpha_4=0,$ then by choosing $A_1=\sqrt[2n-3]{\alpha_2}$ we have the algebra ${\bf TP}_4(\mathfrak{r}_{2n+1}(2n-3))$.

    \item $\alpha_1=0, \alpha_2\neq0, \ \alpha_3=0, \ \alpha_4\neq 0,$ then by choosing $A_1=\sqrt[2n-3]{\alpha_2}, \ B_2=\sqrt{\alpha_2^{-1}\alpha_4}$ we have the algebra ${\bf TP}_5(\mathfrak{r}_{2n+1}(2n-3))$.

    \item $\alpha_1=0, \alpha_2\neq0, \ \alpha_3\neq0,$ then by choosing $A_1=\sqrt[2n-3]{\alpha_2}, \ B_2=\alpha_2^{-1}\alpha_3$ we have the algebra ${\bf TP}_6(\mathfrak{r}_{2n+1}(2n-3))$;

    \item $\alpha_1\neq0, \ \alpha_2=\alpha_3=\alpha_4=0,$ then by choosing $A_1=1, \ B_2=\alpha_1^{-1}$ we have the algebra ${\bf TP}_7(\mathfrak{r}_{2n+1}(2n-3))$.

\item $\alpha_1\neq0, \ \alpha_2=\alpha_3=0, \ \alpha_4\neq0, $ then by choosing $A_1=\sqrt[3(2n-3)]{\alpha_1^2\alpha_4}, \ B_2=\sqrt[3]{\alpha_1^{-1}\alpha_4}$ we have the algebra ${\bf TP}_8(\mathfrak{r}_{2n+1}(2n-3))$.

\item $\alpha_1\neq0, \ \alpha_2=0, \ \alpha_3\neq0,$ then by choosing $A_1=\sqrt[2(2n-3)]{\alpha_1\alpha_3}, \ B_2=\sqrt{\alpha_1^{-1}\alpha_3}$ we have the algebra ${\bf TP}_9(\mathfrak{r}_{2n+1}(2n-3))$.

\item $\alpha_1\neq0, \ \alpha_2\neq0,$ then by choosing $A_1=\sqrt[2n-3]{\alpha_2}, \ B_2=\alpha_1^{-1}\alpha_2$ we have the algebra ${\bf TP}_{10}(\mathfrak{r}_{2n+1}(2n-3))$.
\end{enumerate}\end{proof}

\begin{theorem} Let $(\mathfrak{r}_{2n+1}(\frac{5-2n}{2}), \cdot, [-,-])$ be a transposed Poisson algebra structure defined on the Lie algebra $\mathfrak{r}_{2n+1}(\frac{5-2n}{2})$.
Then the multiplication of $(\mathfrak{r}_{2n+1}(\frac{5-2n}{2}), \cdot)$ has the following form:

 ${\bf TP}_1(\mathfrak{r}_{2n+1}(\frac{5-2n}{2})):\ \begin{cases}
e_1\cdot e_1=e_4+\sum\limits_{t=5}^{2n}a_{t}e_t, \\
e_1\cdot e_j=\frac{(-1)^{j-1}}{2}a_{2n+2-j}e_{2n}, \ 3\leq j\leq 2n-2, \\[1mm]
e_1\cdot e_{2n-2}=-\frac{1}{2}e_{2n}, \\[1mm]
e_1\cdot x=\frac{5-2n}{2}e_3+\sum\limits_{t=3}^{2n-2}\frac{2t-2n-1}{2}a_{t+1}e_t+b_2e_{2n},\\[1mm]
e_2\cdot x =b_3e_{2n},\\[1mm]
e_j\cdot x=\frac{(-1)^{j-1}(2n+3-2j)}{4}a_{2n+3-j}e_{2n}, \ 4\leq j\leq 2n-2, \\[1mm]
e_{2n-1}\cdot x=-\frac{4n+1}{4}e_{2n},\\[1mm]
x\cdot x=\frac{(3-2n)(5-2n)}{4}e_2+\sum\limits_{t=3}^{2n-3}\frac{(2t-2n-1)(2t-2n+1)}{4}a_{t+2}e_t+\\+b_3e_{2n-1}+b_{4}e_{2n}; \\
\end{cases}$

$ {\bf TP}_2(\mathfrak{r}_{2n+1}(\frac{5-2n}{2})):\ \begin{cases}
e_1\cdot e_1=\sum\limits_{t=5}^{2n}a_{t}e_t,\
e_1\cdot e_2=b_1e_{2n},\\[1mm]
e_1\cdot e_j=\frac{(-1)^{j-1}}{2}a_{2n+2-j}e_{2n}, \ 3\leq j\leq 2n-3, \\[1mm]
e_1\cdot x=\sum\limits_{t=4}^{2n-2}\frac{2t-2n-1}{2}a_{t+1}e_t+b_1e_{2n-1}+b_2e_{2n},\\[1mm]
e_2\cdot x =b_3e_{2n},\
e_3\cdot x=\frac{1}{2}b_1e_{2n},\\[1mm]
e_j\cdot x=\frac{(-1)^{j-1}(2n+3-2j)}{4}a_{2n+3-j}e_{2n}, \ 4\leq j\leq 2n-2, \\[1mm]
x\cdot x=\sum\limits_{t=3}^{2n-3}\frac{(2t-2n-1)(2t-2n+1)}{4}a_{t+2}e_t+\\+\frac{2n-5}{2}b_{1}e_{2n-2}+b_3e_{2n-1}+b_{4}e_{2n};
\end{cases}$\\
where it is taken into account that the transposed Poisson algebra has its products with respect to the bracket $[-,-]$, and the remaining products are equal to zero.
\end{theorem}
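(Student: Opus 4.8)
The plan is to reproduce, at the exceptional value $\lambda=\frac{5-2n}{2}$, the three-step strategy already used for the generic algebras $\mathfrak{r}_{2n+1}(\lambda)$: first convert the transposed Poisson condition into $\frac12$-derivation data through Lemma~\ref{lemma1}, then impose commutativity and associativity, and finally normalize by a change of basis. The decisive preliminary observation concerns the value of $\lambda$. By Lemma~\ref{lemma1}, for each $z$ the left multiplication $\varphi_z(y)=z\cdot y$ is a $\frac12$-derivation, hence has the form given in Theorem~\ref{2halfderiv12} together with the constraints~\eqref{tau1lambda}. Substituting $\lambda=\frac{5-2n}{2}$ gives $2n-5+2\lambda=0$, so the conditions $(2n-5+2\lambda)\alpha_i=0$ become vacuous and the coefficients $\alpha_3,\dots,\alpha_{2n}$ are \emph{free}, whereas $2n-3-\lambda=\frac{6n-11}{2}\neq0$ for $n\geq3$ still forces $\beta_{2n-1}=0$. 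This is exactly why $\frac{5-2n}{2}$ is excluded from the generic theorem: here the space of $\frac12$-derivations is strictly larger, and it is this extra freedom that produces the leading term $e_1\cdot e_1=e_4+\cdots$ in $\mathbf{TP}_1$.

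Next I would write out $\varphi_{e_i}$ and $\varphi_x$ with parameters indexed by their argument and impose the symmetry relations $\varphi_{e_i}(e_j)=\varphi_{e_j}(e_i)$, $\varphi_{e_i}(e_1)=\varphi_{e_1}(e_i)$ and $\varphi_{e_i}(x)=\varphi_x(e_i)$ coming from commutativity of $\cdot$. Comparing coefficients basis element by basis element, exactly as in the proof of Theorem~\ref{2halfderiv12}, forces the diagonal coefficient to vanish for every argument (so every product lands in the nilradical and no $x$-term survives), kills the off-diagonal data attached to $e_3,\dots,e_{2n}$ as arguments, and identifies the surviving parameters with the single family $\{a_t\}$ (the coefficients of $e_1\cdot e_1$) together with the auxiliary constants $b_1,\dots,b_4$. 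The inductive formula of Theorem~\ref{2halfderiv12} then propagates these through all the products $e_1\cdot e_j$ and $e_j\cdot x$, keeping the signs $(-1)^{j-1}$ and the weights $\tfrac{2j-2n-1}{2}$; this yields a single skeleton multiplication table depending on $\{a_t,b_1,\dots,b_4\}$.

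Finally I would impose the associativity identity $z\cdot(y\cdot w)=(z\cdot y)\cdot w$ on the triples built from the generators $e_1,e_2,x$; this is where the nested quadratic relations among the $a_t$ that appear in the $x\cdot x$ entries are forced, and where the dichotomy emerges according to whether the $e_4$-component of $e_1\cdot e_1$ (equivalently $\alpha_4$) is nonzero. A general change of basis $e_1'=\sum_j A_je_j,\ e_2'=\sum_j B_je_j,\ e_{i+1}'=[e_i',e_1'],\ e_{2n}'=[e_2',e_{2n-1}'],\ x'=Hx+\sum_t C_te_{2n}$ then normalizes the leading constants and separates the two canonical forms $\mathbf{TP}_1(\mathfrak{r}_{2n+1}(\tfrac{5-2n}{2}))$ (when $\alpha_4\neq0$, normalized to $1$) and $\mathbf{TP}_2(\mathfrak{r}_{2n+1}(\tfrac{5-2n}{2}))$ (when $\alpha_4=0$, so that the residual parameter $b_1$ from $e_1\cdot e_2$ survives). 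The main obstacle is purely computational bookkeeping: because the $\frac12$-derivations at this exceptional $\lambda$ carry the full free family $\alpha_3,\dots,\alpha_{2n}$, the associativity constraints turn into the nested quadratic recursions visible in the $x\cdot x$ products, and verifying that the induction propagating $e_1\cdot e_j$ and $e_j\cdot x$ stays consistent with those signs and weights across every index $3\le j\le 2n-1$ is the genuinely delicate part.
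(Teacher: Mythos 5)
Your proposal follows the paper's proof essentially step for step: Lemma~\ref{lemma1} together with Theorem~\ref{2halfderiv12}, the key observation that at $\lambda=\frac{5-2n}{2}$ the constraint $(2n-5+2\lambda)\alpha_i=0$ in \eqref{tau1lambda} becomes vacuous (so the $\alpha_i$ are free) while $\beta_{2n-1}=0$ persists, then commutativity, associativity, and a change-of-basis normalization with the dichotomy on the $e_4$-coefficient $\alpha_4$ of $e_1\cdot e_1$. One small correction of attribution: the quadratic weights $\frac{(2t-2n-1)(2t-2n+1)}{4}a_{t+2}$ in $x\cdot x$ are \emph{not} produced by associativity but already by the commutativity identifications $a_{x,t}=\frac{2t-2n-1}{2}a_{1,t+1}$ substituted into the $\frac12$-derivation form of $\varphi_x(x)$; in the paper associativity contributes only the single relation $a_4b_1=0$, which is exactly what kills $b_1$ in ${\bf TP}_1$ and lets it survive in ${\bf TP}_2$.
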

\begin{proof} Let $(\mathfrak{r}_{2n+1}(\frac{5-2n}{2}), \cdot, [-,-])$ be a transposed Poisson algebra structure defined on the Lie algebra $\mathfrak{r}_{2n+1}(\frac{5-2n}{2})$ and from relation \eqref{tau1lambda} we get $\beta_{2n-1}=0$. Then we aim to describe the multiplication $\cdot$ by Lemma \ref{lemma1}. So for any element $x \in \mathfrak{r}_{2n+1}(\frac{5-2n}{2})$, the multiplication operator $\varphi_x(y) = x \cdot y=y\cdot x=\varphi_y(x)$ is a $\frac12$-derivation. Hence, by Theorem \ref{2halfderiv12} for all $i$, we have

 $$\begin{array}{lll}
\varphi_{e_i}(e_1)=a_{i,1}e_1+\sum\limits_{t=3}^{2n}a_{i,t}e_t,\
\varphi_{e_i}(e_2)=a_{i,1}e_2+b_ie_{2n},\\[1mm] \varphi_{e_i}(e_j)=a_{i,1}e_j+\frac{(-1)^{j-1}}{2}a_{i,2n+2-j}e_{2n}, \ 3\leq j\leq 2n-1, \\[1mm]
\varphi_{e_i}(e_{2n})=a_{i,1}e_{2n}, \ \varphi_{e_i}(x)=\sum\limits_{t=2}^{2n-2}\frac{2t-2n-1}{2}a_{i,t+1}e_t+b_ie_{2n-1}+c_{i}e_{2n}+a_{i,1}x, \\[1mm]
\end{array}$$
  $$\begin{array}{lll}
\varphi_{x}(e_1)=a_{x,1}e_1+ \sum\limits_{t=3}^{2n}a_{x,t}e_t,\
\varphi_{x}(e_2)=a_{x,1}e_2+b_xe_{2n},\\[1mm]
\varphi_{x}(e_j)=a_{x,1}e_j+\frac{(-1)^{j-1}}{2}a_{x,2n+2-j}e_{2n}, \ 3\leq j\leq 2n-1, \\[1mm]
\varphi_{x}(e_{2n})=a_{x,1}e_{2n}, \ \varphi_{x}(x)=\sum\limits_{t=2}^{2n-2}\frac{2t-2n-1}{2}a_{x,t+1}e_t+b_xe_{2n-1}+c_{x}e_{2n}+a_{x,1}x. \\[1mm]
\end{array}$$

Firstly, for all $i,j$ from $\varphi_{e_i}(e_j)=e_i\cdot e_j=e_j\cdot e_i=\varphi_{e_j}(e_i),$ we obtain that
$$a_{i,1}=0,\ 1\leq i\leq 2n, \ a_{2,2n}=b_1,\ a_{i,2n}=\frac{(-1)^{i-1}}{2}a_{1,2n+2-i}, \ 3\leq i\leq 2n-1,$$
$$a_{2n,2n}=0, \ b_t=0, \ 3\leq t\leq 2n, \  a_{i,t}=0, \ 2\leq i\leq 2n, \  3\leq t\leq 2n-1.$$

Secondly, we have $\varphi_{e_i}(x)=e_i\cdot x=x\cdot e_i=\varphi_{x}(e_i),$ which implies

$$a_{1,3}=a_{x,1}=0,\ a_{x,2n-1}=b_1, \ a_{x,2n}=c_1,  \ a_{x,t}=\frac{2t-2n-1}{2}a_{1,t+1}, \ 3\leq t\leq 2n-2,$$

$$b_{2}=c_{2n}=0, \ b_{x}=c_{2},  \ c_{3}=\frac{1}{2}b_1, \ c_{i}=\frac{(-1)^{i-1}(2n+3-2i)}{4}a_{1,2n+3-i}, \ 4 \leq i \leq 2n-1.$$


Thus, we obtain
$$\begin{array}{lll}
e_1\cdot e_1=\sum\limits_{t=4}^{2n}a_{t}e_t,\ e_1\cdot e_2=b_1e_{2n},\
e_1\cdot e_j=\frac{(-1)^{j-1}}{2}a_{2n+2-j}e_{2n}, \ 3\leq j\leq 2n-2, \\[1mm]
e_1\cdot x=\sum\limits_{t=3}^{2n-2}\frac{2t-2n-1}{2}a_{t+1}e_t+b_1e_{2n-1}+b_2e_{2n},\
e_2\cdot x =b_3e_{2n},\
e_3\cdot x=\frac{1}{2}b_1e_{2n},\\[1mm]
e_j\cdot x=\frac{(-1)^{j-1}(2n+3-2j)}{4}a_{2n+3-j}e_{2n}, \ 4\leq j\leq 2n-1, \\[1mm]
x\cdot x=\sum\limits_{t=2}^{2n-3}\frac{(2t-2n-1)(2t-2n+1)}{4}a_{t+2}e_t+\frac{2n-5}{2}b_{1}e_{2n-2}+b_3e_{2n-1}+b_{4}e_{2n}. \\[1mm]
\end{array}$$

If we check for associativity, we get the relation $a_4b_1=0$.

Similarly, by using the multiplication of Lie algebra $\mathfrak{r}_{2n+1}(\frac{5-2n}{2})$ we consider the general basis change:
$$e_1'=\sum\limits_{t=1}^{2n}A_te_t, \ e_2'=\sum\limits_{t=1}^{2n}B_te_t, \ x'=Hx+\sum\limits_{t=1}^{2n}C_te_t.$$
Then the product $e_1'\cdot e_1'=\sum\limits_{i=4}^{2n}\alpha_{i}'e_i'$ gives
$$\alpha_4'=\frac{\alpha_4}{B_2}.$$
We have the following cases.

\begin{enumerate}
    \item Let $\alpha_4\neq0.$ Then we derive $\alpha_4'=1, \ b_1=0$ and in this case we obtain the algebra ${\bf TP}_1(\mathfrak{r}_{2n+1}(\frac{5-2n}{2})).$

    \item Let $\alpha_4=0$. Then we have the algebra ${\bf TP}_2(\mathfrak{r}_{2n+1}(\frac{5-2n}{2})).$
\end{enumerate}
\end{proof}

\begin{theorem} Let $(\mathfrak{r}_{2n+1}(2-n,\varepsilon), \cdot, [-,-])$ be a transposed Poisson algebra structure defined on the Lie algebra $\mathfrak{r}_{2n+1}(2-n,\varepsilon)$.
Then the multiplication of $(\mathfrak{r}_{2n+1}(2-n,\varepsilon), \cdot)$ has the following form:
$$\begin{array}{lll}
{\bf TP}_1(\mathfrak{r}_{2n+1}(2-n,\varepsilon)):& x\cdot x=e_{2n}; \\[1mm]
{\bf TP}_2(\mathfrak{r}_{2n+1}(2-n,\varepsilon)):& e_2\cdot x=e_{2n}, & x\cdot x=(3-2n)e_{2n-1}; \\[1mm]
\end{array}$$
where it is taken into account that the transposed Poisson algebra has its products with respect to the bracket $[-,-]$, and the remaining products are equal to zero.
\end{theorem}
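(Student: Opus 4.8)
The plan is to follow exactly the strategy already used for the preceding members of the $\mathfrak{r}_{2n+1}$ family, combining Lemma~\ref{lemma1} with the explicit list of $\frac12$-derivations from Theorem~\ref{2halfderiv2e}. First I would invoke Lemma~\ref{lemma1}: for every basis vector $w$ the left multiplication $\varphi_w(y)=w\cdot y$ is a $\frac12$-derivation, so by Theorem~\ref{2halfderiv2e} it is governed by three scalars $a_w,b_w,c_w$,
$$\varphi_w(e_i)=a_we_i\ (i\neq2),\quad \varphi_w(e_2)=a_we_2+b_we_{2n},\quad \varphi_w(x)=(3-2n)b_we_{2n-1}+c_we_{2n}+a_wx.$$

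Next I would impose commutativity $\varphi_w(v)=\varphi_v(w)$ on all pairs of basis vectors. The comparisons $\varphi_{e_i}(e_j)=\varphi_{e_j}(e_i)$ force $a_{e_i}=0$ for all $i$ and $b_{e_i}=0$ for every $i\neq2$; the comparisons $\varphi_{e_i}(x)=\varphi_x(e_i)$ then kill $c_{e_i}$ for $i\neq2$, give $a_x=0$, force $b_{e_2}=0$ (here $3-2n\neq0$ is used), and identify $c_{e_2}=b_x$. The only surviving parameters are $\alpha:=b_x$ and $\beta:=c_x$, and the multiplication collapses to
$$e_2\cdot x=\alpha e_{2n},\qquad x\cdot x=(3-2n)\alpha e_{2n-1}+\beta e_{2n},$$
all remaining products being zero. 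The crucial observation at this point is that the image of $\cdot$ lies in $\langle e_{2n-1},e_{2n}\rangle$, while $\varphi_{e_{2n-1}}$ and $\varphi_{e_{2n}}$ both vanish identically; hence every triple product is zero and associativity holds automatically, so that no further relations are imposed. This recovers precisely the two-parameter family that appeared for $\mathfrak{r}_{2n+1}(\lambda)$.

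Finally I would classify the pairs $(\alpha,\beta)$ up to isomorphism. Mirroring the earlier proof, I would take the general change of basis $e_1'=\sum_jA_je_j$, $e_2'=\sum_jB_je_j$, $x'=Hx+\sum_tC_te_t$ with $e_{i+1}'=[e_i',e_1']$ and $e_{2n}'=[e_2',e_{2n-1}']$, and recompute $e_2'\cdot x'$ and $x'\cdot x'$. The eigenvalue normalisation $[e_2',x']=(2-n)e_2'$ forces $H=1$ (since $n\geq3$), and tracking leading coefficients yields $e_{2n}'=B_2^2A_1^{2n-3}e_{2n}$ together with a transformation of the form
$$\alpha'=\frac{\alpha}{A_1^{2n-3}B_2},\qquad \beta'=\frac{1}{B_2^2A_1^{2n-3}}\bigl(\beta+2C_2\alpha-\cdots\bigr).$$
When $\alpha\neq0$ the free parameter multiplying $\alpha$ in $\beta'$ lets me set $\beta'=0$ and then scale $\alpha'=1$, producing ${\bf TP}_2$; when $\alpha=0$ but $\beta\neq0$ I scale $\beta'=1$, producing ${\bf TP}_1$; and $(\alpha,\beta)=(0,0)$ is the trivial structure. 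I expect the main obstacle to lie in this last step: pinning down the exact transformation law, in particular the $e_{2n}$-component $\rho$ of $e_{2n-1}'$ and confirming that the coefficient of $\alpha$ available for eliminating $\beta$ is genuinely nonzero, which is exactly where the special value $2-n$ and the role of the term $\varepsilon e_{2n}$ in $[e_1,x]$ must be carried through carefully. Since the resulting normalisation will not involve $\varepsilon$, both representatives hold uniformly for $\varepsilon=\pm1$.
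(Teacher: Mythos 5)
Your proposal is correct and follows essentially the same route as the paper: Lemma~\ref{lemma1} together with Theorem~\ref{2halfderiv2e}, then commutativity to cut the structure down to the two-parameter family $e_2\cdot x=\alpha e_{2n}$, $x\cdot x=(3-2n)\alpha e_{2n-1}+\beta e_{2n}$, and finally a change of basis giving $\alpha'=\alpha/(A_1^{2n-3}B_2)$ and $\beta'=(A_1\beta+A_3\alpha)/(A_1^{2n-2}B_2^2)$, so that $\beta$ is absorbed when $\alpha\neq0$ and normalised to $1$ when $\alpha=0$, exactly the paper's case split. The only cosmetic differences are that the paper's free parameter killing $\beta$ is $A_3$ rather than your $C_2$ (the bracket relations in fact force $C_2=(2-n)A_3/A_1$, so the two are equivalent), and your explicit remark that associativity holds automatically is a small addition the paper leaves implicit.
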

\begin{proof} Let $(\mathfrak{r}_{2n+1}(2-n,\varepsilon), \cdot, [-,-])$ be a transposed Poisson algebra structure defined on the Lie algebra $\mathfrak{r}_{2n+1}(2-n,\varepsilon)$. Then by Lemma \ref{lemma1} for any element of $x \in \mathfrak{r}_{2n+1}(2-n,\varepsilon)$, there is a related  $\frac12$-derivation $\varphi_x$ of  $(\mathfrak{r}_{2n+1}(2-n,\varepsilon), [-,-])$ such that $\varphi_x(y) = x \cdot y=y \cdot x=\varphi_y(x)$, with $y\in \mathfrak{r}_{2n+1}(2-n,\varepsilon)$. Therefore, by Theorem \ref{2halfderiv2e} for any $i$, we have
  $$\begin{array}{lll}
\varphi_{e_i}(e_1)=a_{i}e_1,\ \varphi_{e_i}(e_2)=a_{i}e_2+b_ie_{2n}, \ \varphi_{e_i}(e_j)=a_{i}e_j, \ 3\leq j\leq 2n, \\[1mm]
\varphi_{e_i}(x)=(3-2n)b_ie_{2n-1}+c_{i}e_{2n}+a_{i}x, \\[1mm]
\varphi_{x}(e_1)=a_{x}e_1,\ \varphi_{x}(e_2)=a_{x}e_2+b_xe_{2n}, \ \varphi_{x}(e_j)=a_{x}e_j, \ 3\leq j\leq 2n, \\[1mm]
\varphi_{x}(x)=(3-2n)b_xe_{2n-1}+c_{x}e_{2n}+a_{x}x. \\[1mm]
\end{array}$$

Considering the equalities $\varphi_{e_i}(e_j)=e_i\cdot e_j=e_j\cdot e_i=\varphi_{e_j}(e_i)$ and $\varphi_{e_i}(x)=e_i\cdot x=x\cdot e_i=\varphi_{x}(e_i)$ for all $i, j\in\{1,2,\ldots,2n\}$, we have
$$a_{i}=0,\ 1\leq i\leq 2n, \ b_1=0,\ b_{t}=0,  \ a_x=0,\ b_{2}=0,\ b_{x}=c_{2}, \ c_{1}=0, \ c_{t}=0, \ 3\leq t \leq 2n.$$

Thus, we obtain
$$e_2\cdot x=\alpha e_{2n}, \quad x\cdot x=(3-2n)\alpha e_{2n-1}+\beta e_{2n}.$$

Now we consider the general change of basis:

$$e_1'=\sum\limits_{j=1}^{2n}A_je_j, \ e_2'=\sum\limits_{j=1}^{2n}B_je_j, \ e_{i+1}'=[e_i',e_{1}'], \ 2\leq i\leq 2n-2, $$
$$e_n'=[e_2',e_{2n-1}'], \ x'=Hx+\sum\limits_{t=1}^{2n}C_te_{2n}.$$

We express the new basis elements $\{e'_1, e'_2,\dots, e'_{2n}, x'\}$  via the basis elements\\ $\{e_1, e_2, \dots, e_{2n}, x\}.$  By verifying all the multiplications of the algebra in the new basis, we obtain the relations between the parameters $\{\alpha', \beta'\}$ and $\{\alpha, \beta \}$:

$$\alpha'=\frac{\alpha}{A_1^{2n-3}B_2}, \ \beta'=\frac{A_1\beta+A_3\alpha}{A_1^{2n-2}B_2^2},$$
where $A_1B_2\neq0$.

Since we are only interested in non-trivial transposed Poisson algebra structures, therefore we have the following possible cases:
\begin{enumerate}
  \item $\alpha=0,$ then we have $\beta\neq0$ and via isomorphism
  $$\phi(x)=x, \ \phi(e_1)=\beta^{-1}e_1, \ \phi(e_i)=\beta^{n-i}e_i, \ 2\leq i\leq 2n-1, \ \phi(e_{2n})=\beta^{-1}e_{2n},$$
we obtain the algebra ${\bf TP}_1(\mathfrak{r}_{2n+1}(2-n,\varepsilon))$.

  \item $\alpha\neq0,$ then by choosing the isomorphism
  $$\phi(e_i)=\sqrt[n-1]{\alpha^{n-i}}e_i, \ 2\leq i\leq 2n-2, \ \phi(e_1)=\sqrt[n-1]{\alpha^{-1}}e_1+\beta\sqrt[n-1]{\alpha^{-2}}e_3,$$
  $$\phi(x)=x-(n-2)\beta\sqrt[n-1]{\alpha^{-1}}e_2, \ \phi(e_{2n-1})=\alpha^{-1}e_{2n-1}-\beta\alpha^{-2}e_{2n}, \ \phi(e_{2n})=\sqrt[n-1]{\alpha^{-1}}e_{2n},$$
 we find the algebra ${\bf TP}_2(\mathfrak{r}_{2n+1}(2-n,\varepsilon))$.
\end{enumerate}\end{proof}

\begin{theorem} Let $(\mathfrak{r}_{2n+1}(\lambda_{5},\ldots,\lambda_{2n-1}), \cdot, [-,-])$ be a transposed Poisson algebra structure defined on the Lie algebra $\mathfrak{r}_{2n+1}(\lambda_{5},\ldots,\lambda_{2n-1})$.
Then the multiplication of\\ $(\mathfrak{r}_{2n+1}(\lambda_{5},\ldots,\lambda_{2n-1}), \cdot)$ has the following form:
$$\begin{array}{llll}
{\bf TP}_1(\mathfrak{r}_{2n+1}(\lambda_{5},\ldots,\lambda_{2n-1})):& x \cdot x=e_{2n}; \\[1mm]
{\bf TP}_2(\mathfrak{r}_{2n+1}(\lambda_{5},\ldots,\lambda_{2n-1})):& e_2\cdot x=e_{2n}; \\[1mm]
{\bf TP}_3(\mathfrak{r}_{2n+1}(\lambda_{5},\ldots,\lambda_{2n-1})):& e_2\cdot x=e_{2n}, & x \cdot x=e_{2n};\\[1mm]
{\bf TP}_4(\mathfrak{r}_{2n+1}(\lambda_{5},\ldots,\lambda_{2n-1})):& e_2\cdot e_2=e_{2n};\\[1mm]
{\bf TP}_5(\mathfrak{r}_{2n+1}(\lambda_{5},\ldots,\lambda_{2n-1})):& e_2\cdot e_2=e_{2n}, & x \cdot x=e_{2n};\\[1mm]
{\bf TP}_6(\mathfrak{r}_{2n+1}(\lambda_{5},\ldots,\lambda_{2n-1})):& e_2\cdot e_2=e_{2n}, & e_2 \cdot x=e_{2n}, & x \cdot x=\alpha e_{2n};\\[1mm]
\end{array}$$
where it is taken into account that the transposed Poisson algebra has its products with respect to the bracket $[-,-]$, and the remaining products are equal to zero.
\end{theorem}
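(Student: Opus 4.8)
The plan is to follow the scheme already used in the preceding proofs of this section: convert the problem into a statement about $\frac12$-derivations via Lemma~\ref{lemma1}, use commutativity to pin the multiplication down to a handful of scalars, and then quotient out by a change of basis. First I would invoke Lemma~\ref{lemma1}: for every $z$ the operator $\varphi_z(y)=z\cdot y$ is a $\frac12$-derivation, and commutativity gives $\varphi_{e_i}(e_j)=\varphi_{e_j}(e_i)$ and $\varphi_{e_i}(x)=\varphi_x(e_i)$. Plugging in the explicit shape of the $\frac12$-derivations from Theorem~\ref{2halfderiv3} (each $\varphi_{e_i}$ and $\varphi_x$ carrying its own parameters $a,b,c$) and imposing these two identities forces all scalar parts $a$ to vanish and kills every $b_i,c_i$ except those attached to $e_2$ and $x$. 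What remains is the three-parameter family
\[
e_2\cdot e_2=\alpha_1 e_{2n},\qquad e_2\cdot x=\alpha_2 e_{2n},\qquad x\cdot x=\alpha_3 e_{2n},
\]
with all other products zero. The coefficient comparison is routine; the only mild care is at the indices $i=2$ and $i=2n$, where the top vector $e_{2n}$ enters.

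Next I would observe that associativity is automatic here rather than a source of extra relations. Every product lands in $\langle e_{2n}\rangle$, and since the previous step gives $\varphi_{e_{2n}}=0$, the element $e_{2n}$ annihilates the algebra under $\cdot$. Hence every associator $(u\cdot v)\cdot w$ and $u\cdot(v\cdot w)$ vanishes, so each member of the family is a genuine commutative associative product and, by Lemma~\ref{lemma1}, a transposed Poisson structure.

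The classification up to isomorphism is the real work. I would take the general basis change $e_1'=\sum A_t e_t$, $e_2'=\sum B_t e_t$, $e_{i+1}'=[e_i',e_1']$, $e_{2n}'=[e_2',e_{2n-1}']$, $x'=Hx+\sum C_t e_t$, and track how $e_{2n}$ and the three products rescale. The two governing computations are $e_{2n}'=A_1^{2n-3}B_2^2\,e_{2n}$ (obtained by iterating $[e_i,e_1]=e_{i+1}$ and using $[e_2,e_{2n-1}]=e_{2n}$) and the forced normalisation $H=1$, which comes from matching leading terms in $[e_{2n},x]=2e_{2n}$. These produce the transformation rules $\alpha_1'=\alpha_1/A_1^{2n-3}$, $\alpha_2'=\alpha_2/(B_2A_1^{2n-3})$, $\alpha_3'=\alpha_3/(B_2^2A_1^{2n-3})$. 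A case analysis on the vanishing pattern of $(\alpha_1,\alpha_2,\alpha_3)$ then yields exactly the six representatives: the single-parameter orbits $\mathbf{TP}_1,\mathbf{TP}_2,\mathbf{TP}_4$; the two-parameter orbits $\mathbf{TP}_3,\mathbf{TP}_5$ (where two free scalars $A_1,B_2$ suffice to normalise both surviving constants); and the one-parameter family $\mathbf{TP}_6$, in which $\alpha_1,\alpha_2\neq0$ uses up $A_1$ on $\alpha_1$ and $B_2$ on $\alpha_2$, leaving the combination $\alpha_1\alpha_3/\alpha_2^2$ as an isomorphism invariant that cannot be normalised away.

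The main obstacle, and the step demanding the most care, is this last reduction. One must verify that the admissible basis changes do not allow an $e_2$-component to be absorbed into $x'$: if some automorphism could send $x\mapsto x+ce_2$ with $c\neq0$, then the cross term it produces in $e_2'\cdot x'$ would let one kill $\alpha_2$ whenever $\alpha_1\neq0$, and $\mathbf{TP}_6$ would collapse into $\mathbf{TP}_4$ or $\mathbf{TP}_5$. Confirming that the induced action is the pure scaling above—so that $\alpha_1\alpha_3/\alpha_2^2$ is a genuine modulus—is exactly the point where the specific structure of $\mathfrak{r}_{2n+1}(\lambda_5,\ldots,\lambda_{2n-1})$ must be used.
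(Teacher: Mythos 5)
Your proposal follows the paper's proof step for step: Lemma~\ref{lemma1} together with Theorem~\ref{2halfderiv3} and commutativity reduce the product to the three constants $e_2\cdot e_2=\alpha_1 e_{2n}$, $e_2\cdot x=\alpha_2 e_{2n}$, $x\cdot x=\alpha_3 e_{2n}$, and a change of basis is then used to normalise $(\alpha_1,\alpha_2,\alpha_3)$. Your remark that associativity is automatic (all products lie in $\langle e_{2n}\rangle$, which is annihilated by every multiplication operator) is correct, and it is a point the paper passes over in silence. However, your proof stops exactly at the step that carries all of the content: you write that one ``must verify'' that no admissible basis change can push an $e_2$-component into $x'$, i.e.\ that the action on $(\alpha_1,\alpha_2,\alpha_3)$ is the pure scaling $\alpha_1'=\alpha_1/A_1^{2n-3}$, $\alpha_2'=\alpha_2/(A_1^{2n-3}B_2)$, $\alpha_3'=\alpha_3/(A_1^{2n-3}B_2^2)$, but you never carry that verification out. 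Since everything preceding it is routine, leaving it as a promissory note is a genuine gap.

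Moreover, the gap cannot be closed the way you expect, because the statement you postpone is false. The family $\mathfrak{r}_{2n+1}(\lambda_5,\ldots,\lambda_{2n-1})$ contains the algebra with all $\lambda_{2k+1}=0$, and for it (any $n\geq3$) the linear map $\phi$ given by $\phi(e_1)=e_1+ce_3$, $\phi(e_{2n-1})=e_{2n-1}+ce_{2n}$, $\phi(x)=x+ce_2$, and $\phi=\mathrm{id}$ on the remaining basis vectors, is a Lie algebra automorphism: for instance $[\phi(e_1),\phi(x)]=c[e_1,e_2]+c[e_3,x]=-ce_3+ce_3=0$, $[\phi(e_{2n-2}),\phi(e_1)]=e_{2n-1}+c[e_{2n-2},e_3]=\phi(e_{2n-1})$, and $[\phi(e_{2n-1}),\phi(x)]=e_{2n-1}-ce_{2n}+2ce_{2n}=\phi(e_{2n-1})$; the other brackets are checked the same way. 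Transporting the product through $\phi$ sends $(\alpha_1,\alpha_2,\alpha_3)$ to $(\alpha_1,\ \alpha_2+c\alpha_1,\ \alpha_3+2c\alpha_2+c^2\alpha_1)$, so the cross terms you were worried about really do occur: if $\alpha_1\neq0$ one can kill $\alpha_2$ (so ${\bf TP}_6$ collapses into structures of type ${\bf TP}_4$ or ${\bf TP}_5$), and if $\alpha_1=0\neq\alpha_2$ one can kill $\alpha_3$ (so ${\bf TP}_3$ collapses into ${\bf TP}_2$); in particular $\alpha_1\alpha_3/\alpha_2^2$ is not an isomorphism invariant. Nor is this special to the all-zero parameters: the condition $[\phi(e_1),\phi(x)]=0$ never forces the $e_2$-coefficient of $\phi(x)$ to vanish, it only ties it to the $e_3$-coefficient of $\phi(e_1)$ (for $n=3$, $\lambda_5=1$ one can check that such automorphisms survive all remaining bracket conditions, the only extra constraint being that $A_1$ is a root of unity, which in turn obstructs the normalisation of $\alpha_1$ to $1$). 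So the step you deferred is precisely where the argument breaks down; be aware that the paper's own proof asserts the pure-scaling transformation rules at this same point without justification, so it offers no way to fill your gap either.
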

\tolerance=1000
\begin{proof} Let $(\mathfrak{r}_{2n+1}(\lambda_{5},\ldots,\lambda_{2n-1}), \cdot, [-,-])$ be a transposed Poisson algebra structure defined on the $\mathfrak{r}_{2n+1}(\lambda_{5},\ldots,\lambda_{2n-1})$. Then for any element of $x \in \mathfrak{r}_{2n+1}(\lambda_{5},\ldots,\lambda_{2n-1})$ the linear operator  $\varphi_x(y) = x \cdot y$ is a $\frac12$-derivation. Therefore, according to Theorem \ref{2halfderiv3} we derive
 $$\begin{array}{lll}
\varphi_{e_i}(e_1)=a_{i}e_1,\ \varphi_{e_i}(e_2)=a_{i}e_2+b_ie_{2n}, \ \varphi_{e_i}(e_j)=a_{i}e_j, \ 3\leq j\leq 2n, \ \varphi_{e_i}(x)=c_{i}e_{2n}+a_{i}x, \\[1mm]
\varphi_{x}(e_1)=a_{x}e_1,\ \varphi_{x}(e_2)=a_{x}e_2+b_xe_{2n}, \ \varphi_{x}(e_j)=a_{x}e_j, \ 3\leq j\leq 2n, \ \varphi_{x}(x)=c_{x}e_{2n}+a_{x}x. \\[1mm]
\end{array}$$

From equalities $\varphi_{e_i}(e_j)=e_i\cdot e_j=e_j\cdot e_i=\varphi_{e_j}(e_i)$ and $\varphi_{e_i}(x)=e_i\cdot x=x\cdot e_i=\varphi_{x}(e_i)$ we derive
$$a_{i}=0,\ 1\leq i\leq 2n, \ b_1=0,\ b_{t}=0, \ a_x=0,\  b_{x}=c_{2}, \ c_{1}=0, \ c_{t}=0, \ 3\leq t \leq 2n.$$

Thus, we obtain
$$e_2\cdot e_2=\alpha e_{2n}, \ e_2\cdot x=\beta e_{2n}, \ x\cdot x=\gamma e_{2n}.$$

Now we consider the general change of basis:

$$e_1'=\sum\limits_{j=1}^{2n}A_je_j, \ e_2'=\sum\limits_{j=1}^{2n}B_je_j, \ e_{i+1}'=[e_i',e_{1}'], \ 2\leq i\leq 2n-2, $$
$$e_n'=[e_2',e_{2n-1}'], \ x'=Hx+\sum\limits_{t=1}^{2n}C_te_{2n}.$$

We express the new basis elements $\{e'_1, e'_2,\dots, e'_{2n}, x'\}$  via the basis elements\\ $\{e_1, e_2, \dots, e_{2n}, x\}.$  By verifying all the multiplications of the algebra in the new basis, we obtain the relations between the parameters $\{\alpha', \beta', \gamma'\}$ and $\{\alpha, \beta, \gamma \}$:

$$\alpha'=\frac{\alpha}{A_1^{2n-3}}, \ \beta'=\frac{\beta}{A_1^{2n-3}B_2},\ \gamma'=\frac{\gamma}{A_1^{2n-3}B_2^2}$$
where $A_1B_2\neq0$.

To obtain only non-trivial transposed Poisson algebra structures, we have the following possible cases:
\begin{enumerate}
  \item $\alpha=0, \ \beta=0$ and $\gamma\neq0,$ then via automorphism
  $$\phi(x)=x, \ \phi(e_1)=e_1, \ \phi(e_i)=\sqrt{\gamma^{-1}}e_i, \ 2\leq i\leq 2n-1, \ \phi(e_{2n})=\gamma^{-1}e_{2n},$$
we get the algebra ${\bf TP}_1(\mathfrak{r}_{2n+1}(\lambda_{5},\ldots,\lambda_{2n-1}))$.

  \item $\alpha=0, \ \beta\neq0$ and $\gamma=0,$ then via automorphism
  $$\phi(x)=x, \ \phi(e_1)=e_1, \ \phi(e_i)=\beta^{-1}e_i, \ 2\leq i\leq 2n-1, \ \phi(e_{2n})=\beta^{-2}e_{2n},$$
  we have the algebra ${\bf TP}_2(\mathfrak{r}_{2n+1}(\lambda_{5},\ldots,\lambda_{2n-1}))$.

  \item $\alpha=0, \ \beta\neq0$ and $\gamma\neq0,$ then via automorphism
  \[\phi(e_1)=\sqrt[2n-3]{\gamma\beta^{-2}}e_1, \ \phi(e_i)=\beta\gamma^{-1}\sqrt[2n-3]{(\gamma\beta^{-2})^{i-2}}e_i, \ 2\leq i\leq 2n-1,\] \[\phi(x)=x, \ \phi(e_{2n})=\gamma^{-1}e_{2n},\]
we obtain the algebra ${\bf TP}_3(\mathfrak{r}_{2n+1}(\lambda_{5},\ldots,\lambda_{2n-1}))$.

  \item $\alpha\neq0, \ \beta=0$ and $\gamma=0,$ then via automorphism
  $$\phi(x)=x, \ \phi(e_1)=\sqrt[2n-3]{\alpha^{-1}}e_1, \ \phi(e_i)=\sqrt[2n-3]{\alpha^{2-i}}e_i, \ 2\leq i\leq 2n-1, \ \phi(e_{2n})=\alpha^{-1}e_{2n},$$
 we find the algebra ${\bf TP}_4(\mathfrak{r}_{2n+1}(\lambda_{5},\ldots,\lambda_{2n-1}))$.

  \item $\alpha\neq0, \ \beta=0$ and $\gamma\neq0,$ then via automorphism
  \[ \phi(e_1)=\sqrt[2n-3]{\alpha^{-1}}e_1, \ \phi(e_i)=\sqrt[4n-6]{\alpha^{2n-2i+1}\gamma^{3-2n}}e_i, \ 2\leq i\leq 2n-1, \] \[\phi(x)=x, \  \phi(e_{2n})=\gamma^{-1}e_{2n},\]
 we derive the algebra ${\bf TP}_5(\mathfrak{r}_{2n+1}(\lambda_{5},\ldots,\lambda_{2n-1}))$.

  \item $\alpha\neq0$ and $\beta\neq0,$ then via automorphism
  \[\phi(e_1)=\sqrt[2n-3]{\alpha^{-1}}e_1, \ \phi(e_i)=\sqrt[2n-3]{\alpha^{2n-i-1}}\beta^{-1}e_i, \ 2\leq i\leq 2n-1,\] \[\phi(x)=x, \ \phi(e_{2n})=\alpha\beta^{-2}e_{2n},\]
 we establish the algebra ${\bf TP}_6(\mathfrak{r}_{2n+1}(\lambda_{5},\ldots,\lambda_{2n-1}))$. \end{enumerate} \end{proof}

 Now we give a description of transposed Poisson algebra structures on solvable Lie algebras with naturally graded filiform nilradical $\mathfrak{Q}_{2n}$  with codimension two.

\begin{theorem} Let $(\mathfrak{r}_{2n+2}, \cdot, [-,-])$ be a transposed Poisson algebra structure defined on the Lie algebra $\mathfrak{r}_{2n+2}$.
Then the multiplication of $(\mathfrak{r}_{2n+2}, \cdot)$ has the following form:
$${\bf TP}(\mathfrak{r}_{2n+2}):\ x_1\cdot x_1=(2n+1)^2 e_{2n}, \  x_1\cdot x_2=2(2n+1)e_{2n}, \ x_2\cdot x_2=4e_{2n},$$
where it is taken into account that the transposed Poisson algebra has its products with respect to the bracket $[-,-]$, and the remaining products are equal to zero.
\end{theorem}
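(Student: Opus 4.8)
The plan is to invoke Lemma~\ref{lemma1}, so that a transposed Poisson structure $\cdot$ on $\mathfrak{r}_{2n+2}$ is precisely a commutative associative product whose every left multiplication $\varphi_z(y)=z\cdot y$ is a $\frac12$-derivation. First I would feed each generator into Theorem~\ref{2halfderiv2}, recording $\varphi_{e_i},\varphi_{x_1},\varphi_{x_2}$ in the two-scalar form attached to each $z\in\{e_1,\dots,e_{2n},x_1,x_2\}$: namely $\varphi_z(e_j)=a_z e_j$, $\varphi_z(x_1)=(2n+1)b_z e_{2n}+a_z x_1$ and $\varphi_z(x_2)=2b_z e_{2n}+a_z x_2$. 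This replaces the unknown product by the finitely many parameters $a_z,b_z$.

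Next I would exploit commutativity $\varphi_z(w)=z\cdot w=w\cdot z=\varphi_w(z)$ over all pairs of generators. Comparing $e_i\cdot e_j$ with $e_j\cdot e_i$ for $i\neq j$ forces $a_i=0$ for every $i$ (there are at least two distinct indices since $2n\geq 6$); comparing $e_i\cdot x_1$ with $x_1\cdot e_i$ then forces $b_i=0$ for all $i$ together with $a_{x_1}=0$; comparing $e_i\cdot x_2$ with $x_2\cdot e_i$ forces $a_{x_2}=0$; and comparing $x_1\cdot x_2$ with $x_2\cdot x_1$ yields the single relation $2b_{x_1}=(2n+1)b_{x_2}$. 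At this point the only surviving products are $x_1\cdot x_1=(2n+1)b_{x_1}e_{2n}$, $x_1\cdot x_2=2b_{x_1}e_{2n}$ and $x_2\cdot x_2=2b_{x_2}e_{2n}$, all lying in $\mathbb{C}e_{2n}$.

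The associativity check then produces no new constraints, and this is the one conceptual rather than computational point: since $a_{e_{2n}}=b_{e_{2n}}=0$, the vector $e_{2n}$ annihilates the whole algebra under $\cdot$, while every product already lies in $\mathbb{C}e_{2n}$; hence every triple product vanishes regardless of bracketing and associativity holds automatically. Thus commutativity alone leaves a one-parameter family governed by $b_{x_1}$ (with $b_{x_2}=2b_{x_1}/(2n+1)$), the value $b_{x_1}=0$ giving the trivial structure.

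Finally I would normalise the nonzero case by a diagonal Lie automorphism. The map $\phi(e_1)=ce_1$, $\phi(e_i)=c^{\,i-1}e_i$ for $2\leq i\leq 2n$, $\phi(x_1)=x_1$, $\phi(x_2)=x_2$ respects every defining bracket of $\mathfrak{r}_{2n+2}$ (the chain relations $[e_i,e_1]=e_{i+1}$ fix the exponents, and then the antidiagonal relations $[e_i,e_{2n+1-i}]=(-1)^ie_{2n}$ and the weight relations $[e_i,x_k]$ are preserved), and it scales $e_{2n}\mapsto c^{2n-1}e_{2n}$, so that $b_{x_1}\mapsto c^{2n-1}b_{x_1}$. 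Choosing $c$ to be a $(2n-1)$-th root of $(2n+1)/b_{x_1}$ over $\mathbb{C}$ normalises $b_{x_1}=2n+1$, whence $b_{x_2}=2$ and the three products become exactly ${\bf TP}(\mathfrak{r}_{2n+2})$. I expect no genuine obstacle beyond bookkeeping: the real tasks are organising the commutativity comparisons systematically and verifying that $\phi$ preserves all relations, after which the structure and its precise coefficients are forced.
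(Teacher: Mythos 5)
Your proposal is correct and follows essentially the same route as the paper's proof: reduce via Lemma~\ref{lemma1} and Theorem~\ref{2halfderiv2} to the parameters $a_z,b_z$, use commutativity of $\cdot$ to force $a_z=0$ for all generators, $b_i=0$ for $1\leq i\leq 2n$, and $2b_{x_1}=(2n+1)b_{x_2}$, and then normalize the one surviving parameter by a scaling Lie automorphism fixing $x_1$ and $x_2$. Your two deviations --- the explicit observation that associativity is automatic because every product lies in $\mathbb{C}e_{2n}$ while $e_{2n}$ annihilates $(\mathfrak{r}_{2n+2},\cdot)$ (a check the paper passes over silently), and the graded automorphism $\phi(e_i)=c^{i-1}e_i$ in place of the paper's $\phi(e_1)=e_1$, $\phi(e_i)=\sqrt{\alpha^{-1}}e_i$, $\phi(e_{2n})=\alpha^{-1}e_{2n}$ --- are only minor presentational differences.
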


\begin{proof} Let $(\mathfrak{r}_{2n+2}, \cdot, [-,-])$ be a transposed Poisson algebra structure defined on the Lie algebra $\mathfrak{r}_{2n+2}$. Then according to Lemma \ref{lemma1} for each element of $x \in \mathfrak{r}_{2n+2}$ the linear operator $\varphi_x(y) = x \cdot y$ is a $\frac12$-derivation. Hence, by using Theorem \ref{2halfderiv2} for all $1 \leq i, j \leq n$ we can put
  $$\begin{array}{lll}
\varphi_{e_i}(e_j)=a_{i}e_j, & \varphi_{e_i}(x_1)=(2n+1)b_ie_{2n}+a_{i}x_1, & \varphi_{e_i}(x_2)=2b_ie_{2n}+a_{i}x_2, \\[1mm]
\varphi_{x_1}(e_j)=a_{x_1}e_j, & \varphi_{x_1}(x_1)=(2n+1)b_{x_1}e_{2n}+a_{x_1}x_1, & \varphi_{x_1}(x_2)=2b_{x_1}e_{2n}+a_{x_1}x_2, \\[1mm]
\varphi_{x_2}(e_j)=a_{x_2}e_j, & \varphi_{x_2}(x_1)=(2n+1)b_{x_2}e_{2n}+a_{x_2}x_1, & \varphi_{x_2}(x_2)=2b_{x_2}e_{2n}+a_{x_2}x_2. \\[1mm]
\end{array}$$

By checking the equalities of $\varphi_{e_i}(e_j)=e_i\cdot e_j=e_j\cdot e_i=\varphi_{e_j}(e_i), \ \varphi_{e_i}(x_k)=e_i\cdot x_k=x_k\cdot e_i=\varphi_{x_k}(e_i)$ and $\varphi_{x_1}(x_2)=x_1\cdot x_2=x_2\cdot x_1=\varphi_{x_2}(x_1)$ for all $i, j\in\{1,2,\ldots, 2n\}$ and $k\in\{1,2\},$ we have the following restrictions:
$$a_{t}=0, \ a_{x_1}=0,\ a_{x_2}=0, \ b_{t}=0, \ 1\leq t \leq 2n, \ 2b_{x_1}=(2n+1)b_{x_2}.$$

Thus, we obtain
$$x_1\cdot x_1=(2n+1)^2\alpha e_{2n}, \ x_1\cdot x_2=2(2n+1)\alpha e_{2n}, \ x_2\cdot x_2=4\alpha e_{2n}.$$

We have non-trivial transposed Poisson algebra structures only in the case when $\alpha\neq0$. Further, by using the transformation
$$\phi(x)=x, \ \phi(e_1)=e_1, \ \phi(e_i)=\sqrt{\alpha^{-1}}e_i, \ 2\leq i\leq 2n-1, \ \phi(e_{2n})=\alpha^{-1}e_{2n},$$
we get the algebra ${\bf TP}(\mathfrak{r}_{2n+2})$. \end{proof}

{\small\bibliography{cimart}}

\EditInfo{January 10, 2024}{July 29, 2024}{Ana Cristina Moreira Freitas, Carlos Florentino, Diogo Oliveira e Silva and Ivan Kaygorodov}

\end{document}